


\documentclass[11pt,reqno,a4paper]{amsart}
\usepackage[toc,page]{appendix}


\usepackage[utf8]{inputenc}
\usepackage{esint}
\usepackage{MnSymbol}
\usepackage{enumitem}
\usepackage[english]{babel}
\usepackage{amsmath}
\usepackage{mathtools}
\usepackage{thm-restate}
\usepackage{comment}
\usepackage{tikz}
\usetikzlibrary{patterns}

\usepackage{xcolor}



\newtheorem{theorem}{Theorem}
\newtheorem*{theorem*}{Theorem}
\newtheorem{proposition}{Proposition}[section]
\newtheorem{lemma}[proposition]{Lemma}

\theoremstyle{definition}
\newtheorem{definition}[proposition]{Definition}
\newtheorem{remark}[proposition]{Remark}

\numberwithin{equation}{section}


\usepackage[
colorlinks,
pdfpagelabels,
pdfstartview = FitH,
bookmarksopen = true,
bookmarksnumbered = true,
linkcolor = blue,
plainpages = false,
hypertexnames = false,
citecolor = red] {hyperref}

\hypersetup{
linktoc=page
}


\usepackage[capitalize]{cleveref}
\crefname{enumi}{}{}
\crefname{equation}{}{}

%

\setcounter{tocdepth}{2}
\makeatletter
\def\@tocline#1#2#3#4#5#6#7{\relax
\ifnum #1>\c@tocdepth 
\else
\par \addpenalty\@secpenalty\addvspace{#2}%
\begingroup \hyphenpenalty\@M
\@ifempty{#4}{%
\@tempdima\csname r@tocindent\number#1\endcsname\relax
}{%
\@tempdima#4\relax
}%
\parindent\z@ \leftskip#3\relax \advance\leftskip\@tempdima\relax
\rightskip\@pnumwidth plus4em \parfillskip-\@pnumwidth
#5\leavevmode\hskip-\@tempdima
\ifcase #1
\or\or \hskip 1em \or \hskip 2em \else \hskip 3em \fi%
#6\nobreak\relax
\dotfill\hbox to\@pnumwidth{\@tocpagenum{#7}}\par
\nobreak
\endgroup
\fi}
\makeatother


\def \R {\mathbb {R}}

\def \E {\mathbb{E}}
\def \P {\mathbb{P}}

\def\supp{\operatorname{supp}}
\def\osc{\operatorname{osc}}

\renewcommand{\tilde}{\widetilde}




\newcommand{\I}{\mathbb{I}}
\newcommand{\Ha}{\mathcal{H}}
\newcommand{\eps}{\varepsilon}

\newcommand{\pa}{\partial}

\renewcommand{\fint}{\strokedint}

\usepackage{etoolbox}

\newcommand{\capacity}[2]{
\ifstrequal{#1}{(}{\operatorname{cap}#1#2}{\operatorname{cap}(#1,#2)}
}


\allowdisplaybreaks


\setcounter{tocdepth}{1}
\begin{document}

\title[Geometric Characterization of the Eyring-Kramers Formula]{Geometric Characterization of the Eyring-Kramers Formula}
\author[Avelin]{Benny Avelin}
\address{Benny Avelin,
Department of Mathematics,
Uppsala University,
S-751 06 Uppsala,
Sweden}
\email{\color{blue} benny.avelin@math.uu.se}
\author[Julin]{Vesa Julin}
\address{Vesa Julin,
Department of Mathematics and Statistics,
University of Jyv\"askyl\"a,
P.O. Box 35,
40014 Jyv\"askyl\"a,
Finland}
\email{\color{blue} vesa.julin@jyu.fi}
\author[Viitasaari]{Lauri Viitasaari}
\address{Lauri Viitasaari,
Department of Mathematics,
Uppsala University,
S-751 06 Uppsala,
Sweden}
\email{\color{blue} lauri.viitasaari@math.uu.se}

\keywords{}

\subjclass{}
\date{\today}

\begin{abstract}
  In this paper we consider the mean transition time of an over-damped Brownian particle between local minima of a smooth potential. When the minima and saddles are non-degenerate this is in the low noise regime exactly characterized by the so called Eyring-Kramers law and gives the mean transition time as a quantity depending on the curvature of the minima and the saddle. In this paper we find an extension of the Eyring-Kramers law giving an upper bound on the mean transition time when both the minima/saddles are degenerate (flat) while at the same time covering multiple saddles at the same height. Our main contribution is a new sharp characterization of the capacity of two local minimas as a ratio of two geometric quantities, i.e.,  the smallest separating surface and  the geodesic distance.
\end{abstract}

\maketitle
\setcounter{tocdepth}{2}
\tableofcontents

\section{Introduction} 
In this paper we investigate the so called metastable exit times for the stochastic differential equation
\begin{align} \label{eq:SGD}
  dX_t = -\nabla F(X_t) dt + \frac{\sqrt{\varepsilon}}{2} dB_t,
\end{align}
where $F$ is a smooth potential with many local minimas and $\varepsilon$ is a small number.

The main question of metastability is to determine how long time does the process \cref{eq:SGD} take from going from one local minima to another one. We call these the metastable exit times. This question has a rich history and in the double well case with non-degenerate minimas and a saddle point this is characterized by a formula called Eyring-Kramers law \cite{Eyr,Kra} which can be stated as follows: Assume that $x$ and $y$ are quadratic local minimas of $F$, separated by a unique saddle $z$ which is such that the Hessian has a single negative eigenvalue $\lambda_1(z)$. Then the expected transition time from $x$ to $y$ satisfies
\begin{align} \label{e.kramer}
  \E^x[\tau] \simeq \frac{2\pi}{|\lambda_1(z)|} \sqrt{\frac{\det(\nabla^2 F(z))}{\det(\nabla^2 F(x))}} e^{(F(z)-F(x))/\eps},
\end{align}
where $\simeq$ denotes that the comparison constant tends to $1$ as $\eps \to 0$.

The validity of the above formula has been studied, from a qualitative perspective, quite extensively, starting from the work of Freidlin and Wentzell. For more information, see the book \cite{FW}. Roughly 15 years ago, Bovier et. al. produced a series of papers \cite{BGK01,BGK02,BGK,BGK2} (see also \cite{BGKBook}) which provided the first proof of \cref{e.kramer} in the general setting of Morse functions. Specifically, they showed that the comparison function is like $1+\mathcal{O}(\eps^{1/2}|\log\eps|^{3/2})$. In these papers, they utilized the connection to classical potential theory in order to reduce the problem of estimating metastable exit times to the problem of estimating certain capacities sharply. This approach was later used in \cite{Berglund} to generalize \cref{e.kramer} to general polynomial type of degeneracies.

In this paper we are interested in  estimating the metastable exit times in the case of general type of degenerate critical points. This requires new techniques and effective notation from geometric function theory which we will describe below. Our motivation comes from the field of non-convex optimization where we cannot expect the minimas/saddles to be quadratic or even to have polynomial growth in any direction. In particular, such situations are well known in the context of neural networks, where the minimas and saddles may be completely flat in some directions \cite{HS}. Furthermore, it seems that they are preferrable, see \cite{PKA} for a discussion, see also \cite{AJ} for an explicit example.

The main goal is to estimate the dependency of the metastable exit times with respect to the geometry of the potential $F$. In the proof of \cref{e.kramer} in \cite{BGK} this is reduced to estimating the ratio of the $L^1$ norm of the hitting probability and the capacity. Thus in order to estimate the metastable exit times, one needs to produce
\begin{enumerate}
  \item \label{en:estimate1} Estimates of the integral of the hitting probability, i.e. the integral of capacitary potentials with respect to the Gibbs measure.
  \item \label{en:estimate2} Estimates of the capacity itself, i.e. estimates of the energy of the capacitary potentials.
\end{enumerate}
The interesting point is that the influence of $F$ on \cref{en:estimate1,en:estimate2} is in a sense dual. Specifically, the shape of minimas of $F$ influence \cref{en:estimate1} while the shape of saddles between minimas influence \cref{en:estimate2}. As is well known, the main difficulty is to estimate \cref{en:estimate2}, which is  an interesting topic of its own.

Our main contribution is a sharp capacity estimate for a very general class of degenerate saddle points. In order to achieve this, we phrase the problem in the language of geometric function theory, where the capacity estimates are  central topic \cite{G, Z}. We introduce two geometric quantities which allow us to estimate the capacity in a sharp and natural way. As a byproduct, we see that in the case of several saddle points at the same height, the topology dictates how the local capacities add up. Specifically, we consider two cases, which we call the parallel and the serial case, and it turns out that the formulas for the total capacity have natural counterparts in electrical networks of capacitors, see \cref{thm1}. Even in the context of non-degenerate saddles, our formulas provide a generalization of the result of \cite{BGK} where the authors consider only the parallel case.  As we mentioned, we allow the saddle points to be degenerate but we have to assume that saddles are non-branching, see \cref{eq:struc3}.

\subsection{Assumptions and statement of the main results} \label{sec:ass}

In order to state our main results we first need to introduce our assumptions on the potential $F$. We also need to introduce  notation from geometric function theory
which might seem rather heavy at first, but it turns out to be robust enough for us to treat the potentials with possible degenerate critical points.

Let us first introduce some general terminology. We say that a critical point $z$ of a function $f \in C^1(\R^n)$ is a local minimum (maximum) of $f$ if $f(x) \geq f(z)$  ($f(x) \leq f(z)$) in a neighborhood of $z$. If $f$ is not locally constant at a critical point $z$, then $z$ is a saddle point if it is not a local minimum / maximum.
For technical reasons we also allow saddle points to include points $z$ where $f$ is locally constant. We say that  a local minimum at  $z$  is \emph{proper} if there exists a $\hat \delta > 0$ such that for every  $0 < \delta < \hat \delta$ there exists a $\rho$ such that
\begin{align*}
  f(x) \geq f(z) + \delta \quad \text{for all } \, x \in \pa B_{\rho}(z),
\end{align*}
where $B_\rho(z)$ denotes open ball with radius $\rho$ centered at $z$. When the center is at the origin we use the short notation $B_\rho$.

Let us then proceed to our assumptions on the potential $F$.  Throughout the paper we assume  that $F \in C^2(\R^n)$ and satisfies the following quadratic growth  condition
\begin{equation} \label{eq:struc0}
  F(x) \geq \frac{ |x|^2}{C_0}  - C_0
\end{equation}
for a  constant $C_0 \geq 1$. We assume that every local minimum point $z$ of $F$ is proper, as described above, and that there is a convex function $G_z: \R^n \to \R$ which has a proper minimum at $0$ with $G(0)= 0$ such that
\begin{equation} \label{eq:struc1}
  \big| F(x+z)- F(z)- G_z(x) \big|\leq \omega \big( G_z(x)\big),
\end{equation}
where $\omega : [0,\infty) \to [0,\infty)$ is a continuous and increasing function  with
\begin{equation} \label{eq:struc11}
  \lim_{s \to 0} \frac{\omega(s)}{s} = 0.
\end{equation}
We denote by  $\delta_0$ the largest number  for which $\omega(\delta) \leq \frac{\delta}{8}$ for all $\delta \leq 4 \delta_0$. We define a neighborhood of the local minimum point $z$ and $\delta < \delta_0$ as
\begin{equation*}
  O_{z,\delta} :=   \{ x \in \R^n:   G_z(x) <  \delta\} +\{ z\}.
\end{equation*}

For the saddles, we assume that for every saddle point $z$  of $F$ there are convex functions $g_z: \R \to \R$ and $G_z :\R^{n-1} \to \R$ which have proper minimum at $0$ with $g_z(0) = G_z(0) = 0$, and that there exists an isometry $T_z : \R^n \to \R^n$  such that, denoting $x = (x_1, x') \in \R \times \R^{n-1}$, it holds
\begin{equation} \label{eq:struc3}
  \big| (F\circ T_z) (x) -F(z)  + g_z(x_1) - G_z(x'))\big|\leq \omega( g_z(x_1))  + \omega( G_z(x')),
\end{equation}
where $\omega : [0,\infty) \to [0,\infty)$ is as in \cref{eq:struc11}. The assumption \cref{eq:struc3} allows the saddle point to be degenerate, but we do not allow them to have many branches, i.e., the sets $\{F < F(z)\}\cap B_{\rho}(z)$ cannot have more than two components.  Note that the convex functions $g_z, G_z$ and the isometry $T_z$ depend on $z$, while the function $\omega$ is the same for all saddle points.   We define a neighborhood of the saddle point $z$ and $\delta < \delta_0$ as
\begin{equation} \label{eq:struc4}
  O_{z,\delta} := T_z^{-1}\left(  \{x_1 \in \R:  g_z(x_1) < \delta\} \times \{x' \in \R^{n-1}:  G_z(x') < \delta\}\right),
\end{equation}
where $T_z$ is the isometry in \cref{eq:struc1}. Note that, since the saddle may be flat, we should talk about sets rather than points. However, we adopt the convention that we always  choose a representative point from each saddle (set) and thus we may label the saddles by points $z_1, z_2, \dots$. Moreover, we assume that there is a $\delta_1 \leq \delta_0$ such that for $\delta < \delta_1$ we have that if $z_1$ and $z_2$ are two different saddle points, then their neighborhoods $O_{z_1, 3\delta}$ and   $O_{z_2, 3\delta}$ defined in \cref{eq:struc4} are disjoint.  We assume the same for local minimas (or more precisely, the representative points of sets of local minimas).

Let us then introduce the notation related to  geometric function theory, see \cite{G,Z}.  Let us  fix two disjoint sets $A$ and $B$ in a domain $\Omega$ (open and connected set). We say that  a smooth path $\gamma :[0,1] \to \R^n$ \emph{connects $A$ and $B$ in the domain $\Omega$}  if
\begin{align*}
  \gamma(0) \in A, \quad \gamma(1) \in B \quad \text{and} \quad \gamma([0,1]) \subset  \Omega.
\end{align*}
In this case we denote $\gamma \in \mathcal{C}(A,B; \Omega)$. We follow the standard notation from geometric function theory and define a dual object to this by saying that a smooth hypersurface $S \subset \R^n$ (possibly with boundary)  \emph{separates $A$ from $B$ in  $\Omega$} if every path
$\gamma \in \mathcal{C}(A,B; \Omega)$  intersects $S$. In this case we denote  $S \in \mathcal{S}(A,B;\Omega)$.  We define the geodesic distance between $A$ and $B$ in $\Omega$ as
\begin{equation} \label{eq:struc5}
  d_{\eps}(A,B; \Omega) := \inf\left( \int_{\gamma} |\gamma'| e^{\frac{F(\gamma)}{\eps}} \, dt  : \gamma \in \mathcal{C}(A,B; \Omega)  \right)
\end{equation}
and   its dual  by
\begin{equation} \label{eq:struc6}
  V_{\eps}(A,B; \Omega)   := \inf\left( \int_{S}  e^{-\frac{F(x)}{\eps}} \, d \mathcal{H}^{n-1}(x) :S \in  \mathcal{S}(A,B;\Omega) \right)  .
\end{equation}
Here $\mathcal{H}^{k}$ denotes the $k$-dimensional Hausdorff measure. Finally, we define  the communication height between the sets $A$ and $B$ as
\begin{equation*} 
  F(A; B) := \inf_{\gamma \in \mathcal{C}(A,B; \R^n)} \sup_{t \in [0,1]}  \,  F(\gamma(t)).
\end{equation*}

Let us then assume that $x_a$ and $x_b$ are  local minimum points   and  denote the height of the saddle between $x_a$ and $x_b$ as
\begin{align*}
  F(x_a; x_b) :=  F(\{x_a\}; \{x_b\}).
\end{align*}
Notice that  $F(x_a), F(x_b) \leq F(x_a; x_b)$. For $s \in \R$, denote
\begin{align*}
  U_s : = \{ x \in \R^n : F(x) <  F(x_a; x_b)  + s\}.
\end{align*}
We note that the points $x_a$ and $x_b$ lie in different components of the set $U_{-\delta/3}$ while they are in the same component of the set  $U_{\delta/3}$. We  will always denote the   components of $U_{-\delta/3}$ containing the points $x_a$ and $x_b$ by $U_a$ and $U_b$, respectively.   It is important to notice that
if $z$ is a saddle point and  $F(z) < F(x_a; x_b) + \delta/3$, then the  neighborhood  $O_{z,\delta}$ defined in  \cref{eq:struc4} intersects the set  $U_{-\delta/3}$.
We will sometimes call the components of the set $U_{-\delta/3}$ \emph{islands} and the neighborhoods  $O_{z,\delta}$ \emph{bridges} since we may connect islands with bridges, see \cref{f:localization}. (The terminology is obviously taken from Seven Bridges of K\"onigsberg).
We say that the set of saddle  points $Z_{x_a,x_b}= \{z_1, \dots, z_N\}$  \emph{charge capacity} if it is the smallest set with the property that  every  $\gamma \in \mathcal{C}(B_\eps(x_a),B_\eps(x_a); U_{\delta/3})$  intersects the bridge  $O_{z_i,\delta}$, defined in  \cref{eq:struc4}, for some $z_i \in Z_{x_a,x_b}$. In particular, it holds that $Z_{x_a,x_b} \subset U_{\delta/3}$.

\begin{figure}
  \begin{center}
    \begin{tikzpicture}

      \draw (-2,-2) .. controls (0,-1) and (0,1) .. (-2,2);
      \draw (-2,1) node {$U_b$};
      \draw (2,-2) .. controls (0,-1) and (0,1) .. (2,2);
      \draw (2,1) node {$U_a$};

      \filldraw (0,0) circle (0.1);
      \draw (0.25,-0.2) node {$z$};

      \filldraw[color=gray!60, fill=gray!30] (-2,0) circle (0.5);
      \draw (-2,0) node {$x_b$};
      \draw (-2,-0.8) node {$B_\varepsilon(x_b)$};
      \filldraw[color=gray!60, fill=gray!30] (2,0) circle (0.5);
      \draw (2,0) node {$x_a$};
      \draw (2,-0.8) node {$B_\varepsilon(x_a)$};

      \draw (-0.8,-1.3) rectangle (0.8,1.3);
      \draw (0,-1.7) node {$O_{z, \delta}$};
    \end{tikzpicture}
  \end{center}
  \caption{The neighborhood $O_{z, \delta}$ of the saddle point $z$ connects the sets $U_a$ and $U_b$. }
  \label{f:localization}
\end{figure}
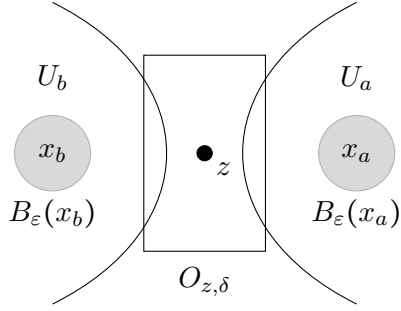

We will focus on two different topological situations, where the saddle points in $Z_{x_a,x_b} $ are either \emph{parallel} or in \emph{series}.  We say that the
points in $Z_{x_a,x_b}$ are parallel if for every $z_i \in Z_{x_a,x_b}$ there is a path
\begin{align*}
  \gamma \in \mathcal{C}(B_\eps(x_a),B_\eps(x_b); U_{\delta/3})
\end{align*}
passing only through $z_i$.   We say that the  points in $Z_{x_a,x_b}$ are in series if every path  $\gamma \in \mathcal{C}(B_\eps(x_a),B_\eps(x_b); U_{\delta/3})$ passes through   the bridge  $O_{z_i,\delta}$, defined in \cref{eq:struc4},  for all $z_i \in Z_{x_a,x_b}$. In other words, if the points in $Z_{x_a,x_b}= \{z_1, \dots, z_N\}$ are parallel, then the islands occupied by  the points $x_a$ and $x_b$ respectively  are  connected with $N$ bridges and we need to pass only one to get from $x_a$ to $x_b$. If they are in series, then we have to pass all $N$ bridges  in order to get from $x_a$ to $x_b$, see \cref{f:par:ser}.

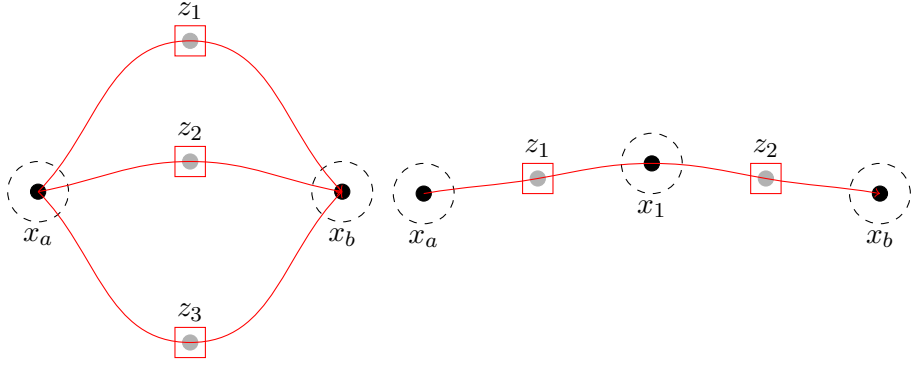
\begin{figure}
  \begin{center}
    \begin{minipage}{6in}
      \raisebox{-0.45\height}{
      \begin{tikzpicture}[scale=2]

        \filldraw (-1,0) circle (0.05);
        \draw[dashed] (-1,0) circle (0.2);
        \draw (-1,-0.3) node {$x_a$};
        \filldraw (1,0) circle (0.05);
        \draw[dashed] (1,0) circle (0.2);
        \draw (1,-0.3) node {$x_b$};

        \filldraw[color=gray!60] (0,1) circle (0.05);
        \draw (0,1.2) node {$z_1$};
        \filldraw[color=gray!60] (0,0.2) circle (0.05);
        \draw (0,0.4) node {$z_2$};
        \filldraw[color=gray!60] (0,-1) circle (0.05);
        \draw (0,-1+0.2) node {$z_3$};

        \draw[->,color=red] (-1,0) to[out=45,in=-180] (0,1) to[out=0,in=-180-45] (1,0);
        \draw[->,color=red] (-1,0) to[out=10,in=-180] (0,0.2) to[out=0,in=-180-10] (1,0);
        \draw[->,color=red] (-1,0) to[out=-45,in=-180] (0,-1) to[out=0,in=-180+45] (1,0);

        \draw[red,rotate around={0:(0,0)}] (-0.1,1+0.1) rectangle (0.1,1-0.1);
        \draw[red,rotate around={0:(0,0)}] (-0.1,0.2+0.1) rectangle (0.1,0.2-0.1);
        \draw[red,rotate around={0:(0,0)}] (-0.1,-1+0.1) rectangle (0.1,-1-0.1);

      \end{tikzpicture}}
      \raisebox{-0.5\height}{
      \begin{tikzpicture}[scale=2]

        \filldraw (-1,0) circle (0.05);
        \draw[dashed] (-1,0) circle (0.2);
        \draw (-1,-0.3) node {$x_a$};
        \filldraw (0.5,0.2) circle (0.05);
        \draw[dashed] (0.5,0.2) circle (0.2);
        \draw (0.5,0.2-0.3) node {$x_1$};
        \filldraw (2,0) circle (0.05);
        \draw[dashed] (2,0) circle (0.2);
        \draw (2,-0.3) node {$x_b$};

        \filldraw[color=gray!60] (-0.25,0.1) circle (0.05);
        \draw (-0.25,0.1+0.2) node {$z_1$};
        \filldraw[color=gray!60] (0.5+0.75,0.1) circle (0.05);
        \draw (0.5+0.75,0.1+0.2) node {$z_2$};
        \draw[->,color=red] (-1,0) to[out=10,in=-180+10] (-0.25,0.1) to[out=10,in=-180] (0.5,0.2) to[out=0,in=-180-10] (0.5+0.75,0.1) to[out=-10,in=-180-10] (2,0);

        \draw[red,rotate around={0:(0,0)}] (-0.25-0.1,0.1-0.1) rectangle (-0.25+0.1,0.1+0.1);
        \draw[red,rotate around={0:(0,0)}] (0.5+0.75-0.1,0.1-0.1) rectangle (0.5+0.75+0.1,0.1+0.1);
      \end{tikzpicture}}
    \end{minipage}
  \end{center}
  \caption{Left picture is the parallel case and the right is the series case.}
  \label{f:par:ser}
\end{figure}

Recall that  $U_a$ and $U_b$ denote the islands, i.e.,  the components of $U_{-\delta/3}$, which contain the points $x_a$ and $x_b$.
If the points in $Z_{x_a,x_b} = \{ z_1, \dots, z_N\}$ are parallel, then we may connect $U_a$ and $U_b$ with one bridge, i.e., for every $z_i$ the set
\begin{equation} \label{eq:struc7}
  U_{z_i,\delta} :=   O_{z_i, \delta} \cup U_a \cup U_b
\end{equation}
is connected, again see \cref{f:localization}. Then  all paths $\gamma \in \mathcal{C}(B_\eps(x_a),B_\eps(x_b); U_{z_i,\delta})$ pass through the bridge  $O_{z_i,\delta}$. If the points in  $Z_{x_a,x_b}$ are in series, then it is useful to order them $Z_{x_a,x_b} = \{ z_1, \dots, z_N\}$ as follows. Let us consider a path $\gamma  \in \mathcal{C}(B_\eps(x_a),B_\eps(x_b); U_{\delta/3})$ which  passes through each point in $Z_{x_a,x_b}$  precisely once. This means that there are
\begin{equation} \label{eq:struc8}
  0 < t_1 < \dots < t_N <1 \quad \text{ such that } \quad \gamma(t_i) = z_i,
\end{equation}
which gives a natural ordering for  points in  $Z_{x_a,x_b}$. By the assumption \cref{eq:struc3}, we also deduce that there are $s_1, \dots,  s_{N-1}$ such that $t_i < s_i < t_{i+1}$ and  $F(\gamma(s_i)) < F(x_a; x_b)  - \delta/3$. We denote
\begin{equation} \label{eq:struc9}
  \gamma(s_i) = x_i, \,\, x_0 = x_a \quad \text{ and  } \quad  x_{N} = x_b.
\end{equation}
The idea  is that then every point  $x_i$ lie in a different island, i.e., component of $U_{-\delta/3}$, see \cref{f:par:ser}.

We are now ready to state our main results. The first result is a quantitative lower bound on the capacity between the sets $B_\eps(x_a)$ and $B_\eps(x_b)$, where $x_a$ and $x_b$ are two local minimum points of $F$. For a given domain $\Omega \subset \R^n$ we define the capacity of two disjoint  sets $A, B \subset \Omega$ with respect to the domain $\Omega$ as

\begin{equation*}
  \capacity(A,B; \Omega) := \inf \left( \eps \int_{\Omega } |\nabla u|^2 e^{-\frac{F}{\eps}}\, dx \, : \,\, u=1 \,\, \text{in } \, A,  \,\, u=0 \,\, \text{in } \, B \right).
\end{equation*}
Above, the infimum is taken over functions $u \in W_{loc}^{1,2}(\Omega)$.  In the case $\Omega = \R^n$ we denote
\begin{align*}
  \capacity(A,B) = \capacity(A,B; \R^n)
\end{align*}
for short.

Finally, for functions $f$ and $g$ which depend  continuously on $\eps>0$, we adopt the notation
\begin{align*}
  f(\eps) \simeq g(\eps)
\end{align*}
when there exists a constant $C$ depending only on the data of the problem such that
\begin{align*}
  (1-\hat \eta(C \eps)) f(\eps) \leq g(\eps) \leq (1+\hat \eta(C \eps)) f(\eps),
\end{align*}
where $\hat \eta$ is an increasing and continuous function $\hat \eta:[0,\infty) \to [0,\infty)$ with $\lim_{s \to 0} \hat \eta(s) = 0$. In all our estimates, the function $\hat \eta$ is specified and depends only on the function $\omega$ from \cref{eq:struc1,eq:struc3}.
In order to define it, we first let $0 < \eps \leq \delta_0/2$ be fixed and let $\eps_1(\eps)$ be the unique solution to
\begin{equation} \label{lem:geometric-1}
  \sqrt{\omega(\eps_1) \eps_1} = \eps.
\end{equation}
From the assumption that $\omega(s) < s/2$ for $s < \delta_0$ we see that $\eps < \eps_1$. Furthermore, since $\omega$ is increasing we get that $\eps_1 \to 0$ as $\eps \to 0$. Now, from the definition of $\eps_1$ in \cref{lem:geometric-1} we see, using $\lim_{s \to 0} \frac{\omega(s)}{s} = 0$ and $\eps_1 \to 0$ as $\eps \to 0$, that
\begin{align*}
  \frac{\eps_1}{ \eps} = \frac{\sqrt{\eps_1}}{\sqrt{\omega(\eps_1)}} \to \infty \qquad \text{as }\, \eps \to 0.
\end{align*}
On the other hand, again using the same facts, we see that
\begin{align*}
  \frac{\omega(\eps_1)}{ \eps} = \frac{\sqrt{\omega(\eps_1)}}{\sqrt{\eps_1}} \to 0\qquad \text{as }\, \eps \to 0.
\end{align*}
Thus
\begin{equation} \label{lem:geometric-2}
  \lim_{\eps \to 0} \frac{\eps_1}{ \eps} = \infty \quad \text{and} \quad \lim_{\eps \to 0} \frac{\omega(\eps_1)}{ \eps} = 0 .
\end{equation}
In the following we will denote
\begin{align}
  \label{e.eta} \eta(x) &= e^{-1/x} x^n \\
  \label{e.etahat} \hat \eta(C \eps) &= \eta\left (C \frac{\eps_1(\eps)}{\eps} \right ).
\end{align}

Finally, in our main theorems and our lemmas/propositions beyond \cref{sec:technical} there is a ball $B_R$ which contains all the level sets of interest. The existence of such a ball is given by the quadratic growth condition \cref{eq:struc0}. The constants in the estimates in our main theorems and in \cref{sec:technical} are unless otherwise stated, depending on $n,\|\nabla F\|_{B_R}, \delta, R, C_0$, specifically, this applies to the constants in $\hat \eta$, and as such, gives precise meaning to $a \simeq b$.

\begin{theorem}\label{thm1}
  Assume that $F$ satisfies the structural assumptions above. Let $x_a$ and $x_b$ be two local minimum points of $F$ and let  $Z_{x_a,x_b}= \{z_1, \dots, z_N\}$ be the set of saddle points which charge capacity as defined above, and let $0 < \delta \leq \delta_1$ be fixed. There exists an $0 < \eps_0 \leq \delta$ such that if $0 < \eps \leq \eps_0$ the following holds:

  If the points in  $Z_{x_a,x_b}= \{z_1, \dots, z_N\}$ are parallel, then, using the notation $U_{z_i,\delta}$  from  \cref{eq:struc7}, it holds
  \begin{equation} \label{eq:thm1-1}
    \capacity(B_\eps(x_a), B_\eps(x_b)) \simeq \sum_{i=1}^N \capacity(B_\eps(x_a), B_\eps(x_b); U_{z_i,\delta})  .
  \end{equation}
  Moreover  for all $i =1,\dots, N$ we have the estimate
  \begin{align*}
    \capacity(B_\eps(x_a), B_\eps(x_b); U_{z_i,\delta}) \simeq   \eps \frac{V_{\eps}(B_\eps(x_a),B_\eps(x_b); U_{z_i,\delta}) }{d_{\eps}(B_\eps(x_a),B_\eps(x_b); U_{z_i,\delta}) } e^{\frac{F(z_i)}{\eps}},
  \end{align*}
  where $d_{\eps}(B_\eps(x_a),B_\eps(x_b); U_{z_i,\delta}) $ and $V_{\eps}(B_\eps(x_a),B_\eps(x_b); U_{z_i,\delta})$ are defined in \cref{eq:struc5} and \cref{eq:struc6}.

  If  the points in  $Z_{x_a,x_b}$ are in series, then, using the ordering  $z_1, \dots, z_N$ from \cref{eq:struc8} for the points in $Z_{x_a,x_b}$ and the points $x_0, x_1, \dots, x_{N}$ defined in \cref{eq:struc9}, it holds
  \begin{equation} \label{eq:thm1-2}
    \frac{1}{\capacity(B_\eps(x_a), B_\eps(x_b))} \simeq  \sum_{i=1}^N \frac{1}{\capacity(B_\eps(x_{i-1}), B_\eps(x_{i}))},
  \end{equation}
  where we have the estimate
  \begin{align*}
    \capacity(B_\eps(x_{i-1}), B_\eps(x_i)) \simeq  \eps \frac{V_{\eps}(B_\eps(x_{i-1}), B_\eps(x_i))}{d_{\eps}(B_\eps(x_{i-1}),B_\eps(x_{i})) } e^{\frac{F(z_i)}{\eps}}
  \end{align*}
  for all $i =1,\dots, N$.
\end{theorem}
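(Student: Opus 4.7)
The plan is to prove \cref{thm1} by combining two ingredients: a localization result that reduces the global capacity to contributions concentrated in the bridges $O_{z_i,\delta}$, and a sharp local capacity estimate near each saddle expressed through the geometric ratio $V_\eps/d_\eps$. Since $e^{-F/\eps}\,dx$ is exponentially concentrated on the islands and bridges (outside of these sets $F$ exceeds $F(z_i)+\delta/3$, giving an $e^{-\delta/(3\eps)}$ suppression), the capacitary potential is essentially locally constant on each island and all of the Dirichlet energy is produced inside the bridges. This intuition is what allows topology to dictate the combinatorics: bridges are circuits carrying current, islands are conductors at a fixed voltage.

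For the parallel case, I would first prove the upper bound in \cref{eq:thm1-1} by building a global test function from the local capacitary potentials $u_i$ of the problems on $U_{z_i,\delta}$: extend each $u_i$ by $1$ on $U_a$ and $0$ on $U_b$ and show that the interpolation error outside the bridges is of order $\hat\eta(C\eps)$ relative to the main term because of the exponential concentration. The matching lower bound is obtained by restricting the global capacitary potential $u$ to each $U_{z_i,\delta}$: each restriction is admissible for the local problem, and since the bridges $O_{z_i,3\delta}$ are disjoint by assumption, the energies do not interact to leading order. For the series case, I would order the saddles as in \cref{eq:struc8} and the intermediate minima $x_0,\dots,x_N$ as in \cref{eq:struc9}, and then prove \cref{eq:thm1-2} by comparing the global capacitary potential to a piecewise linear (in its values on each island) combination of the local capacitary potentials between consecutive islands. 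The Kirchhoff-type inverse-sum law follows from the fact that the voltage drop across the chain equals the sum of local voltage drops, since the potential is essentially constant on each intermediate island.

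The local sharp estimate at a single bridge is the heart of the matter. Using the coordinate representation \cref{eq:struc3}, $F \approx F(z) - g_z(x_1) + G_z(x')$ inside $O_{z,\delta}$, so $e^{-F/\eps}$ factorizes into a decaying factor in the unstable $x_1$-direction and a concentrating Gaussian-like factor in the stable $x'$-direction. For the upper bound I would take a test function that depends only on $x_1$ in these local coordinates, optimal along the $x_1$-axis, extended trivially elsewhere; a direct computation together with the error control \cref{eq:struc11} yields the ratio $\eps V_\eps/d_\eps \cdot e^{F(z)/\eps}$ up to a factor $1+\hat\eta(C\eps)$, where the minimizing surface and geodesic in the definitions \cref{eq:struc5,eq:struc6} are realized to leading order by the cross section $\{x_1=0\}$ and the $x_1$-axis. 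For the lower bound, I would apply the co-area formula
\[
\eps \int |\nabla u|^2 e^{-F/\eps}\,dx = \eps \int_0^1 \!\! \int_{\{u=t\}} |\nabla u| e^{-F/\eps}\,d\mathcal{H}^{n-1}\,dt,
\]
combine with Cauchy--Schwarz on each level set, and note that level sets of an admissible $u$ separate $B_\eps(x_a)$ from $B_\eps(x_b)$, so $V_\eps \le \int_{\{u=t\}} e^{-F/\eps}\,d\mathcal{H}^{n-1}$, while the weighted length along a perpendicular gradient flow lines is bounded below by $d_\eps$.

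The principal obstacle is the degeneracy of the saddles: unlike the Morse case, one cannot invoke classical Laplace asymptotics at $z$, and the precise calibration of the $V_\eps/d_\eps$ ratio with the $1+\hat\eta(C\eps)$ tolerance requires careful use of the convexity of $g_z$ and $G_z$ together with the window $\eps_1$ defined through \cref{lem:geometric-1,lem:geometric-2}. Concretely, one must show that the minimizing separating surface in \cref{eq:struc6} and the minimizing path in \cref{eq:struc5} are both effectively concentrated at scale $\eps_1$ around the saddle, where the quadratic-type behavior of $g_z,G_z$ is controlled up to the error $\omega(\eps_1) \ll \eps$. A secondary difficulty, particular to the series case, is to ensure that the capacitary potential is indeed nearly constant on each intermediate island $U_i$ — this needs a Poincar\'e-type estimate on $U_i$ with the weight $e^{-F/\eps}$, and it is in this step that the properness of the minima and the structural assumption \cref{eq:struc1} enter decisively.
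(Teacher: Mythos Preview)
Your overall architecture --- localize the Dirichlet energy to the bridges, show the capacitary potential is nearly constant on each island via a weighted Poincar\'e/Harnack estimate, build a one-dimensional test function in the $x_1$-coordinate for the upper bound, and then sum (parallel) or inverse-sum (series) --- matches the paper closely and is correct. The paper's Lemma~\ref{l:hab:levelset:rough} is exactly the island-constancy step you anticipate, and the series Kirchhoff law is obtained there by the same constrained optimization you describe.

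The one genuine gap is your proposed lower bound at a single bridge. After the co-area formula you have
\[
\eps\int_0^1\int_{\{u=t\}}|\nabla u|\,e^{-F/\eps}\,d\mathcal{H}^{n-1}\,dt,
\]
and the separation property of $\{u=t\}$ only gives $\int_{\{u=t\}}e^{-F/\eps}\ge V_\eps$ for the \emph{unweighted} surface integral, not the $|\nabla u|$-weighted one that actually appears. If you try to fix this with Cauchy--Schwarz on the level set, the residual factor is $\int_{\{u=t\}}|\nabla u|^{-1}e^{-F/\eps}$, whereas the gradient-flow-line bound you invoke controls $\int_\gamma e^{+F/\eps}\,ds\ge d_\eps$: the signs of the exponential weights do not match, and reconciling them would require Jacobian control on the foliation by flow lines of $\nabla u$. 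That control is not available for the capacitary potential $u$, whose flow lines are not the $x_1$-axis.

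The paper sidesteps this entirely by working directly in the local coordinates of \cref{eq:struc3}: it applies the fundamental theorem of calculus and Cauchy--Schwarz along each segment $\{x'=\text{const}\}$, then integrates over $x'$, obtaining the explicit ratio $\int_{\R^{n-1}}e^{-G_z/\eps}\,dx'\big/\int_{\R}e^{-g_z/\eps}\,dx_1$. The identification of this ratio with $V_\eps/d_\eps$ is then done \emph{separately} in Propositions~\ref{lem:geometric-d} and~\ref{lem:geometric-V}; the latter (showing that no separating surface can beat the coordinate cross-section) is nontrivial and uses a mollified density function together with the relative isoperimetric inequality, a step your outline does not cover. So the sharp geometric formulation via $V_\eps$ and $d_\eps$ is not obtained by a direct co-area/duality argument on the capacitary potential but by first reducing to the product structure and then characterizing $V_\eps$ and $d_\eps$ in those coordinates.
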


Let us make a few remarks on the statement of the above theorem. First, in the case of  a single saddle $Z_{x_a,x_b}= \{z\}$ the above capacity estimate reduces to
\begin{align*}
  \capacity{B_\eps(x_{a})}{B_\eps(x_b)} \simeq   \eps \frac{V_{\eps}(B_\eps(x_{a}),B_\eps(x_{b}))}{d_{\eps}(B_\eps(x_{a}),B_\eps(x_{b})) } e^{\frac{F(z)}{\eps}},
\end{align*}
where $d_{\eps}(B_\eps(x_{a}),B_\eps(x_{b}))$ is the geodesic distance between $B_\eps(x_{a})$ and $B_\eps(x_{a}),$ and $V_{\eps}(B_\eps(x_{a}),B_\eps(x_{b}))$ is the area of the 'smallest cross section'. This is in accordance with the classical result on parallel plate capacitors, where the capacity depends linearly on the area and is inversely proportional to their distance.

The statement \cref{eq:thm1-1}, when the saddle points are parallel,  means that each saddle point $z_1, \dots, z_N$ charges capacity and the total capacity is their sum. Again the situation is the same as in the case of parallel plate capacitors with capacity $C_1, \dots, C_N$, where the total capacity is the sum
\begin{align*}
  C =C_1 + \dots + C_N.
\end{align*}
On the other hand, if the plate capacitors are in series their total capacity  satisfies
\begin{align*}
  \frac{1}{C} = \frac{1}{C_1} + \dots + \frac{1}{C_N}
\end{align*}
which is precisely the statement in  \cref{eq:thm1-2}.

Using  the assumption \cref{eq:struc3} we  calculate in \cref{lem:geometric-d} and in \cref{lem:geometric-V}  more  explicit, but less geometric,  formulas  for the single saddle case in a domain $\Omega$. Namely, we have
\begin{align*}
  d_{\eps}(B_\eps(x_{a}),B_\eps(x_{b});\Omega) \simeq   e^{\frac{F(z)}{\eps}}\int_{\R} e^{-\frac{g_{z}(x_1)}{\eps}} \, d x_1
\end{align*}
and
\begin{align*}
  V_{\eps}(B_\eps(x_{a}),B_\eps(x_{b});\Omega)\simeq  e^{-\frac{F(z)}{\eps}} \int_{\R^{n-1}} e^{-\frac{G_{z}(x')}{\eps}} \, d x',
\end{align*}
and thus we recover the result in \cite{Berglund} .  In particular, if the saddle point is non-degenerate, i.e., $g_{z}$ and $G_{z}$ are second order polynomials   and the negative eigenvalue of $\nabla^2 F(z)$ is $-\lambda_1$, we may estimate
\begin{align*}
  d_{\eps}(B_\eps(x_{a}),B_\eps(x_{b})) \simeq \sqrt{\frac{2 \pi \eps}{\lambda_1}}\, e^{\frac{F(z)}{\eps}}
\end{align*}
and
\begin{align*}
  V_{\eps}(B_\eps(x_{a}), B_\eps(x_{b})) \simeq (2 \pi \eps)^{\frac{n-1}{2}} \frac{\sqrt{\lambda_1}\, e^{-\frac{F(z)}{\eps}}}{\sqrt{\det (\nabla^2 F(z))}}.
\end{align*}
In particular, we recover the classical formula \cref{e.kramer}.

Our second main theorem is an estimate on the so called metastable exit times. However, in order to state it we need some further definitions. Assume that the local minimas of $F$ are labelled $x_i$ and ordered such that $F(x_i) \leq F(x_j)$ if $i \leq j$. We will group the minimas at the same level using the sets $G_k$, $k=1,\ldots, K$, i.e. $x_i, x_j \in G_k$ if $F(x_i) = F(x_j)$, and $x \in G_i$ and $y \in G_j$, then $F(x) < F(y)$ for $i < j$. We also write $F(G_i) := F(x)$ with $x \in G_i$. Furthermore, we will denote $S_k = \bigcup_{i=1}^k G_i$ for $k=1,\ldots, K$.
We will also consider $G_k^\varepsilon = \bigcup_{x \in G_k} B_\varepsilon(x)$ and $S_k^\varepsilon = \bigcup_{i=1}^k G_i^\eps$.

In addition to the previous structural assumptions we assume further that for $\delta_2 \leq \delta_1$ small enough, it holds
\begin{equation} \label{eq:struc10}
  F(G_{k+1}) - F(G_k) \geq \delta_2
\end{equation}
for all $k=1,\ldots,K$. In our second theorem we give an upper bound on the exit time for the process defined by \cref{eq:SGD} to go from local minimum point in $G_{k+1}^\varepsilon $ to a lower one in $S_k$.

\begin{theorem}\label{thm2}
  Assume that $F$ satisfies the structural assumptions above, and let $\Omega$ be a domain that contains $S_{k+1}^\epsilon$. There exists an $0 < \eps_0 \leq \delta_2$ such that if $0 < \eps < \eps_0$, the following holds:

  For $x \in G_{k+1}^\varepsilon $ we have
  \begin{align*}
    \E^x[\tau_{S_k^\eps} \I_{\tau_{S_k^\eps} < \tau_{\Omega^c}}]
    \leq
    \frac{C e^{-F(G_{k+1})/\varepsilon} \sum_{x \in G_{k+1} } |O_{x,\varepsilon}|}{\max_{x \in G_k, y \in G_{k+1}^\varepsilon } \capacity(B_\varepsilon(x),B_\varepsilon(y);\Omega)}
    + C \varepsilon^{\alpha/2}.
  \end{align*}
  Let $x_a \in G_k$, $x_b \in G_{k+1} $ be a pair that maximizes the pairwise capacity. Then, with the notation of \cref{thm1}, we get in the parallel case
  \begin{align*}
    \E^x[\tau_{S_k^\eps} \I_{\tau_{S_k^\eps} < \tau_{\Omega^c}}]
    \leq
    \eps^{-1} \frac{C e^{-F(G_{k+1})/\varepsilon} \sum_{x \in G_{k+1} } |O_{x,\varepsilon}|}
    {\sum_{i=1}^N e^{-F(z_i)/\epsilon} \frac{\mathcal{H}^{n-1}(\{G_{z_i} < \epsilon\}) }{\mathcal{H}^{1}(\{g_{z_i} < \epsilon\})}}
    + C \varepsilon^{\alpha/2},
  \end{align*}
  and in the series case we get
  \begin{align*}
    \E^x[\tau_{S_k^\eps} \I_{\tau_{S_k^\eps} < \tau_{\Omega^c}}]
    \leq
    \frac{C}{\eps} \sum_{x \in G_{k+1} } \sum_{i=1}^N e^{(F(z_i)-F(G_{k+1}))/\varepsilon}
    \frac{|O_{x,\varepsilon}| \mathcal{H}^{1}(\{g_{z_i} < \epsilon\}) }{\mathcal{H}^{n-1}(\{G_{z_i} < \epsilon\})}
    + C \varepsilon^{\alpha/2}.
  \end{align*}
\end{theorem}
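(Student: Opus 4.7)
The plan is to follow the potential-theoretic strategy developed in \cite{BGK,BGKBook}: represent the stopped expected hitting time as the ratio of an integral of the equilibrium potential against the Gibbs measure and a capacity, then plug in \cref{thm1}. Concretely, let $h_\eps(y) := \P^y[\tau_{G_{k+1}^\eps} < \tau_{S_k^\eps}\wedge \tau_{\Omega^c}]$ be the equilibrium potential and $\nu_\eps$ the equilibrium measure on $\partial G_{k+1}^\eps$. The standard identity reads
\begin{equation*}
\E^{\nu_\eps}\!\left[\tau_{S_k^\eps}\, \I_{\tau_{S_k^\eps} < \tau_{\Omega^c}}\right]\, \operatorname{cap}(G_{k+1}^\eps, S_k^\eps; \Omega) = \int_{\Omega \setminus (G_{k+1}^\eps \cup S_k^\eps)} h_\eps(y)\, e^{-F(y)/\eps}\, dy.
\end{equation*}
The proof then reduces to three subtasks: an upper bound on the numerator, a lower bound on the capacity, and the transfer from $\E^{\nu_\eps}$ to $\E^x$.

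Since only an upper bound is required (constants need not be sharp), the crude bound $0\le h_\eps \le 1$ suffices for the numerator. I would split $\Omega \setminus S_k^\eps$ into the wells $\bigcup_{x \in G_{k+1}} O_{x,\eps}$, the wells of higher groups $G_j$ with $j \ge k+2$, the saddle bridges $O_{z,\delta}$, and the remainder. On the $G_{k+1}$ wells, assumption \eqref{eq:struc1} gives $F \le F(G_{k+1}) + \omega(\eps)$, producing the main term $C e^{-F(G_{k+1})/\eps}\sum_{x \in G_{k+1}} |O_{x,\eps}|$. On higher wells, the gap condition \eqref{eq:struc10} supplies an extra factor $e^{-\delta_2/\eps}$, which is absorbed. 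Outside all wells and bridges, the structural setup forces $F \ge F(G_{k+1}) + c\delta$, again giving exponential smallness, while the bridges themselves have vanishing weighted volume by \eqref{eq:struc3}--\eqref{eq:struc4}.

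For the denominator, monotonicity of capacity under enlargement of the contact sets gives
\begin{equation*}
\operatorname{cap}(G_{k+1}^\eps, S_k^\eps; \Omega) \ge \max_{x \in G_k,\, y \in G_{k+1}} \operatorname{cap}(B_\eps(x), B_\eps(y); \Omega),
\end{equation*}
which is exactly the first displayed bound in \cref{thm2}. The explicit parallel and series forms then come from inserting \cref{thm1} for the pairwise capacity of a maximizing pair $(x_a,x_b)$, together with the explicit representations of $d_\eps$ and $V_\eps$ from \cref{lem:geometric-d,lem:geometric-V}, which exchange the geometric ratio $V_\eps/d_\eps$ for $\mathcal{H}^{n-1}(\{G_z<\eps\})/\mathcal{H}^1(\{g_z<\eps\})$; the prefactor $\eps$ from the capacity formula is what produces the $\eps^{-1}$ appearing in front of the two explicit expressions. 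Finally, to convert from $\E^{\nu_\eps}$ to $\E^x$, the strong Markov property combined with an equilibration estimate on the single well $O_{x,\eps}$ shows that the exit distribution at $\partial G_{k+1}^\eps$ starting from any interior $x$ is within total variation $O(\eps^{\alpha/2})$ of $\nu_\eps$, because the in-well relaxation time is polynomial in $\eps$ whereas the metastable time is exponential. This mismatch is precisely the additive error $C\eps^{\alpha/2}$ in the theorem.

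The main obstacle I anticipate is this last step: producing the explicit polynomial rate $\eps^{\alpha/2}$ for in-well equilibration when the minimum may be degenerate and $F$ is only controlled through \eqref{eq:struc1}. A Harnack/spectral-gap argument for the operator $\eps \Delta - \nabla F \cdot \nabla$ on $O_{x,\eps}$ must be set up carefully, likely using the intermediate scale $\eps_1$ from \eqref{lem:geometric-1} to decouple the flat direction from the growth captured by $G_z$. The numerator localization and capacity monotonicity steps are by comparison routine, with \cref{thm1} doing the real geometric work.
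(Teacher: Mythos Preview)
Your overall architecture matches the paper: ratio of an $L^1$ integral of the equilibrium potential to a capacity, monotonicity to reduce to a pairwise capacity, then \cref{thm1} with \cref{lem:geometric-d,lem:geometric-V} for the explicit parallel and series formulas. Two points deserve correction, one of which resolves what you flag as the main obstacle.

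\textbf{Numerator.} The assertion that ``the crude bound $0\le h_\eps\le 1$ suffices'' is not correct. Your decomposition of $\Omega\setminus S_k^\eps$ omits the neighborhoods of the minima in $S_k$ themselves (outside the tiny balls $B_\eps$). On any component $U_i$ of $\{F<F(G_{k+1};S_k)-\delta_2/3\}$ that meets $S_k$, the potential $F$ dips down to $F(S_k)<F(G_{k+1})$, so $\int_{U_i} e^{-F/\eps}$ is of the wrong order $e^{-F(S_k)/\eps}$. The paper's \cref{lem:l1} handles exactly this by invoking \cref{l:hab:bound}: on such a component $F(y;S_k)\le F(y;G_{k+1})$, whence $h_{G_{k+1}^\eps,S_k^\eps}(y)\le C\eps^q e^{-(F(y;G_{k+1})-F(y;S_k))/\eps}$, and only then does the combined integrand $h\cdot e^{-F/\eps}$ become negligible. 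So the smallness of $h_\eps$ near the deeper wells is an essential ingredient, not a refinement.

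\textbf{Transfer to $\E^x$ and the error $C\eps^{\alpha/2}$.} The paper does \emph{not} pass through $\E^{\nu_\eps}$ and then equilibrate. Instead it works directly with the pointwise object $w_{S_k^\eps,\Omega}(x)=\E^x[\tau_{S_k^\eps}\I_{\tau_{S_k^\eps}<\tau_{\Omega^c}}]$ (the potential of the equilibrium potential, \cref{d:POEP}) and proves in \cref{lem:POEP_ratio} that
\[
\bigg|\,w_{S_k^\eps,\Omega}(x)-\frac{\int h_{S_k^\eps,\Omega^c}\,h_{B_\rho(x),S_k^\eps}\,d\mu_\eps}{\capacity(B_\rho(x),S_k^\eps;\Omega)}\,\bigg|\le C\Big(\frac{\rho}{\sqrt{\eps}}\Big)^{\alpha}\big(w_{S_k^\eps,\Omega}(x)+1\big).
\]
The error comes purely from the H\"older oscillation of $w+h$ on the ball $B_{\sqrt\eps}(x)$, obtained via Harnack (\cref{lem:harnack+,lem:holder}); taking $\rho=\eps$ gives exactly $C\eps^{\alpha/2}$. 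No spectral gap, no in-well relaxation time, and no intermediate scale $\eps_1$ are needed for this step. Your proposed equilibration route could presumably be pushed through, but it is substantially harder and not how the $\eps^{\alpha/2}$ actually arises here.
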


If both the minimas and saddles are non-degenerate points, and  there is only one saddle connecting $x_a,x_b$ (with $F(x_a) < F(x_b)$), where $x_a,x_b$ are the only minimas of $F$, the above estimate coincides with Eyring-Kramers formula (up to a constant)
\begin{align*}
  \E^x[\tau_{S_k^\eps} \I_{\tau_{S_k^\eps} < \tau_{\Omega^c}}] \leq C \frac{1}{\lambda_1} \sqrt{\frac{\det (\nabla^2 F(z))}{\det (\nabla^2 F(x_a))}} e^{(F(z)-F(x_a))/\epsilon}.
\end{align*}
Here $\lambda_1$ is the first eigenvalue of the Hessian of $F$ at the saddle $z$, and the additive error in \cref{thm2} can be removed for small $\epsilon$ as the right hand side of the above tends to $\infty$ as $\epsilon \to 0$.

\section{Preliminaries}
The generator of the process \cref{eq:SGD} is the following elliptic operator
\begin{align} \label{eq:op}
  L_\varepsilon = -\varepsilon \Delta  + \nabla F \cdot \nabla.
\end{align}
In this section we study the potential and regularity theory associated with the operator \cref{eq:op}.
We provide the identities and pointwise estimates that we will need in the course of the proofs. Most of these are standard, but we provide them adapted to our situation for the reader's convenience. We note that in this section we only require that the potential $F$ is of class $C^2$ and satisfy the quadratic growth condition \cref{eq:struc0}.

\subsection{Potential theory}
\begin{definition}
  Let  $\Omega \subset \R^n$ be a regular domain and let $G_\Omega(x,y)$ be the Green's function for $\Omega$, i.e., for every $f \in C(\Omega)$ the function
  \begin{align*}
    u_f = \int_{\Omega} G_\Omega(x,y)f(y) dy
  \end{align*}
  is the solution of the Poisson equation
  \begin{align*}
    \begin{cases}
      L_\varepsilon u_f = f & \text{in } \,  \Omega \\
      u_f = 0 & \text{on } \,   \partial \Omega.
    \end{cases}
  \end{align*}
  The natural associated measures are the Gibbs measure $d\mu_\varepsilon = e^{-F/\varepsilon} dx$ and the Gibbs surface measure $d\sigma_\varepsilon = e^{-F/\varepsilon} d\mathcal{H}^{n-1}$.
\end{definition}

\begin{remark} \label{r:symmetry}
  Note that the Green's function is symmetric w.r.t. the Gibbs measure, i.e.
  \begin{align*}
    G(x,y) e^{-F(x)/\varepsilon} = G(y,x)e^{-F(y)/\varepsilon}.
  \end{align*}
\end{remark}
We also have the fundamental Green's identities. Here we assume that $\Omega$ is a Lipschitz domain and denote the inner normal by $n$.
\begin{lemma}
  Let $\Omega$ be a smooth domain, $\psi,\phi$ be in $C^2(\overline \Omega)$, and $G_\Omega$ be the Green's function for $\Omega$. Then the following Green's identities holds {\bf (Green's first identity)}
  \begin{align} \label{eq:green1}
    \int_\Omega \psi L_\varepsilon \phi - \varepsilon \nabla \psi \cdot \nabla \phi d\mu_\varepsilon
    = \varepsilon \int_{\partial \Omega} \psi \nabla \phi \cdot n d\sigma_\varepsilon,
  \end{align}
  and {\bf (Green's second identity)}
  \begin{align}\label{eq:green2}
    \int_\Omega \psi L_\varepsilon \phi - \phi L_\varepsilon \psi d\mu_\varepsilon = \varepsilon \int_{\partial \Omega} \psi \nabla \phi \cdot n - \phi \nabla \psi \cdot n d\sigma_\varepsilon.
  \end{align}
  Furthermore, the following (balayage) representation formula holds: for every $g \in C(\partial \Omega)$ the function
  \begin{align} \label{eq:representation}
    u(x) = \varepsilon e^{F(x)/\varepsilon} \int_{\partial \Omega} g \nabla_y G_\Omega(y,x) \cdot n d\sigma_\varepsilon(y)
  \end{align}
  is the solution of the Dirichlet problem
  \begin{align*}
    \begin{cases}		L_\varepsilon u = 0 &\text{in } \,  \Omega \\
      u = g &\text{on } \,   \partial \Omega.
    \end{cases}
  \end{align*}
\end{lemma}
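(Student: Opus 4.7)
All three statements follow from the single structural observation that the Gibbs measure is the natural invariant measure for $L_\varepsilon$, equivalently, that
\begin{equation*}
  e^{-F/\varepsilon} L_\varepsilon \phi = -\varepsilon \, \div\!\left( e^{-F/\varepsilon} \nabla \phi \right).
\end{equation*}
This identity turns $L_\varepsilon$ acting on the Gibbs weight into a divergence-form operator, so the classical proofs of the Green identities and the balayage formula carry over verbatim once this rewriting is made.

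\textbf{First identity.} I would multiply the displayed identity by $\psi$, integrate over $\Omega$, apply the product rule $\div(\psi e^{-F/\varepsilon}\nabla\phi)=\psi\,\div(e^{-F/\varepsilon}\nabla\phi)+e^{-F/\varepsilon}\nabla\psi\cdot\nabla\phi$, and invoke the divergence theorem. Since the paper uses the inner normal $n=-\nu$ where $\nu$ is the outward normal, a single sign flip at the boundary recovers the formula in \cref{eq:green1}. The Lipschitz regularity of $\partial\Omega$ and the $C^2$-regularity of $\psi,\phi$ make every step elementary.

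\textbf{Second identity.} Write \cref{eq:green1} again with $\psi$ and $\phi$ interchanged and subtract; the symmetric $\int \varepsilon\nabla\psi\cdot\nabla\phi\,d\mu_\varepsilon$ terms cancel, yielding \cref{eq:green2}.

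\textbf{Representation formula.} Fix $x\in\Omega$ and take $\psi(y)=G_\Omega(y,x)$, $\phi(y)=u(y)$, where $u$ denotes the (unique) solution of the Dirichlet problem with data $g$. Apply \cref{eq:green2} on the punctured domain $\Omega_\rho:=\Omega\setminus\overline{B_\rho(x)}$. The volume integral vanishes because $L_\varepsilon u\equiv 0$ in $\Omega$ and, by the definition of the Green's function together with the symmetry of \cref{r:symmetry}, $L_{\varepsilon,y} G_\Omega(y,x)=0$ on $\Omega\setminus\{x\}$. On $\partial\Omega$ the factor $\psi=G_\Omega(\cdot,x)$ vanishes, leaving $-\varepsilon\int_{\partial\Omega} g\,\nabla_y G_\Omega(y,x)\cdot n\,d\sigma_\varepsilon$. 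Thus it remains to evaluate the limit as $\rho\to 0$ of the boundary integral on $\partial B_\rho(x)$, where $n_\rho=(y-x)/\rho$ is the inner normal of $\Omega_\rho$.

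\textbf{Main obstacle.} The only non-routine step is the singular limit on $\partial B_\rho(x)$. The plan is to use that near the pole, $L_\varepsilon$ differs from $-\varepsilon\Delta$ only by a bounded first-order term, so $G_\Omega(y,x)$ inherits the standard fundamental-solution asymptotics
\begin{equation*}
  G_\Omega(y,x)\sim \frac{1}{(n-2)\omega_{n-1}\varepsilon\,|y-x|^{n-2}}\qquad (n\ge 3),
\end{equation*}
with the corresponding gradient estimate. Then $\varepsilon\int_{\partial B_\rho(x)} G_\Omega\,\nabla u\cdot n_\rho\,d\sigma_\varepsilon=O(\rho)\to 0$, while the continuity of $u$ and of $e^{-F/\varepsilon}$ at $x$ yields
\begin{equation*}
  -\varepsilon\int_{\partial B_\rho(x)} u\,\nabla_y G_\Omega(y,x)\cdot n_\rho\,d\sigma_\varepsilon \;\longrightarrow\; u(x)\,e^{-F(x)/\varepsilon}.
\end{equation*}
Equating the two sides of Green's second identity, passing to the limit and multiplying by $e^{F(x)/\varepsilon}$ gives \cref{eq:representation}. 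The case $n=2$ is analogous with the logarithmic fundamental solution; uniqueness of $u$ then identifies the function produced by \cref{eq:representation} with the Dirichlet solution.
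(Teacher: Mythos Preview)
Your proof is correct and follows essentially the same approach as the paper. The paper derives \cref{eq:green1} by expanding $L_\varepsilon\phi=-\varepsilon\Delta\phi+\nabla F\cdot\nabla\phi$ and integrating by parts directly (which is your divergence identity unwound), obtains \cref{eq:green2} by the same swap-and-subtract, and for \cref{eq:representation} simply plugs $\phi=G_\Omega(\cdot,y)$ into \cref{eq:green2} using the distributional identity $L_\varepsilon G_\Omega(\cdot,y)=\delta_y$; your punctured-domain argument is the rigorous execution of that formal step.
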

\begin{proof}
  Integration by parts gives
  \begin{align*}
    \int_\Omega \psi L_\varepsilon \phi d\mu_\varepsilon &= \int_\Omega \psi (-\varepsilon \Delta \phi + \nabla F \cdot \nabla \phi) d\mu_\varepsilon\\
    &= \int_\Omega (\varepsilon \nabla \psi \cdot \nabla \phi - \frac{\varepsilon}{\varepsilon} \psi \nabla F \cdot \nabla \phi + \psi \nabla F \cdot \nabla \phi) d\mu_\varepsilon + \varepsilon \int_{\partial \Omega} \psi \nabla \phi \cdot n d\sigma_\varepsilon \\
    &=\int_\Omega \varepsilon \nabla \psi \cdot \nabla \phi d\mu_\varepsilon + \varepsilon \int_{\partial \Omega} \psi \nabla \phi \cdot n d\sigma_\varepsilon.
  \end{align*}
  The second Green's identity follows from the first by applying it twice
  \begin{multline*}
    \int_\Omega \psi L_\varepsilon \phi - \varepsilon \nabla \psi \cdot \nabla \phi d\mu_\varepsilon
    -\int_\Omega \phi L_\varepsilon \psi + \varepsilon \nabla \psi \cdot \nabla \phi d\mu_\varepsilon\\
    = \varepsilon \int_{\partial \Omega} \psi \nabla \phi \cdot n d\sigma_\varepsilon-\varepsilon \int_{\partial \Omega} \phi \nabla \psi \cdot n d\sigma_\varepsilon.
  \end{multline*}
  We may now obtain the representation formula for the Dirichlet problem. We choose $\phi(x) = G_\Omega(x,y)$ and obtain by Green's second identity that
  \begin{align*}
    \psi(x)e^{-F(x)/\varepsilon} - \varepsilon \int_{\partial \Omega} \psi \nabla \phi \cdot n d\sigma_\varepsilon = \int_\Omega \phi L_\varepsilon \psi d\mu_\varepsilon.
  \end{align*}
  Now relabeling $x \to y$, we get the representation formula \cref{eq:representation}.
\end{proof}

Recall the definition of capacity (variational): for $A,B \subset \Omega$ two disjoint compact sets
\begin{align} \label{e:cap:precise}
  \capacity(A,B; \Omega) := \inf \left( \eps \int_{\Omega } |\nabla h|^2 e^{-\frac{F}{\eps}}\, dx \, : \,\, h \geq 1 \,\, \text{in } A, u \in H^1_0(\Omega \setminus B) \right).
\end{align}
The extension of capacity to open sets follows in the classical way
\begin{align*}
  \capacity(U,B;\Omega) := \sup\{\capacity(A,B;\Omega)\, :\, \text{$A$ compact and $A \subset U$}\}.
\end{align*}
It is well known that for bounded sets with regular boundary the continuity of the capacity implies that $\capacity(U,B;\Omega) = \capacity(\overline{U},B;\Omega)$. The extension w.r.t the second entry follows similarly.
The variational definition of the capacity has many equivalent forms, one that we will need is the one below:

\begin{lemma} \label{l:capacity}
  Let $A,B \subset \Omega$ be two disjoint compact sets. Then the variational formulation of capacity coincides with the balayage definition, i.e.,
  \begin{align*} 
    \capacity(A,B;\Omega) = \sup \left \{\int_A e^{-F(y)/\varepsilon} d\mu(y): \supp \mu \subset A, \int_{\Omega} G_{\Omega \setminus B}(x,y) d\mu(y) \leq 1 \right \}.
  \end{align*}
  The unique measure which maximizes the above, i.e., satisfying
  \begin{align*}
    \int_A e^{-F(y)/\varepsilon} d\mu_{A,B}(y) = \capacity(A,B;\Omega), \quad \int_{\Omega} G_{\Omega \setminus B}(x,y) d\mu(y) \leq 1,
  \end{align*}
  is called the equilibrium measure $\mu_{A,B}$. The corresponding equilibrium potential is defined as $h_{A,B} = \int_{\Omega} G_{\Omega \setminus B} (x,y) d\mu_{A,B}(y)$ and is the minimizer of \cref{e:cap:precise}.

  If in addition $A,B$ are smooth, then we have
  \begin{equation} \label{eq:measure_energy_equality}
    \begin{split}
      \capacity(A,B;\Omega)
      &=
      \int_A e^{-F(y)/\varepsilon} d\mu_{A,B}(y)
      =
      \varepsilon \int |\nabla h_{A,B}|^2 d\mu_\varepsilon
      \\
      &= -\varepsilon \int_{\partial A} \nabla h_{A,B} \cdot n d\sigma_\varepsilon.
    \end{split}
  \end{equation}
\end{lemma}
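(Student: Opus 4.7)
The plan is threefold: first produce the variational minimizer $h^* =: h_{A,B}$ and characterize it as the Dirichlet solution of $L_\varepsilon h^* = 0$ in $D := \Omega \setminus (A \cup B)$ with the appropriate boundary values; then identify $h^*$ with the Green potential of a measure $\mu_{A,B}$ supported on $\partial A$; finally verify the balayage identity and the chain of equalities in \cref{eq:measure_energy_equality} via Green's identities and a Fubini--symmetry duality argument.

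For Step 1, I would note that the Dirichlet energy $h \mapsto \varepsilon \int |\nabla h|^2\, d\mu_\varepsilon$ is strictly convex and weakly lower semicontinuous on $H^1_0(\Omega \setminus B)$, while the constraint set $\{h \geq 1 \text{ on } A\}$ is convex and weakly closed. Since $d\mu_\varepsilon$ is comparable to Lebesgue measure on bounded sets, the direct method delivers a unique minimizer $h^*$; testing against compactly supported variations gives $L_\varepsilon h^* = 0$ in $D$, while the smoothness of $\partial A$ together with the strong maximum principle forces $h^* \equiv 1$ on $A$ and $h^* = 0$ on $\partial B \cup \partial \Omega$.

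For Step 2, I would extend $h^*$ by $1$ on $A$ and $0$ on $B$. The distribution $L_\varepsilon h^*$ on $\Omega \setminus B$ is then supported on $\partial A$, and its negative part---which is a positive measure by the sign of the normal-derivative jump---serves as the equilibrium measure $\mu_{A,B}$. Extending the Green's formula $u_f(x) = \int G_{\Omega \setminus B}(x,y) f(y)\,dy$ to Radon measures yields the balayage representation
\[
  h^*(x) = \int G_{\Omega \setminus B}(x,y)\, d\mu_{A,B}(y),
\]
and $\mu_{A,B}$ is admissible in the supremum by construction. For any admissible $\mu$ with potential $h_\mu(x) := \int G_{\Omega \setminus B}(x,y)\, d\mu(y) \leq 1$, I would then compute
\[
  \int_A e^{-F/\varepsilon} d\mu = \int_A h^*(y)\, e^{-F(y)/\varepsilon} d\mu(y) = \iint G_{\Omega \setminus B}(y,z)\, d\mu_{A,B}(z)\, e^{-F(y)/\varepsilon} d\mu(y),
\]
and apply Fubini together with the Gibbs symmetry $G(y,z)\, e^{-F(y)/\varepsilon} = G(z,y)\, e^{-F(z)/\varepsilon}$ from \cref{r:symmetry} to collapse this into $\int h_\mu(z)\, e^{-F(z)/\varepsilon} d\mu_{A,B}(z) \leq \int e^{-F/\varepsilon} d\mu_{A,B}$. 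This establishes the identity in the supremum and, via the equality case (which forces $h_\mu \equiv 1$ on $\supp \mu_{A,B}$, hence $\mu = \mu_{A,B}$ by the maximum principle applied to $h_\mu - h^*$ in $D$), the uniqueness of the maximizer.

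For Step 3 (smooth $A,B$), Green's first identity \cref{eq:green1} applied to $\phi = \psi = h^*$ on $D$, together with $L_\varepsilon h^* = 0$ and the boundary values of $h^*$, yields $\varepsilon \int |\nabla h^*|^2\, d\mu_\varepsilon = -\varepsilon \int_{\partial A} \nabla h^* \cdot n\, d\sigma_\varepsilon$ with $n$ the inner normal as in the statement; the remaining middle equality in \cref{eq:measure_energy_equality} follows because on smooth $\partial A$ the measure $\mu_{A,B}$ coincides with $-(\nabla h^* \cdot n)\, e^{F/\varepsilon}\, d\mathcal{H}^{n-1}$. I expect the main obstacle to be the duality computation in Step 2: it relies crucially on the Gibbs-weighted Green's function symmetry and requires careful bookkeeping between three distinct normalizations (Lebesgue, Gibbs volume, and Gibbs surface measure on $\partial A$); for non-smooth $A, B$ one must in addition approximate by smooth sets and pass to the limit using the continuity of capacity noted immediately above the lemma.
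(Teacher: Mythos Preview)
Your proposal is correct and follows essentially the same approach as the paper: both arguments hinge on the Gibbs-symmetry of the Green's function (\cref{r:symmetry}), the strong maximum principle forcing $h_{A,B}\equiv 1$ on $A$, uniqueness of the Dirichlet problem to identify the balayage potential with the variational minimizer, and Green's first identity \cref{eq:green1} for the chain \cref{eq:measure_energy_equality}. The only differences are cosmetic---the paper starts from the balayage side (outsourcing existence of $\mu_{A,B}$ and the duality inequality to \cite{AE,C}) and then matches it to the variational minimizer, whereas you start from the variational minimizer and explicitly run the Fubini--symmetry computation to reach the balayage supremum.
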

\begin{proof}
  The claim follows from the symmetry of the Green's function, \cref{r:symmetry}, and the strong maximum principle that $h_{A,B} = 1$ in $A$, see \cite{AE,C}.
  From \cref{eq:green1} we get that
  \begin{align*}
    \int_{\Omega \setminus B} h_{A,B} L_\varepsilon h_{A,B} - \varepsilon |\nabla h_{A,B}|^2 d\mu_\varepsilon
    = \varepsilon \int_{\partial (\Omega \setminus B)} h_{A,B} \nabla h_{A,B} \cdot n d\sigma_\varepsilon.
  \end{align*}
  Using $h_{A,B} = 0$ on $\partial (\Omega \setminus B)$ we see that the right hand side of the above is zero.  Moreover, from  $L_\epsilon h_{A,B} = \mu_{A,B}$ and from the definition of $d\mu_\epsilon$ we get
  \begin{align} \label{eq:measure_energy}
    \int_A e^{-F(y)/\varepsilon} d\mu_{A,B}(y) = \varepsilon \int_{\Omega} |\nabla h_{A,B}|^2 d\mu_\varepsilon.
  \end{align}
  Note that since $h_{A,B} = 1$ in $A$ and $0$ on $\partial (\Omega \setminus B)$, and since $L_\epsilon h_{A,B} = 0$ in $\Omega \setminus (A \cup B)$, we have by the uniqueness of the solution to the Dirichlet problem that $h_{A,B}$ coincides with the variational minimizer of \cref{e:cap:precise}. This establishes the first two equalities of \cref{eq:measure_energy_equality}.

  To prove the last equality in \cref{eq:measure_energy_equality} we insert $h_{A,B} = \phi = \psi$ into \cref{eq:green1} (Green's first identity) and get
  \begin{align*}
    \int_{\Omega \setminus (A \cup B)} h_{A,B} L_\varepsilon h_{A,B} - \varepsilon |\nabla h_{A,B}|^2 d\mu_\varepsilon = \varepsilon \int_{\partial (\Omega \setminus (A \cup B))} h_{A,B} \nabla h_{A,B} \cdot n d\sigma_\varepsilon.
  \end{align*}
  Since $\supp \mu_{A,B} \subset A$, and $h_{A,B} = 0$ on $B$ and $1$ on $A$, we get
  \begin{align} \label{eq:energy_normal_integral}
    - \varepsilon \int_\Omega |\nabla h_{A,B}|^2 d\mu_\varepsilon = \varepsilon \int_{\partial A} \nabla h_{A,B} \cdot n d\sigma_\varepsilon,
  \end{align}
  where $n$ is the outward unit normal of $A$.
  The result now follows from \cref{eq:measure_energy,eq:energy_normal_integral}.
\end{proof}

\begin{definition} \label{d:POEP}
  Let $\Omega$ be a smooth domain and $A \subset \Omega$. Then we define the potential of the equilibrium potential as
  \begin{align*}
    w_{A,\Omega}(x) = \int_\Omega G_{\Omega \setminus A}(x,y) h_{A,\Omega^c}(y) dy.
  \end{align*}
\end{definition}
The definition of the potential of the equilibrium potential might seem technical at first. However, $w_{A,\Omega}$ has a clear probabilistic interpretation as the expected hitting time of hitting $A$ of a process killed at $\partial \Omega$. Indeed, the probabilistic interpretation of $h_{A,\Omega^c}$ is $\P(\tau_A < \tau_{\Omega^c})$ i.e. the probability of hitting $A$ before ${\Omega^c}$.
By Dynkin's formula we see that then
\begin{align*}
  w_{A,{\Omega}}(x)
  &=
  \E^x[w_{A,{\Omega}}(X_{\tau_{A \cup {\Omega^c}}})] - \E^x \left [ \int_0^{\tau_{A \cup {\Omega^c}}} L_\eps w_{A,{\Omega}}(X_t) dt \right ]
  \\
  &=
  \int_0^{\infty} \E^x \left [ \I_{t \leq \tau_{A \cup {\Omega^c}}} \E^{X_t}[\I_{A}(X_{\tau_{A \cup {\Omega^c}}})] \right ]dt
  \\
  &=
  \int_0^{\infty} \E^x \left [ \I_{t \leq \tau_{A \cup {\Omega^c}}} \I_{A}(X_{\tau_{A \cup {\Omega^c}}}) \right ]dt
  \\
  &=
  \E^x[\tau_A \I_{\tau_A < \tau_{\Omega^c}}].
\end{align*}

We also have the following integration by parts formula for the potential of the equilibrium potential:

\begin{lemma}\label{lem:POEP}
  Let $\Omega$ be a smooth domain, let $A \subset \Omega$ be a smooth set, and assume that $B_{2\rho}(x) \subset \Omega \setminus A$. Then
  \begin{align*}
    \int_{\partial B_\rho(x)} w_{A,\Omega}(y) e^{-F(y)/\varepsilon} d\mu_{B_\rho(x),A}(y) = \int_{\Omega \setminus A} h_{A,\Omega^c}(z) h_{B_\rho(x),A}(z) d\mu_\varepsilon.
  \end{align*}
\end{lemma}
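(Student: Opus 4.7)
The plan is to prove the identity by unfolding the definition of $w_{A,\Omega}$, swapping the order of integration, and exploiting the symmetry of the Green's function stated in \cref{r:symmetry}.

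First I would substitute
\begin{align*}
  w_{A,\Omega}(y) = \int_\Omega G_{\Omega \setminus A}(y,z) h_{A,\Omega^c}(z) \, dz
\end{align*}
into the left-hand side, yielding (after applying Fubini)
\begin{align*}
  \int_\Omega h_{A,\Omega^c}(z) \left( \int_{\partial B_\rho(x)} G_{\Omega \setminus A}(y,z)\, e^{-F(y)/\eps}\, d\mu_{B_\rho(x),A}(y) \right) dz.
\end{align*}
The use of Fubini is legitimate because $h_{A,\Omega^c}$ is bounded, $\mu_{B_\rho(x),A}$ is a finite measure on the compact set $\partial B_\rho(x)$, and the Green's function $G_{\Omega\setminus A}(y,\cdot)$ is integrable (only an $|y-z|^{2-n}$ type singularity which is kept away from $y$ because of the assumption $B_{2\rho}(x) \subset \Omega \setminus A$, and is integrable anyway).

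Next, I would apply the symmetry identity $G_{\Omega \setminus A}(y,z) e^{-F(y)/\eps} = G_{\Omega \setminus A}(z,y) e^{-F(z)/\eps}$ from \cref{r:symmetry} inside the inner integral, pulling the now $y$-independent factor $e^{-F(z)/\eps}$ out in $z$. The remaining inner integral becomes
\begin{align*}
  \int_{\partial B_\rho(x)} G_{\Omega \setminus A}(z,y)\, d\mu_{B_\rho(x),A}(y),
\end{align*}
which, by the representation $h_{B_\rho(x),A}(z) = \int_\Omega G_{\Omega \setminus A}(z,y) \, d\mu_{B_\rho(x),A}(y)$ from \cref{l:capacity}, is precisely $h_{B_\rho(x),A}(z)$.

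Assembling these two reductions gives
\begin{align*}
  \int_{\partial B_\rho(x)} w_{A,\Omega}(y) e^{-F(y)/\eps} d\mu_{B_\rho(x),A}(y) = \int_\Omega h_{A,\Omega^c}(z) h_{B_\rho(x),A}(z) \, d\mu_\eps,
\end{align*}
and finally I would observe that $h_{B_\rho(x),A} \equiv 0$ on $A$ by construction, so the integral on the right may be restricted to $\Omega \setminus A$ without change, matching the claimed formula. There is no real obstacle here beyond checking integrability for Fubini and keeping track of the Green's function symmetry; the lemma is essentially a self-adjointness identity dressed up through the definition of $w_{A,\Omega}$, and an alternative (longer) route via Green's second identity \cref{eq:green2} on $\Omega \setminus (B_\rho(x) \cup A)$, with $\psi = w_{A,\Omega}$, $\phi = h_{B_\rho(x),A}$, would give the same conclusion after using the boundary conditions $w_{A,\Omega} = 0$ on $\partial\Omega \cup \partial A$ and $h_{B_\rho(x),A} = 0$ on $\partial\Omega \cup \partial A$ and $h_{B_\rho(x),A} = 1$ on $\partial B_\rho(x)$.
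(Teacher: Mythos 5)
Your proposal is correct and follows the same route as the paper's own proof: unfold the definition of $w_{A,\Omega}$, apply Fubini, invoke the Green's-function symmetry from \cref{r:symmetry}, and then recognize the resulting inner integral as $h_{B_\rho(x),A}(z)$ via the representation in \cref{l:capacity}. The only cosmetic difference is that you explicitly justify Fubini and note $h_{B_\rho(x),A}=0$ on $A$ to restrict the final integral, which is fine, while the paper phrases the last identification via uniqueness of the Dirichlet--Poisson problem rather than citing the formula directly.
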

The above statement looks more familiar if we write it in the formal way as
\begin{align*}
  \int w_{A,\Omega} e^{-F/\varepsilon} L_\eps h_{B_\rho,A} = \int L_\varepsilon w_{A,\Omega} e^{-F/\varepsilon} h_{B_\rho,A} dz.
\end{align*}
\begin{proof}
  Using the definition of $w_{A,\Omega}$ and Fubini's theorem
  \begin{align*}
    \int_{\partial B_\rho(x)} &w_{A,\Omega}(y) e^{-F(y)/\varepsilon} d\mu_{B_\rho(x),A}(y) \\
    &=
    \int_{\partial B_\rho(x)} \left (\int_{\Omega \setminus A} G_{\Omega \setminus A}(y,z) h_{A,\Omega^c}(z)dz \right ) e^{-F(y)/\varepsilon} d\mu_{B_\rho(x),A}(y) \\
    &=
    \int_{\Omega \setminus A} h_{A,\Omega^c}(z) \int_{\partial B_\rho(x)} G_{\Omega \setminus A}(y,z) e^{-F(y)/\varepsilon} d\mu_{B_\rho(x),A}(y) dz.
  \end{align*}
  Using the symmetry of the Green's function, \cref{r:symmetry},
  \begin{align*}
    \int_{\Omega \setminus A} &h_{A,\Omega^c}(z) \int_{\partial B_\rho(x)} G_{\Omega \setminus A}(y,z) e^{-F(y)/\varepsilon} d\mu_{B_\rho(x),A}(y) dz \\
    &=
    \int_{\Omega \setminus A} h_{A,\Omega^c}(z) e^{-F(z)/\varepsilon} \int_{\partial B_\rho(x)} G_{\Omega \setminus A}(z,y) d\mu_{B_\rho(x),A}(y) dz.
  \end{align*}
  Note that $\supp \mu_{B_\rho(x),A} \subset \partial B_\rho(x)$ and as such
  \begin{align*}
    u (z) = \int_{\Omega} G_{\Omega \setminus A}(z,y) d\mu_{B_\rho(x),A}(y) = \int_{\partial B_\rho(x)} G_{\Omega \setminus A}(z,y) d\mu_{B_\rho(x),A}(y)
  \end{align*}
  solves the equation $L_\epsilon u = \mu_{B_\rho(x),A}$ in $\Omega$ and $u = 0$ on $\partial {\Omega \setminus A}$. Consequently, by the uniqueness result to the Dirichlet-Poisson problem, we get $u(z) = h_{B_\rho(x),A}(z)$. Hence
  \begin{multline*}
    \int_{\Omega \setminus A} h_{A,\Omega^c}(z) e^{-F(z)/\varepsilon} \int_{\partial B_\rho(x)} G_{\Omega \setminus A}(z,y) d\mu_{B_\rho(x),A}(y) dz
    \\
    =
    \int_{\Omega \setminus A} h_{A,\Omega^c}(z) h_{B_\rho(x),A}(z) d\mu_\varepsilon(z).
  \end{multline*}
  Combining the equalities above yields the result.
\end{proof}

\subsection{Classical pointwise estimates}

In this section we recall classical pointwise estimates for functions which satisfy
\begin{align*}
  L_\varepsilon u = f
\end{align*}
in a domain $\Omega$, where the operator $L_\eps$ is defined in \cref{eq:op}. First, since we assume that $F \in C^2(\R^n)$, then for all H\"older continuous $f$ the solutions of the above equation are $C^{2,\alpha}$-regular, see \cite{GT}. However, these regularity estimates depend on $\eps$ and blow up as $\eps \to 0$. The point is that we may obtain regularity estimates for constants independent of $\eps$ if we restrict ourselves on small enough scales. To this aim, for a given domain  $\Omega$ we choose a positive number $\nu$ such that
\begin{align*}
  \frac{\|\nabla F\|_{L^\infty(\Omega)}}{\varepsilon}  \leq \nu.
\end{align*}
We have the following two theorems from \cite{GT}.

\begin{lemma}\label{l:harnack}
  Let $\Omega$ be a domain and let $u \in C^2(\Omega)$ be a non-negative function satisfying $L_\varepsilon u = 0$. Then for any $B_{3R}(x) \subset \Omega$ it holds that
  \begin{align*}
    \sup_{B_R(x)} u \leq C \inf_{B_R(x)} u
  \end{align*}
  for a  constant $C= C(n,\nu R)$. In particular,  if $\|\nabla F\|_{L^\infty(\Omega)} \leq L$, then for $R \leq \frac{\varepsilon}{L}$ the constant $C$ is independent of 		$\eps$. 		Furthermore, for $p \in (1,\infty)$ and any number $k$ we have
  \begin{align*}
    \sup_{B_R(x)} |u-k| \leq C_p \left ( \fint_{B_{2R}(x)} |u-k|^p dx\right )^{1/p}
  \end{align*}
  and
  \begin{align*}
    \left ( \fint_{B_{2R}(x)} |u|^p dx\right )^{1/p} \leq C_p  \inf_{B_R(x)} u,
  \end{align*}
  where the symbol $\fint$ denotes the average integral, and the constant $C_p$ in addition to above depends also on $p$.
\end{lemma}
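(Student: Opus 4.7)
The plan is a rescaling argument that reduces each of the three estimates to the classical Harnack inequality and Moser-type bounds for uniformly elliptic second-order operators with bounded lower-order coefficients, as found in \cite{GT}. The only subtle point is tracking the quantitative dependence of the constants on the drift term, which is where the parameter $\nu R$ enters.

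First I would divide the equation $L_\varepsilon u = 0$ by $\varepsilon$ to put it in the form
\begin{align*}
-\Delta u + b \cdot \nabla u = 0, \qquad b := \varepsilon^{-1}\nabla F,
\end{align*}
so that $\|b\|_{L^\infty(\Omega)} \leq \nu$. Setting $v(y) := u(x + Ry)$ for $y \in B_3$, I would obtain
\begin{align*}
-\Delta v + \tilde b \cdot \nabla v = 0, \qquad \tilde b(y) := R\, b(x+Ry),
\end{align*}
with $\|\tilde b\|_{L^\infty(B_3)} \leq \nu R$. Applying the classical Harnack inequality for such operators on $B_3$ (e.g.\ Theorem 8.20 or 9.22 in \cite{GT}) gives $\sup_{B_1} v \leq C \inf_{B_1} v$ with $C = C(n,\nu R)$. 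Undoing the rescaling yields the first bound; the small-scale claim follows because $R \leq \varepsilon/L$ implies $\nu R \leq 1$, so $C$ may be chosen to depend only on $n$.

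The two $L^p$-type bounds follow by the same rescaling scheme. For the second, $u-k$ also satisfies $L_\varepsilon(u-k)=0$ since $L_\varepsilon$ annihilates constants, so the local maximum principle for solutions (Theorem 8.17 in \cite{GT}) gives the estimate on the rescaled function, and dilating back produces the normalized average integral on the right-hand side. The third estimate is an immediate consequence of the first: by covering $B_{2R}$ with finitely many overlapping balls of radius $R/2$ and iterating the Harnack bound one obtains $\sup_{B_{2R}} u \leq C' \inf_{B_R} u$ with $C'$ depending only on $n$ and $\nu R$, hence
\begin{align*}
\left(\fint_{B_{2R}} u^p \, dx\right)^{1/p} \leq \sup_{B_{2R}} u \leq C' \inf_{B_R} u.
\end{align*}

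The main obstacle, such as it is, is purely bookkeeping: confirming how the constants in the cited theorems scale with the coefficient bound after dilation, and verifying that the chaining step from $B_R$ to $B_{2R}$ costs only a factor depending on $n$ and $\nu R$. No analytic ingredient beyond standard De Giorgi--Nash--Moser theory is required; the novelty is only in the identification of the natural dimensionless parameter $\nu R$ that controls the constants uniformly in $\varepsilon$ on sufficiently small scales.
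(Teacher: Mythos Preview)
Your proposal is correct and matches the paper's approach: the paper does not give an independent proof but simply records the lemma as a direct consequence of the results in \cite{GT}, and your rescaling argument is exactly how one extracts the stated dependence $C=C(n,\nu R)$ from those theorems. The only quibble is in the chaining step for the third estimate: with balls of radius $R/2$ centered at points of $B_{2R}(x)$ the tripled balls need not stay inside $B_{3R}(x)$, but shrinking to radius $R/4$ (or invoking the weak Harnack inequality directly) fixes this immediately.
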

In the non-homogeneous case $L_\varepsilon u = f$ we have the following generalization of Harnack's inequality.
\begin{lemma} \label{l:maximum_principle}
  Let $\Omega$ be a domain and let $u \in C^2(\Omega)$ be a non-negative function satisfying $L_\varepsilon u = f$. Then for any $B_{3R}(x) \subset \Omega$ it holds that
  \begin{align*}
    \sup_{B_R(x)} u \leq C \left ( \inf_{B_R(x)} u + \frac{R}{\varepsilon} \|f\|_{L^n(B_{2R}(x))} \right )
  \end{align*}
  for a constant $C= C(n,\nu R)$. In particular,  if $\|\nabla F\|_{L^\infty(\Omega)} \leq L$, then for $R \leq \frac{\varepsilon}{L}$ the constant $C$ is independent of 		$\eps$  and we have
  \begin{align*}
    \sup_{B_R(x)} u \leq C \left (\inf_{B_R(x)} u + \|f\|_{L^n(B_{2R}(x))} \right ).
  \end{align*}
\end{lemma}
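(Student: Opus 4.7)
The plan is to reduce the statement to a classical local maximum principle for uniformly elliptic operators in non-divergence form with bounded first-order coefficients, such as Theorems 9.20 and 9.22 of \cite{GT}. The operator $L_\varepsilon = -\varepsilon\Delta + \nabla F \cdot \nabla$ is not in the form these theorems expect, but it becomes so after normalizing: dividing $L_\varepsilon u = f$ through by $\varepsilon$ yields
\[
-\Delta u + b\cdot\nabla u = f/\varepsilon, \qquad b := \nabla F/\varepsilon,
\]
with $\|b\|_{L^\infty(\Omega)} \leq \nu$ by the definition of $\nu$.

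Next I would rescale to unit size. Set $v(y) := u(x + Ry)$, which is non-negative on $B_3(0) \subset \R^n$ and satisfies
\[
-\Delta v + R\, b(x+Ry)\cdot\nabla v = (R^2/\varepsilon)\, f(x+Ry) \quad \text{on } B_3(0).
\]
The rescaled drift has $L^\infty$ bound $\nu R$, and a straightforward change of variables gives $\|(R^2/\varepsilon)\, f(x + R\,\cdot)\|_{L^n(B_2(0))} = (R/\varepsilon)\|f\|_{L^n(B_{2R}(x))}$. The classical non-homogeneous Harnack inequality for non-negative solutions of linear elliptic equations in non-divergence form, applied to $v$ on $B_1(0) \subset B_3(0)$, then yields
\[
\sup_{B_1(0)} v \leq C(n,\nu R)\Bigl( \inf_{B_1(0)} v + (R/\varepsilon)\|f\|_{L^n(B_{2R}(x))}\Bigr),
\]
and undoing the rescaling recovers the statement. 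When $R \leq \varepsilon/L$ we have $\nu R \leq 1$, so $C$ degenerates to a dimension-only constant, giving the second assertion.

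The only delicate point is to verify that the constant $C$ in the Gilbarg--Trudinger Harnack estimate depends on the drift only through the scale-invariant product (radius)$\cdot\|b\|_{L^\infty}$; this is the mechanism by which $\nu R$ emerges as the relevant parameter after the scaling. The genuinely hard step --- the Krylov--Safonov machinery underlying the local maximum principle and the weak Harnack inequality in non-divergence form --- is exactly what \cite{GT} provides, so no original estimate is needed beyond this clean rescaling reduction. The analogous argument applied with $f \equiv 0$ recovers \cref{l:harnack}; the $L^p$ mean-value refinements stated there follow by combining the weak Harnack inequality with a standard Moser iteration argument.
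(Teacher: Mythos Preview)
Your reduction is correct and is precisely the standard way to extract this lemma from the Gilbarg--Trudinger non-divergence Harnack inequality; the paper itself gives no proof and simply cites \cite{GT}, so your scaling argument is exactly the implicit derivation the authors have in mind. The one minor remark is that in the second assertion the factor $R/\varepsilon \leq 1/L$ is absorbed into the constant $C$, which then depends on $L$ as well as $n$---this is harmless since $L$ is part of the fixed data.
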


The Harnack inequality in \cref{l:harnack} holds also in the case of the punctured ball.

\begin{lemma}\label{lem:punctured_harnack}
  Let $u \in C^2(B_{3R}(x) \setminus \{x\})$ be a non-negative function satisfying $L_\varepsilon u = 0$ in $B_{3R}(x) \setminus \{x\}$. Then
  \begin{align*}
    \sup_{\partial B_R(x)} u \leq C \inf_{\partial B_R(x)} u
  \end{align*}
  for a  constant $C= C(n,\nu R)$.
\end{lemma}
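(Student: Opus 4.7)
The approach is a standard chaining argument. Since $x$ is excluded, we cannot apply the full-ball Harnack inequality \cref{l:harnack} on $B_R(x)$ directly, but the punctured ball still contains a neighbourhood of the sphere $\partial B_R(x)$ on which the ordinary Harnack inequality is applicable at an appropriate scale.

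The plan is as follows. First, I would fix $r = R/4$ and cover the compact connected sphere $\partial B_R(x)$ by finitely many balls $B_{r}(y_1), \dots, B_{r}(y_N)$, with centres $y_i \in \partial B_R(x)$, such that consecutive balls $B_{r}(y_i) \cap B_{r}(y_{i+1})$ are non-empty and the chain connects any two points of $\partial B_R(x)$. By rescaling the covering of the unit sphere, one sees that $N$ can be chosen to depend only on the dimension $n$. Each enlarged ball $B_{3r}(y_i)$ satisfies $B_{3r}(y_i) \subset B_{3R}(x) \setminus \{x\}$, since every $z \in B_{3r}(y_i)$ obeys
\begin{equation*}
  \tfrac{R}{4} = R - 3r \leq |z - x| \leq R + 3r = \tfrac{7R}{4} < 3R.
\end{equation*}

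Second, on each $B_{3r}(y_i)$ the function $u$ is a non-negative solution of $L_\varepsilon u = 0$, so \cref{l:harnack} applies and yields
\begin{equation*}
  \sup_{B_{r}(y_i)} u \leq C_0 \inf_{B_{r}(y_i)} u,
\end{equation*}
with $C_0 = C_0(n, \nu R)$ (using $\nu r \leq \nu R$). Given any two points $p, q \in \partial B_R(x)$, we select a chain $B_{r}(y_{i_1}), \dots, B_{r}(y_{i_m})$ with $p \in B_{r}(y_{i_1})$, $q \in B_{r}(y_{i_m})$, $m \leq N$, and with consecutive balls overlapping. Picking a point $w_j \in B_{r}(y_{i_j}) \cap B_{r}(y_{i_{j+1}})$, repeated application of the inequality above gives
\begin{equation*}
  u(p) \leq C_0 \, u(w_1) \leq C_0^2 \, u(w_2) \leq \dots \leq C_0^{N} \, u(q).
\end{equation*}

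Taking the supremum over $p$ and the infimum over $q$ on $\partial B_R(x)$ yields the claim with $C = C_0^{N}$, a constant depending only on $n$ and $\nu R$. There is no real obstacle here; the only point requiring care is to verify that the number $N$ of balls in the covering can be chosen independently of $R$ (which follows by scaling invariance of the covering of the unit sphere), so that the final constant retains the required form $C = C(n, \nu R)$.
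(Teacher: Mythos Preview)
Your proof is correct and follows essentially the same chaining argument as the paper: both cover $\partial B_R(x)$ by balls of radius $R/4$ centred on the sphere, verify these balls (and their triples) avoid the puncture, apply \cref{l:harnack} on each, and iterate along a chain of overlapping balls of bounded length depending only on $n$. Your write-up is in fact slightly more careful in explicitly verifying the inclusion $B_{3r}(y_i)\subset B_{3R}(x)\setminus\{x\}$.
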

\begin{proof}
  By translating the coordinates we may assume that $x = 0$. Let $x_0, y_0 \in \partial B_R$ be such that $\sup_{\partial B_R(x)} u = u(x_0)$ and $\inf_{\partial B_R(x)} u = u(y_0)$. We choose points $x_1, \dots, x_{N-1}, x_N \in \partial B_R$ such that $|x_i-x_{i-1}| \leq R/4$ and $x_N = y_0$. Note that the number $N$ is bounded.  Now we may use  Harnack's inequality \cref{l:harnack} in balls $B_{R/4}(x_i)$ to get
  \begin{align*}
    u(x_{i-1}) \leq C u(x_{i}).
  \end{align*}
  We obtain the claim by summing over $i=1, \dots, N$.
\end{proof}
\begin{lemma} \label{lem:harnack+}
  Let $\Omega$ be a domain and let $u,h \in C^2(\Omega)$ be non-negative functions such that $L_\eps u = h$ and  $h$ satisfies Harnack's inequality with constant $c_0$. Then the function $v = u+h$ satisfies Harnack's inequality, i.e.,   for all $B_{3R}(x) \subset \Omega$ it holds that
  \begin{align*}
    \sup_{B_R(x)} v \leq C \inf_{B_R(x)} v
  \end{align*}
  for a constant  $C = C(n,\nu R, R^2/\varepsilon,c_0)$. In particular, if $\|\nabla F\|_{L^\infty(\Omega)} \leq L$,  then for $R \leq \min\{ \varepsilon/L, \sqrt{\eps}\}$ the constant $C$ is independent of $\eps$.
\end{lemma}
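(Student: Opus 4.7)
The plan is to apply the non-homogeneous maximum principle from \cref{l:maximum_principle} to $u$, use the Harnack inequality assumed for $h$ to replace the right-hand side norm by a pointwise infimum, and then combine the two into a Harnack estimate for $v = u+h$.

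First, since $L_\eps u = h \geq 0$ and $u \geq 0$, \cref{l:maximum_principle} applied on $B_{3R}(x) \subset \Omega$ gives
\begin{align*}
  \sup_{B_R(x)} u \leq C_1 \left( \inf_{B_R(x)} u + \frac{R}{\eps} \|h\|_{L^n(B_{2R}(x))} \right),
\end{align*}
with $C_1 = C_1(n,\nu R)$. Under the assumption $R \leq \eps/L$ this constant is in fact independent of $\eps$.

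Next I would propagate the Harnack inequality for $h$ from $B_R(x)$ out to $B_{2R}(x)$ by a standard chaining argument: cover the annular region by a bounded number (depending only on $n$) of balls $B_{R/4}(y_j)$ with $B_{3\cdot R/4}(y_j) \subset B_{3R}(x)$, apply the hypothesis $\sup h \leq c_0 \inf h$ to each such ball, and pass the constant along the chain. This yields
\begin{align*}
  \sup_{B_{2R}(x)} h \leq C_2 \inf_{B_R(x)} h,
\end{align*}
with $C_2 = C_2(n, c_0)$. Consequently
\begin{align*}
  \|h\|_{L^n(B_{2R}(x))} \leq |B_{2R}|^{1/n} \sup_{B_{2R}(x)} h \leq C_3 R \inf_{B_R(x)} h,
\end{align*}
for a constant $C_3 = C_3(n, c_0)$. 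Plugging this into the previous estimate yields
\begin{align*}
  \sup_{B_R(x)} u \leq C_1 \inf_{B_R(x)} u + C_1 C_3 \frac{R^2}{\eps} \inf_{B_R(x)} h.
\end{align*}

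Finally, combine with the Harnack inequality for $h$ itself: adding the bound $\sup_{B_R(x)} h \leq c_0 \inf_{B_R(x)} h$ gives
\begin{align*}
  \sup_{B_R(x)} v
  \leq \sup_{B_R(x)} u + \sup_{B_R(x)} h
  \leq C_4 \left( 1 + \tfrac{R^2}{\eps} \right) \bigl( \inf_{B_R(x)} u + \inf_{B_R(x)} h \bigr),
\end{align*}
with $C_4 = C_4(n, \nu R, c_0)$. Since $u, h \geq 0$ we have $\inf_{B_R(x)} u + \inf_{B_R(x)} h \leq \inf_{B_R(x)} (u+h) = \inf_{B_R(x)} v$, which yields the desired Harnack inequality with constant $C = C(n, \nu R, R^2/\eps, c_0)$. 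The final assertion about $\eps$-independence follows because $R \leq \eps/L$ makes $\nu R$ bounded and $R \leq \sqrt{\eps}$ makes $R^2/\eps$ bounded.

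I do not see a serious obstacle: the only non-trivial ingredient is the chaining step that transfers Harnack from $B_R(x)$ to $B_{2R}(x)$, which is routine but needs the balls to lie inside $B_{3R}(x)$ so that the hypothesis on $h$ applies.
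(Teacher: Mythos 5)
Your proof is correct and follows essentially the same route as the paper: apply \cref{l:maximum_principle} to $u$, control $\|h\|_{L^n(B_{2R})}$ by $\inf_{B_R} h$ via Harnack for $h$, and add the Harnack bound for $h$ itself. The chaining step you spell out (propagating Harnack for $h$ from $B_R$ to $B_{2R}$ through balls of radius $R/4$ whose triple dilates stay inside $B_{3R}$) is in fact needed because the hypothesis only gives Harnack on balls whose triple dilate lies in $\Omega$; the paper passes over this silently when it writes $\|h\|_{L^n(B_{2R})} \leq |B_R|^{1/n}\inf_{B_R} h$, so your version is the same argument with that small gap made explicit.
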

\begin{proof}
  Again we may assume that $x = 0$. Using \cref{l:maximum_principle} and Harnack's inequality for $h$ yields
  \begin{align*}
    \sup_{B_R} u &\leq C \inf_{B_R} u + \frac{C R}{\varepsilon} \|h\|_{L^n(B_{2R})} \\
    &\leq C \inf_{B_R} u + \frac{C R}{\varepsilon} |B_R|^{1/n} \inf_{B_{R}}  h\\
    &\leq C \inf_{B_R} u + \frac{C R^2}{\varepsilon} \inf_{B_{R}}  h.
  \end{align*}
  Now, using Harnack's inequality for $h$ again, we obtain
  \begin{align*}
    \sup_{B_R} v &\leq \sup_{B_R} u + \sup_{B_R} h \leq C \inf_{B_R} u + C \inf_{B_R} h\leq  C \inf_{B_R} v
  \end{align*}
  for a constant $C$ as in  the statement. This proves the claim.
\end{proof}

The Harnack's inequality in \cref{l:harnack}  implies H\"older continuity for solutions of $L_\eps u = 0$.

\begin{lemma}\label{lem:holder}
  Let $u \in C^2(B_{3R}(x))$ be a  function such that for any constant $c$, for which $v = u+c$ is non-negative, the function $v$  satisfies Harnack's inequality  with constant $C_0$, independent of $c$. Then there exists $C = C(C_0) > 1$ and $\alpha = \alpha(C_0) \in (0,1)$ such that, for all  $\rho \leq R$, it holds that
  \begin{align*}
    \osc_{B_\rho(x)} u \leq C \left ( \frac{\rho}{R}\right )^\alpha \osc_{B_{R}(x)} u.
  \end{align*}
  In particular,  if  $u,h \in C^2(\Omega)$ are non-negative functions such that $L_\eps u = h$ and  $h$ satisfies Harnack's inequality with constant $C_0$, then $u+h$ satisfies the 			estimate above.
\end{lemma}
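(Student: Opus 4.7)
The plan is the classical oscillation decay argument derived from Harnack's inequality. Fix $B = B_R(x)$, and for $0 < r \leq R$ set $M_r = \sup_{B_r(x)} u$ and $m_r = \inf_{B_r(x)} u$. The first goal is the geometric decay $\osc_{B_{R/3}} u \leq \theta\, \osc_{B_R} u$ for some $\theta = \theta(C_0) \in (0,1)$, from which the H\"older estimate follows by iteration.

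To establish this, I would apply Harnack's inequality to the two non-negative functions $v_1 = u - m_R$ and $v_2 = M_R - u$ on the smaller ball $B_{R/3}(x)$. The hypothesis covers $v_1$ directly as the translate $u + (-m_R) \geq 0$, and $v_2 = -u + M_R$ by the analogous property for $-u$ (which is automatic in the linear setting where the lemma is applied: if $L_\eps u = 0$, then also $L_\eps(-u) = 0$ and both satisfy the same Harnack inequality with constant $C_0$). This yields
\begin{align*}
  M_{R/3} - m_R \leq C_0(m_{R/3} - m_R), \qquad M_R - m_{R/3} \leq C_0(M_R - M_{R/3}).
\end{align*}
Summing and rearranging gives $(C_0+1)(M_{R/3} - m_{R/3}) \leq (C_0-1)(M_R - m_R)$, i.e., $\osc_{B_{R/3}} u \leq \theta\, \osc_{B_R} u$ with $\theta = (C_0-1)/(C_0+1) \in (0,1)$. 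Iterating yields $\osc_{B_{R/3^k}} u \leq \theta^k \osc_{B_R} u$ for every $k \geq 0$; for arbitrary $\rho \leq R$, choosing the smallest $k$ with $R/3^{k+1} < \rho$ produces
\begin{align*}
  \osc_{B_\rho} u \leq \osc_{B_{R/3^k}} u \leq \theta^k \osc_{B_R} u \leq \theta^{-1}\left(\frac{\rho}{R}\right)^\alpha \osc_{B_R} u,
\end{align*}
with $\alpha = \log(1/\theta)/\log 3$, giving the estimate with $C = \theta^{-1}$.

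The main obstacle is justifying Harnack's inequality for both non-negative shifts $v_1$ and $v_2$ with the same constant $C_0$; in the linear setting this symmetry is automatic, but the hypothesis as stated must be read accordingly. For the second (``in particular'') claim, I would apply the first part to $\tilde u = u + h$ after verifying its hypothesis via \cref{lem:harnack+}: any shift $\tilde u + c$ with $c \geq 0$ can be rewritten as $(u + c) + h$ with both pieces non-negative and $L_\eps(u + c) = h$, so \cref{lem:harnack+} yields Harnack for $\tilde u + c$ with a uniform constant depending only on $C_0, n, \nu R, R^2/\eps$, and an analogous splitting handles the shifts needed for the oscillation step. The iteration above then gives the H\"older estimate for $u + h$.
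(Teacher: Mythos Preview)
Your proposal is correct and is precisely the argument the paper has in mind: the paper's own proof consists of the single sentence ``The proof follows verbatim from the classical proof of Moser, see \cite[Theorem 8.22]{GT},'' and what you have written is exactly that classical oscillation-decay argument. You go further than the paper by spelling out the iteration and by flagging that the hypothesis as literally stated (Harnack for nonnegative shifts $u+c$) must also be read as covering $M_R-u$; your remark that this symmetry is automatic in the linear setting is the correct resolution, and the paper implicitly relies on it.
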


\begin{proof}
  The proof follows verbatim from the classical proof of Moser, see \cite[Theorem 8.22]{GT}.
\end{proof}

\section{Technical lemmas} \label{sec:technical}

In this section we provide some preliminary results for the proofs of the main theorems. We recall that we assume that the potential $F$ satisfies the structural assumptions from
\cref{sec:ass}, and that from this moment on our constants are allowed to depend on the data, see paragraph after \cref{e.etahat}.

\subsection{Rough estimates for potentials}
In this subsection we provide estimates for the capacitary potential $h_{A,B}$, when $A$ and $B$ are two disjoint closed sets.
The first estimate is the so called renewal estimate of \cite{BGK}. In order to trace dependencies of constants, we provide a proof.
\begin{lemma} \label{lem:replacement}
  Let $\Omega$ be a smooth domain, let $A, B \subset \Omega$ be disjoint smooth sets, and consider $h_{A,B}$ as the capacitary potential in $\Omega$. Assume that $B_{4\varrho}(x) \subset (\Omega \setminus (A \cup B))$, and that $r \leq \min\left \{ \frac{\varepsilon}{\|\nabla F\|_{L^\infty(B_{2\varrho}(x))}}, \varrho \right \}$.
  Then there exists a constant $C = C(n,\nu) > 1$ such that
  \begin{align*}
    h_{A,B}(x) &\leq C
    \frac{\capacity(B_{r}(x),A;\Omega)}{\capacity(B_{r}(x),B;\Omega)}.
  \end{align*}
\end{lemma}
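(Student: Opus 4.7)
This is the classical renewal-type bound of Bovier-Gayrard-Klein \cite{BGK}. My strategy is to combine a Harnack smoothing step at scale $r$ with a potential-theoretic calculation that isolates the ratio of capacities.

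\emph{Step 1 (Harnack smoothing).} The hypothesis $B_{4\varrho}(x)\subset\Omega\setminus(A\cup B)$ together with $r\leq\eps/\|\nabla F\|_{L^\infty(B_{2\varrho}(x))}$ places us exactly in the regime where \cref{l:harnack} delivers a Harnack constant independent of $\eps$ on balls of radius $r\leq\varrho$. Since $h_{A,B}$ is nonnegative and $L_\eps$-harmonic on $B_{2\varrho}(x)$, the chaining argument of \cref{lem:punctured_harnack} yields $h_{A,B}(x)\leq C\,h_{A,B}(y)$ uniformly for $y\in\partial B_r(x)$. Integrating this pointwise bound against the equilibrium measure $\mu_{B_r(x),B}$, which by \cref{l:capacity} is supported on $\partial B_r(x)$ and has total weighted mass $\int e^{-F/\eps}d\mu_{B_r(x),B}=\operatorname{cap}(B_r(x),B;\Omega)$, gives
\begin{equation*}
h_{A,B}(x)\,\operatorname{cap}(B_r(x),B;\Omega)\;\leq\;C\int_{\partial B_r(x)}h_{A,B}(y)\,e^{-F(y)/\eps}\,d\mu_{B_r(x),B}(y).
\end{equation*}

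\emph{Step 2 (duality and capacity extraction).} Using the potential representation $h_{A,B}(y)=\int G_{\Omega\setminus B}(y,w)\,d\mu_{A,B}(w)$ from \cref{l:capacity}, Fubini, and the Green's-function symmetry of \cref{r:symmetry}, the right-hand side rewrites as $C\int h_{B_r(x),B}(w)\,d\mu_{A,B}(w)$. The remaining task is to bound this by $C\,\operatorname{cap}(B_r(x),A;\Omega)$. My approach would be to apply Green's second identity \cref{eq:green2} to the pair $(h_{B_r(x),A},h_{B_r(x),B})$ on $\Omega\setminus(A\cup B\cup B_r(x))$, noting that both functions are $L_\eps$-harmonic there; the boundary terms on $\partial B_r(x)$ collapse (both functions equal $1$ there) and combine via the flux identifications in \cref{eq:measure_energy_equality} into the difference $\operatorname{cap}(B_r(x),A;\Omega)-\operatorname{cap}(B_r(x),B;\Omega)$, while the $\partial A$ and $\partial B$ contributions are controlled by Harnack-type comparisons of $h_{B_r(x),B}$ against $h_{B_r(x),A}$. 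Rearranging these identities and substituting back into the inequality of Step~1 produces the claimed bound.

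\emph{Main obstacle.} The chief technical difficulty is the sharp estimate of $\int h_{B_r(x),B}(w)\,d\mu_{A,B}(w)$: the crude pointwise bound $h_{B_r(x),B}\leq 1$ only produces the wrong capacity $\operatorname{cap}(A,B;\Omega)$, which in general is much larger than $\operatorname{cap}(B_r(x),A;\Omega)$. The gain comes from the decay of $h_{B_r(x),B}$ away from $B_r(x)$, captured rigorously through the reciprocity of Green's second identity \cref{eq:green2} and careful sign book-keeping on the three boundary pieces $\partial A,\partial B,\partial B_r(x)$ (together with the vanishing fluxes $\int_{\partial A}\nabla h_{B_r(x),B}\cdot n\,d\sigma_\eps = 0$ coming from Green's first identity \cref{eq:green1} applied inside $A$). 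The scale restriction $r\leq\eps/\|\nabla F\|_{L^\infty}$ enters precisely to guarantee $\eps$-uniform Harnack constants in both Step~1 and the auxiliary comparisons of Step~2.
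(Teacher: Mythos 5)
Your Step~1 is fine and is a genuine alternative to the paper's opening move: the paper does not apply Harnack to $h_{A,B}$ at all, but instead represents $h_{A,B}(z)$ for $z\in\overline{B_r}$ as $\int_{\partial B_r}G_{\Omega\setminus(A\cup B)}(y,z)\,e^{(F(z)-F(y))/\eps}\,d\mu_{B\cup B_r,A}(y)$ via the replacement identity and the balayage formula, and the Harnack ingredient enters only through \cref{lem:punctured_harnack} applied to $G_{\Omega\setminus(A\cup B)}(\cdot,z)$ on $\partial B_r$. The crucial trick there, which your outline is missing, is that $\mu_{B\cup B_r,A}\lvert_{\partial B_r}$ is admissible for $\capacity(B_r,A;\Omega)$ (and $1=h_{B_r,A\cup B}$ together with $\capacity(B_r,A\cup B;\Omega)\ge\capacity(B_r,B;\Omega)$ supplies the lower bound on the Green's function). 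This is what extracts the numerator and denominator of the capacity ratio cleanly.

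Step~2, by contrast, has a genuine gap. After Fubini and \cref{r:symmetry}, the right-hand side of your Step~1 inequality equals $\int_{A}h_{B_r(x),B}(w)\,e^{-F(w)/\eps}\,d\mu_{A,B}(w)$ (note the weight $e^{-F(w)/\eps}$, which your sketch dropped). But Green's second identity applied to the pair $\bigl(h_{B_r(x),A},\,h_{B_r(x),B}\bigr)$ on $\Omega\setminus(A\cup B\cup B_r(x))$ produces, besides the $\partial B_r(x)$ contribution $\capacity(B_r(x),A;\Omega)-\capacity(B_r(x),B;\Omega)$, the terms
\begin{equation*}
-\int_{\partial A}h_{B_r(x),B}\,d\tilde\mu_{A,B_r(x)}
\quad\text{and}\quad
\int_{\partial B}h_{B_r(x),A}\,d\tilde\mu_{B,B_r(x)},
\end{equation*}
i.e.\ they are paired with the equilibrium measures of $A$ and $B$ relative to $B_r(x)$, not with $\mu_{A,B}$. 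There is no Harnack-type comparison between $\mu_{A,B}$ and $\mu_{A,B_r(x)}$: they are measures on the possibly large set $\partial A$ with total weighted masses $\capacity(A,B;\Omega)$ and $\capacity(A,B_r(x);\Omega)$, which can differ by arbitrary exponential factors in $\eps$. The ``vanishing flux'' observation $\int_{\partial A}\nabla h_{B_r(x),B}\cdot n\,d\sigma_\eps=0$ concerns a different boundary integral than the one appearing in Green's second identity (there the relevant term is $\int_{\partial A}h_{B_r(x),B}\,\nabla h_{B_r(x),A}\cdot n\,d\sigma_\eps$), so it does not make the unwanted terms disappear. Also, the crude bound $h_{B_r(x),B}\leq1$ gives only $\capacity(A,B;\Omega)$, which you correctly identify as too large; but the sketched mechanism for getting the finer bound does not close. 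To rescue this route you would need a genuinely new ingredient, essentially the admissibility of $\mu_{B\cup B_r,A}\lvert_{\partial B_r}$ used in the paper, or an equivalent comparison of the Green's function with the two capacities as in \cref{e:habcap,e:greenlower}.
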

\begin{proof}
  Again, without loss of generality, we may assume that $x = 0$. Since $h_{A,B \cup B_r} = h_{A,B}$ on $\partial (\Omega \setminus (A \cup B))$, we can use \cref{eq:representation} to represent $h_{A,B}$ as follows
  \begin{align} \label{e.hab.1}
    h_{A,B}(z) = \varepsilon e^{F(z)/\varepsilon} \int_{\partial (\Omega \setminus (A \cup B))} h_{A,B \cup B_r} \nabla G_{\Omega \setminus (A \cup B)}(y,z) \cdot n d\sigma_\varepsilon(y).
  \end{align}
  Now by Green's second identity \cref{eq:green2} in $\Omega \setminus (A \cup B \cup B_r)$ and \cref{e.hab.1} we see that, for $z \in \Omega$,
  \begin{multline} \label{e.hab.2}
    h_{A,B}(z) = h_{A,B \cup B_r}(z) \\
    - \varepsilon e^{F(z)/\varepsilon} \int_{\partial B_r} G_{\Omega \setminus (A \cup B)}(y,z) \nabla h_{A,B \cup B_r}(y) \cdot n d\sigma_\varepsilon(y),
  \end{multline}
  where $n$ is the inward unit normal of $B_r$.
  First note that by \cref{eq:green2} we can identify the equilibrium measure as
  \begin{align*}
    \mu_{B \cup B_r,A} = -\varepsilon \nabla h_{B \cup B_r,A} \cdot n d\sigma_\varepsilon =
    \varepsilon \nabla h_{A,B \cup B_r} \cdot n d\sigma_\varepsilon.
  \end{align*}
  Using that $h_{A,B\cup B_r} = 1-h_{B \cup B_r, A}$, together with the above and \cref{e.hab.2}, we get for $z \in \overline{B_r}$ (since $h_{A, B \cup B_r}(z) = 0$) that
  \begin{align}\label{e.hab.3}
    h_{A,B}(z) &= \int_{\partial B_r} G_{\Omega \setminus (A \cup B)}(y,z) e^{(F(z)-F(y))/\varepsilon} d\mu_{B \cup B_r,A}(y).
  \end{align}

  First note that $\mu_{B_r \cup B,A} \lvert_{\partial B_r}$ is an admissible measure for $\capacity(B_r,A;\Omega)$, which follows from the fact that by the comparison principle, the potentials for ordered measures are ordered and the support of $\mu_{B_r \cup B,A} \lvert_{\partial B_r}$ is in $\overline{B_r}$.
  To bound $h_{A,B}$ from above, note that by the balayage representation of capacity (see \cref{l:capacity}) and the above, we obtain
  \begin{align*}
    \int_{\partial B_r} e^{-F(y)/\varepsilon} d\mu_{B \cup B_r,A}(y) \leq \capacity(B_r,A;\Omega).
  \end{align*}
  Applying the above to \cref{e.hab.3} gives, for $z \in B_r$,
  \begin{align}\label{e:habcap}
    h_{A,B}(z) \leq \sup_{y \in \partial B_r} G_{\Omega \setminus (A \cup B)}(y,z) e^{F(z)/\varepsilon}\capacity(B_r,A;\Omega).
  \end{align}
  It remains to bound the Green's function.
  For $z \in \overline{B_r}$ we have by \cref{eq:representation,r:symmetry,eq:measure_energy_equality} that
  \begin{equation} \label{e:greenlower}
    \begin{aligned}
      1 &= h_{B_r,A \cup B}(z) = \int_{\partial B_r} G_{\Omega \setminus (A \cup B)}(z,y)  d\mu_{B_r,A \cup B}(y) \\
      &= \int_{\partial B_r} G_{\Omega \setminus (A \cup B)}(y,z) e^{(F(z) -F(y))/\varepsilon} d\mu_{B_r,A \cup B}(y) \\
      &\geq \inf_{\partial B_r} G_{\Omega \setminus (A \cup B)}(y,z) e^{F(z)/\varepsilon} \capacity(B_r,A \cup B;\Omega)
      \\
      &\geq \inf_{\partial B_r} G_{\Omega \setminus (A \cup B)}(y,z) e^{F(z)/\varepsilon} \capacity(B_r,B;\Omega).
    \end{aligned}
  \end{equation}
  %
  %
  Now putting together \cref{e:habcap,e:greenlower,lem:punctured_harnack} we are done.
\end{proof}

The result below is a version of the rough capacity bound of \cite{BGK}, but we give a simplified proof. We will later use a similar argument in the proof of \cref{thm1}.

\begin{lemma} \label{l:capacity:rough}
  Let $D \subset B_R$ be a smooth closed set. Let $x \in B_R \setminus D$ be such that $B_{4\rho}(x) \subset B_R \setminus D$, for $\rho \leq \varepsilon$. Then there exists constants $q_1,q_2 \in \R$ and $C > 1$ such that
  \begin{align*}
    \frac{1}{C}\varepsilon^{q_1} \rho^{n-1} e^{-F(x;D)/\varepsilon} \leq \capacity{B_\rho(x)}{D} \leq C \varepsilon \rho^{q_2} e^{-F(x;D)/\varepsilon}.
  \end{align*}
\end{lemma}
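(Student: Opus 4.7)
The plan is to prove the upper and lower bounds separately. Both arguments exploit the definition of $F(x;D)$ as an infimum of path-maxima and employ the variational characterization of capacity from \cref{e:cap:precise}.

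\textbf{Upper bound.} I will construct an explicit admissible test function for $\capacity{B_\rho(x)}{D}$. Let $\gamma : [0,L] \to \R^n$ be a smooth arc-length parametrized path with $\gamma(0) = x$, $\gamma(L) \in D$, $L \leq \diam(B_R)$, and $\max_{t \in [0,L]} F(\gamma(t)) \leq F(x;D) + \varepsilon$; such a path exists by the definition of the communication height. Let $t_* \in [0,L]$ be a point where this maximum is essentially attained. In tube coordinates $(s,z)$ around $\gamma$, I define $u(s,z) = \phi(s)\,\eta(|z|)$, where $\phi : [0,L] \to [0,1]$ is smooth with $\phi \equiv 1$ for $s \leq t_* - \rho/2$ and $\phi \equiv 0$ for $s \geq t_* + \rho/2$, and $\eta : [0,2\rho] \to [0,1]$ is a radial cutoff with $\eta \equiv 1$ on $[0,\rho]$, supported only where $\phi \in (0,1)$. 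I extend $u$ by $1$ on $B_\rho(x)$ and by $0$ elsewhere, so that the entire support of $\nabla u$ lies in an $O(\rho)$-neighborhood of $\gamma(t_*)$. On that support $|\nabla u| \leq C/\rho$ and the volume is $O(\rho^n)$; using $\|\nabla F\|_{L^\infty(B_R)} \leq C$ and $\rho \leq \varepsilon$ we have $F \leq F(x;D) + C\varepsilon$ there, so $e^{-F/\varepsilon} \leq C e^{-F(x;D)/\varepsilon}$. Combining gives the Dirichlet energy bound $C\varepsilon\rho^{n-2}e^{-F(x;D)/\varepsilon}$, which yields the upper inequality with $q_2 = n-2$.

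\textbf{Lower bound.} Let $u$ be any admissible function in the variational definition. The plan is to apply the Cauchy--Schwarz inequality along a family of paths. I will construct a smooth family $\{\gamma_z\}_{z \in Z}$ with $Z \subset \R^{n-1}$ of $(n-1)$-measure at least $c\rho^{n-1}$, such that each $\gamma_z : [0,L_z] \to \R^n$ is arc-length parametrized, $\gamma_z(0) \in B_\rho(x)$, $\gamma_z(L_z) \in D$, $L_z \leq C$, and $\max_t F(\gamma_z(t)) \leq F(x;D) + C\varepsilon$, and such that the map $\Phi(t,z) = \gamma_z(t)$ is a bi-Lipschitz diffeomorphism onto its image with Jacobian bounded below by a positive constant. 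Along each $\gamma_z$, Cauchy--Schwarz gives
\begin{align*}
  1 = \bigl(u(\gamma_z(0)) - u(\gamma_z(L_z))\bigr)^2 \leq \int_0^{L_z} |\nabla u|^2 e^{-F/\varepsilon}\, dt \cdot \int_0^{L_z} e^{F/\varepsilon}\, dt,
\end{align*}
and the rightmost factor is bounded by $Ce^{F(x;D)/\varepsilon}$ using the bound on $\max F \circ \gamma_z$. Integrating the leftmost factor over $z \in Z$ against the Jacobian and invoking Fubini yields
\begin{align*}
  \int_{\R^n} |\nabla u|^2 e^{-F/\varepsilon}\, dy \geq c\, \rho^{n-1} e^{-F(x;D)/\varepsilon}.
\end{align*}
Multiplying by $\varepsilon$ gives the lower inequality with $q_1 = 1$.

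\textbf{Main obstacle.} The principal technical difficulty is producing the path family $\{\gamma_z\}$ for the lower bound (and the localized transverse cutoff for the upper bound) so that the Jacobian of $\Phi$ is uniformly controlled across a cross section of measure comparable to $\rho^{n-1}$. Near a possibly degenerate saddle of $F$ on the reference path, paths $\gamma_z$ obtained by translation can be pushed out of the admissible low-$F$ region. The hypothesis $\rho \leq \varepsilon$ rescues the argument: the admissible channel near the saddle has width at least of order $\varepsilon$ by the structural assumption \cref{eq:struc3} together with the bound on $\|\nabla F\|_{L^\infty(B_R)}$, and this exceeds $\rho$, so translates of a reference path by vectors in $B_\rho$ remain within the admissible region. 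This allows $\Phi$ to be built as a near-identity perturbation with Jacobian close to $1$. Since $q_1$ and $q_2$ are free real constants, any residual inefficiency in the construction can be absorbed into the polynomial factors $\varepsilon^{q_1}$ and $\rho^{q_2}$.
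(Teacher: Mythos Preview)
Your lower bound is essentially the paper's: Cauchy--Schwarz along translates of a near-optimal path by vectors in an $(n-1)$-disk of radius $\rho$. You do not need \cref{eq:struc3} for this lemma; Lipschitz continuity of $F$ on $B_R$ together with $\rho \leq \varepsilon$ already keeps $F$ on each translate within $O(\varepsilon)$ of its values on the reference path, and any overcounting from self-intersections of the tube is absorbed into the free power $\varepsilon^{q_1}$.

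Your upper bound has a genuine gap. You assert that $\operatorname{supp}(\nabla u)$ can be confined to an $O(\rho)$-neighborhood of the single point $\gamma(t_*)$, but in dimension $n \geq 2$ this is impossible when $F(x;D) > F(x)$: if $\nabla u$ vanishes outside a small ball $B$ then $u$ is constant on the connected set $\R^n \setminus B$, yet it must equal $1$ on $B_\rho(x)$ and $0$ on $D$, both of which lie outside $B$. Concretely, on the initial segment of your tube (where $\phi \equiv 1$) the function $u = \phi(s)\eta(|z|)$ reduces to $\eta(|z|)$, so the radial cutoff forces a gradient of size $1/\rho$ along the entire lateral boundary of the tube from $s=0$ to $s \approx t_*$. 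That lateral strip runs through points where $F$ may be as small as $F(x)$, and the resulting energy contribution is of order $e^{-F(x)/\varepsilon}$, which is not controlled by $e^{-F(x;D)/\varepsilon}$. The paper's fix is to abandon the tube and instead take the component $\hat D_1$ of the sublevel set $\{F \leq F(x;D)\}$ meeting $D$, then use a width-$\rho$ cutoff around $\hat D_1 \cup D$; on the resulting shell Lipschitzness gives $F \geq F(x;D) - C\rho$, so the weight is under control. The missing idea is that the transition region of the test function must be a codimension-one set \emph{separating} $B_\rho(x)$ from $D$ on which $F$ is uniformly close to $F(x;D)$, and a sublevel-set boundary provides this while the wall of a thin tube does not.
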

\begin{proof}
  We assume without loss of generality that $F(B_\rho;D) = 0$, since the quantities can always be scaled back.
  Consider $\gamma \in \mathcal{C}(B_\rho(x),D;B_R)$ (i.e. a curve connecting $B_\rho(x)$ and $D$ inside $\Omega$) such that $\sup_t F(\gamma(t)) \leq C\eps$ and let $u(z) = h_{D,B_\rho(x)}(z)$. We first note by \cref{l:capacity} that
  \begin{align*}
    \capacity(B_\rho(x),D) = \varepsilon \int |\nabla u|^2 e^{-F(y)/\varepsilon} dy.
  \end{align*}
  Fix an $n-1$ dimensional disk $D_\rho$ of radius $\rho$. Then by Cauchy-Schwarz
  \begin{align*}
    \int_{\Omega} |\nabla u|^2 e^{-F(y)/\varepsilon} dy
    \geq \int_{0}^1 \int_{D_\rho} \left |\left \langle \frac{\dot \gamma}{|\dot \gamma|} , \nabla u(\gamma(t)+z) \right \rangle \right |^2 |\dot \gamma| d\sigma_\varepsilon(z)dt.
  \end{align*}

  By the fundamental theorem of calculus and Cauchy-Schwarz, we have for a fixed point $z \in D_\rho$ that
  \begin{align*}
    1
    &=
    u(\gamma(1)) - u(\gamma(0))
    =
    \int_0^1 \frac{d}{dt} u(\gamma(t)+z)dt
    \\
    &=
    \int_0^1 \frac{d}{dt} u(\gamma(t)+z)\frac{\sqrt{|\dot \gamma|}}{\sqrt{|\dot \gamma|}} e^{-F(\gamma(t)+z)/(2\varepsilon)}e^{F(\gamma(t)+z)/(2\varepsilon)}dt
    \\
    &\leq
    \left (\int_0^1 |\frac{d}{dt} u(\gamma(t)+z)|^2 \frac{1}{|\dot \gamma|}e^{-F(\gamma(t)+z)/\varepsilon} dt \right )^{1/2} \left (\int |\dot \gamma| e^{F(\gamma(t)+z)/\varepsilon}dt \right )^{1/2}.
  \end{align*}
  From the above we get
  \begin{align*}
    \int_{\Omega} |\nabla u|^2 e^{-F(y)/\varepsilon} dy
    &\geq
    \int_{0}^1 \int_{D_\rho} \left |\left \langle \frac{\dot \gamma}{|\dot \gamma|} , \nabla u(\gamma(t)+z) \right \rangle \right |^2 |\dot \gamma| d\sigma_\varepsilon(z)dt
    \\
    &\geq
    \int_{D_\rho} \left ( \int_0^1 |\dot \gamma| e^{F(\gamma(t)+z)/\varepsilon} dt \right )^{-1}  d\sigma_\varepsilon(z).
  \end{align*}
  Now since $F$ is Lipschitz in $B_R$ and $F(B_\rho;D) = 0,$ we know that there exists a constant $C(\gamma)$ such that, for $z \in D_\rho$ and $\rho < 2\varepsilon$,
  \begin{align} \label{e.gammaeF}
    \int_0^1 |\dot \gamma| e^{F(\gamma(t)+z)/\varepsilon} dt \leq C(\gamma).
  \end{align}
  In the above the constant $C$ depends on the length of $\gamma$, which can be assumed to be bounded. To see this, take an $\epsilon$ neighborhood of $\gamma$, $E_\eps$ and consider a set of balls $\bigcup_i B_\eps(y_i) \supset E_\eps$ such that $\bigcup_i B_{\eps}(y_i) \subset E_{C_1 \eps}$ (for some large $C_1$), the maximal number of such balls needed is $C_1 R^n/\epsilon^n$. If we construct a piecewise linear curve $\gamma_\eps$ connecting the center of each ball in the covering, this curve will be inside $E_{C_1\eps}$ and its length will be bounded by $2 C_1 R^n/\eps^{n-1}$.
  This newly constructed curve can be mollified to achieve a smooth curve without increasing the length by more than a factor. From the above and the Lipschitz continuity of $F$ it is clear that $\sup_t F(\gamma_\eps(t)) \leq C\eps$, and as such we can replace $\gamma$ with $\gamma_\eps$ in the above and get from \cref{e.gammaeF} that there is a constant $C > 1$ depending only on the data such that
  \begin{align*}
    \int_0^1 |\dot \gamma| e^{F(\gamma(t)+z)/\varepsilon} dt \leq \eps^{1-n} C.
  \end{align*}
  This implies that for a new constant $C$ we have
  \begin{align*}
    \int_{\Omega} |\nabla u|^2 e^{-F(y)/\varepsilon} dy \geq C \eps^{n-1} \rho^{n-1},
  \end{align*}
  which completes the proof of the lower bound after rescaling our potential $F$.

  To prove the upper bound we have two possible cases:
  In the case when $F(x;D) = F(x)$ we can take a cutoff function $\chi_{B_\rho(x)} \leq \phi \leq \chi_{B_{2\rho}(x)}$ where $|\nabla \phi| \leq C/\rho$ as a competitor in the variational formulation of capacity \cref{e:cap:precise}. Then
  \begin{align*}
    \int_{\Omega} |\nabla \phi|^2 e^{-F(y)/\varepsilon} dx
    =
    \int_{B_{2\rho}(x)} |\nabla \phi|^2 e^{-F(y)/\varepsilon} dx
    \leq
    C \rho^{n-2}.
  \end{align*}
  In the case where $F(x;D) > F(x)$, consider the set $\hat D = \{z \in B_R: F(z) \leq F(x;D)\}$ and let $\hat D_1$ be the component that intersects $D$. We set $\tilde D = (\hat D_1 \cup D) \setminus B_{4\rho}(x)$. By the Lipschitz continuity, we know that $\inf_{\tilde D} F > - C\rho$. We take $\chi_{\tilde D + B_{\rho}} \leq \phi \leq \chi_{\tilde D + B_{2\rho}}$, where $|\nabla \phi| \leq C/\rho$, and get
  \begin{align*}
    \int_{\Omega} |\nabla \phi|^2 e^{-F(y)/\varepsilon} dx
    &=
    \int_{(\tilde D + B_{2\rho}) \setminus \tilde D} |\nabla \phi|^2 e^{-F(y)/\varepsilon} dx
    \\
    &\leq
    C |(\tilde D + B_{2\rho}) \setminus \tilde D|.
  \end{align*}
  Again, the upper bound follows from rescaling the potential $F$ as in the case of the lower bound. This completes the whole proof.
\end{proof}

\begin{lemma} \label{l:hab:bound}
  Let $A,B \subset B_{R}$ be smooth disjoint sets, and let $x \in B_R$ be such that $B_\varepsilon(x) \subset B_R \setminus (A \cup B)$, $\varepsilon \in (0,1)$. Then, if $F(x;B) \leq F(x; A)$, there exists constants $q$ and $C$ such that
  \begin{align*}
    h_{A,B}(x) \leq C \varepsilon^q e^{-(F(x;A)-F(x;B))/\varepsilon}.
  \end{align*}
\end{lemma}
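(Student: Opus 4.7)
The natural approach is to chain together the renewal estimate of \cref{lem:replacement} with the two-sided rough capacity bound of \cref{l:capacity:rough}. Concretely, the renewal estimate reduces pointwise control of $h_{A,B}(x)$ to the ratio of the two capacities $\capacity(B_r(x),A;\Omega)$ and $\capacity(B_r(x),B;\Omega)$, and then \cref{l:capacity:rough} supplies an upper bound for the numerator and a lower bound for the denominator. The exponential factors combine to give precisely $e^{-(F(x;A)-F(x;B))/\varepsilon}$, and all remaining $\varepsilon$ and $\rho$ factors collapse into a single $\varepsilon^q$ once $r$ is chosen proportional to $\varepsilon$. The sign assumption $F(x;B) \leq F(x;A)$ is exactly what makes the resulting exponent nonpositive, so the bound is meaningful.

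\textbf{Execution.} Since $\|\nabla F\|_{L^\infty(B_R)}$ is bounded by a constant $L$ depending only on the data, we choose $\varrho = \varepsilon/4$ and $r = c\varepsilon$ with $c := \min\{1/L, 1/4\}$. The hypothesis $B_\varepsilon(x) \subset B_R \setminus (A\cup B)$ guarantees $B_{4\varrho}(x) \subset B_R \setminus (A\cup B)$, so \cref{lem:replacement} applies with this choice and yields
\begin{equation*}
  h_{A,B}(x) \leq C\,\frac{\capacity(B_r(x),A;B_R)}{\capacity(B_r(x),B;B_R)}.
\end{equation*}
The same inclusion lets us invoke \cref{l:capacity:rough} (with $\rho = r \leq \varepsilon$) in both the numerator and denominator: the upper bound gives $\capacity(B_r(x),A) \leq C\varepsilon r^{q_2} e^{-F(x;A)/\varepsilon}$, while the lower bound gives $\capacity(B_r(x),B) \geq C^{-1} \varepsilon^{q_1} r^{n-1} e^{-F(x;B)/\varepsilon}$. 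Plugging $r=c\varepsilon$ and collecting powers,
\begin{equation*}
  h_{A,B}(x) \leq C\,\varepsilon^{1-q_1+q_2-n+1}\, e^{-(F(x;A)-F(x;B))/\varepsilon} = C\varepsilon^{q}\, e^{-(F(x;A)-F(x;B))/\varepsilon},
\end{equation*}
with $q = q_2 - q_1 - n + 2$, which is the claimed estimate.

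\textbf{Main point to verify.} The only thing that requires a little care is the domain on which the capacities are computed: \cref{l:capacity:rough} is stated for the capacity in $\R^n$, while the renewal estimate is phrased with a general domain $\Omega$. This is not a real obstacle because the upper-bound half of \cref{l:capacity:rough} is proved by exhibiting a compactly supported cutoff competitor, which is also admissible for $\capacity(\cdot,\cdot;B_R)$, and the lower-bound half is monotone under shrinking of the ambient domain; hence both inequalities transfer to the $B_R$-capacity that appears after applying \cref{lem:replacement}. Once this is noted, the argument is entirely mechanical.
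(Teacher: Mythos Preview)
Your proof is correct and follows exactly the paper's approach: combine \cref{lem:replacement} with the two-sided bound of \cref{l:capacity:rough}, choosing $r$ proportional to $\varepsilon$ (the paper writes simply ``combining \cref{lem:replacement,l:capacity:rough} with $R=\varepsilon$, $r=\min\{\varepsilon/L,\varepsilon\}$''). Your discussion of the domain issue for the capacities is a reasonable clarification that the paper leaves implicit.
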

\begin{proof}
  Let $L:=\|\nabla F\|_{L^\infty(B_{R})}$. By combining \cref{lem:replacement,l:capacity:rough} with $R = \varepsilon$, $r = \min\{\varepsilon/L,\varepsilon\}$ yields the result.
\end{proof}
\begin{remark}
  By relabeling $A,B$ to $B,A$ and using the fact that $h_{A,B} = 1-h_{B,A}$, we get that if the reverse inequality holds, i.e. $F(x;B) > F(x; A)$, then
  \begin{align*}
    1-h_{A,B}(x) \leq C \varepsilon^q e^{-(F(x;B)-F(x;A))/\varepsilon}.
  \end{align*}
\end{remark}

\begin{lemma} \label{l:hab:levelset:rough}
  Let $\Omega$ be a smooth domain and let $x_a,x_b \in \Omega \subset B_R$ be two local minimum points of $F$. Fix $0 < \delta < \delta_1$ and assume that $U_{-\delta/3} = \{ x: F(x) < F(x_a; x_b) - \delta/3\} \subset \Omega$.
  Then there exists an $\varepsilon_0 \in (0,1)$ and a constant $C = C > 1$ such that, for any $0 \leq \varepsilon \leq \varepsilon_0$ for which $B_{3 \varepsilon}(x_a),B_{3 \varepsilon}(x_b ) \subset U_{-\delta/3}$, the following holds:
  If $U_i$ is a component of $U_{-\delta/3}$, then
  \begin{align*}
    \underset{U_i}{\osc\,} h_{B_\varepsilon(x_a),B_\varepsilon(x_b)} \leq C\varepsilon.
  \end{align*}
\end{lemma}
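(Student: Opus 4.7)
The plan is to split into three cases according to whether $U_i$ contains $x_a$, $x_b$, or neither, and to combine the pointwise bound from \cref{l:hab:bound} with the maximum principle in the first two cases, and a global energy estimate combined with a Moser--Poincar\'e chaining argument in the third.

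Suppose first $U_i = U_a$. Then $h := h_{B_\varepsilon(x_a), B_\varepsilon(x_b)}$ is $L_\varepsilon$-harmonic on $U_a \setminus \overline{B_\varepsilon(x_a)}$ and equals $1$ on $\overline{B_\varepsilon(x_a)}$, so the maximum principle gives $\osc_{U_a} h \leq 1 - \min_{\partial U_a} h$. For $y \in \partial U_a$ the distance from $y$ to $x_a$ is bounded below uniformly in $\varepsilon$ (since $x_a$ is a proper local minimum), so $B_\varepsilon(y) \cap (B_\varepsilon(x_a) \cup B_\varepsilon(x_b)) = \emptyset$ for small $\varepsilon$. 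A path from $y$ to $B_\varepsilon(x_a)$ can be chosen to stay inside $U_a$, giving $F(y; B_\varepsilon(x_a)) \leq F(x_a;x_b) - \delta/3$, while any path from $y$ to $B_\varepsilon(x_b)$ must exit $U_{-\delta/3}$ and cross the saddle at height $F(x_a;x_b)$, so $F(y; B_\varepsilon(x_b)) \geq F(x_a;x_b)$. \cref{l:hab:bound} applied to $1 - h = h_{B_\varepsilon(x_b), B_\varepsilon(x_a)}$ then yields $1 - h(y) \leq C\varepsilon^q e^{-\delta/(3\varepsilon)} \leq C\varepsilon$ for $\varepsilon$ small, and combined with the maximum principle this gives $\osc_{U_a} h \leq C\varepsilon$. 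The case $U_i = U_b$ is symmetric.

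For $U_i \neq U_a, U_b$, the function $h$ is $L_\varepsilon$-harmonic on all of $U_i$, so the maximum principle reduces the problem to controlling the oscillation between interior points. Combining the identity in \cref{eq:measure_energy_equality},
\[
\varepsilon \int |\nabla h|^2 e^{-F/\varepsilon}\, dx = \capacity(B_\varepsilon(x_a), B_\varepsilon(x_b); \Omega),
\]
with the upper bound $\capacity(B_\varepsilon(x_a), B_\varepsilon(x_b); \Omega) \leq C\varepsilon^{q} e^{-F(x_a;x_b)/\varepsilon}$ from \cref{l:capacity:rough} and the lower bound $e^{-F/\varepsilon} \geq e^{-(F(x_a;x_b) - \delta/3)/\varepsilon}$ valid on $U_i$, we obtain the unweighted energy estimate
\[
\int_{U_i} |\nabla h|^2 \, dx \leq C\varepsilon^{q-1} e^{-\delta/(3\varepsilon)}.
\]
For interior points $y_1, y_2 \in U_i$, we connect them by a smooth path $\gamma \subset U_i$ of length $O(1)$, cover $\gamma$ by $N = O(\varepsilon^{-1})$ overlapping balls $B_\rho(p_j)$ with $\rho \sim \varepsilon$ and $B_{4\rho}(p_j) \subset U_i$, and apply the $L^\infty$--$L^2$ bound of \cref{l:harnack} combined with the Poincar\'e inequality to obtain $\osc_{B_\rho(p_j)} h \leq C\rho^{1-n/2}\|\nabla h\|_{L^2(B_{2\rho}(p_j))}$. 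Chaining these local bounds and applying Cauchy--Schwarz with the bounded-overlap property gives
\[
|h(y_1) - h(y_2)| \leq C \varepsilon^{-n/2} \|\nabla h\|_{L^2(U_i)} \leq C \varepsilon^{(q-n-1)/2} e^{-\delta/(6\varepsilon)},
\]
which is much smaller than $\varepsilon$ for $\varepsilon$ small; the extension to points near $\partial U_i$ follows from the continuity of $h$.

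The principal technical point is the chaining argument in the third case: one must track the overlap constants carefully and verify that a suitable path and covering can be chosen inside $U_i$. Importantly, the super-exponential factor $e^{-\delta/(6\varepsilon)}$ easily dominates any polynomial blowup in $\varepsilon$, so the resulting bound $\leq C\varepsilon$ is far from tight -- the oscillation is in fact of super-exponentially small order.
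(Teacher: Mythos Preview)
Your proof is correct, but it takes a longer route than the paper's.

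The paper treats \emph{all} components $U_i$ with a single argument: enlarge $U_i$ to a Lipschitz domain $D_i$ with $U_i + B_\varepsilon \subset D_i \subset U_{-\delta/4}$, apply the Poincar\'e inequality on $D_i$ to bound $\|u-u_{D_i}\|_{L^2(D_i)}$ by the Dirichlet energy, bound the energy by $e^{\sup_{D_i}F/\varepsilon}\cdot \varepsilon^{-1}\capacity(B_\varepsilon(x_a),B_\varepsilon(x_b);\Omega)$ and invoke \cref{l:capacity:rough}, and then upgrade to a pointwise bound at every $x_0\in U_i$ via the local $L^\infty$--$L^2$ estimate in \cref{l:harnack}. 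No case distinction is made.

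Your approach differs in two ways. For $U_a$ and $U_b$ you replace the energy argument by the maximum principle plus the pointwise renewal bound \cref{l:hab:bound} on $\partial U_i$; this is a genuinely different ingredient (though \cref{l:hab:bound} itself ultimately rests on \cref{l:capacity:rough}), and it sidesteps the minor technicality that the paper's $L^\infty$--$L^2$ estimate is not directly applicable at points $x_0$ with $B_{3\varepsilon}(x_0)\cap\partial B_\varepsilon(x_a)\neq\emptyset$. For the remaining components, your Case~3 is the paper's argument with the global Poincar\'e on $D_i$ replaced by a chain-of-balls Moser iteration; this is valid but introduces avoidable bookkeeping (path lengths inside $U_i$, overlap constants, behaviour near $\partial U_i$). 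Since $h$ is $L_\varepsilon$-harmonic in a full neighbourhood of $\overline{U_i}$ in this case, a single Poincar\'e on an enlarged Lipschitz domain handles everything in one line, which is what the paper does.
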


\begin{proof}
  Consider any component $U_i$ of $U_{-\delta/3}$. We note that we can take $\varepsilon$ small enough depending on the Lipschitz constant of $F$ in $B_R$ and $\delta$ such that there exists a Lipschitz domain $D_i$ satisfying
  \begin{align*}
    U_i + B_\varepsilon \subset D_i \subset  U_{-\delta/4}.
  \end{align*}
  For simplicity, denote $u:=h_{B_\varepsilon(x_a),B_\varepsilon(x_b)}$. Since $D_i$ is Lipschitz we may use the Poincaré inequality to get
  \begin{align*}
    \int_{D_{i}} |u - u_{D_i}|^2 dx \leq C\int_{D_{i}} |\nabla u|^2 dx.
  \end{align*}
  Using that $D_i \subset  U_{-\delta/4}$ together with \cref{l:capacity}
  \begin{align*}
    \int_{D_{i}} |\nabla u|^2 dx
    &\leq
    e^{\sup_{D_{i}} F/\varepsilon} \int_{D_{i}} |\nabla u|^2 e^{-F(x)/\varepsilon}dx
    \\
    &\leq
    \varepsilon^{-1} e^{\sup_{D_{i}} F/\varepsilon} \capacity(B_\varepsilon(x_a),B_\varepsilon(x_b);\Omega).
  \end{align*}
  Using the definition of $U_{-\delta/4}$ and \cref{l:capacity:rough}, we get
  \begin{align} \label{e.small.l2}
    \int_{D_{i}} |u - u_{D_i}|^2 dx \leq C \varepsilon^{q_1} e^{-\delta/4\varepsilon}
  \end{align}
  for some constant $q_1 \in \R$. Now, for any $x_0 \in U_i$ we have by \cref{l:harnack} that
  \begin{align*}
    \sup_{B_\varepsilon(x_0)} |u-u_{D_i}|^2 &\leq C \left ( \fint_{B_{2\varepsilon}} |u-u_{D_i}|^2 dx\right )
  \end{align*}
  which together with \cref{e.small.l2} gives
  \begin{align*}
    \sup_{B_\varepsilon(x_0)} |u-u_{D_i}|^2 \leq C\varepsilon^{q_1-n} e^{-\delta/4\varepsilon}.
  \end{align*}
  Since $x_0$ was an arbitrary point in $U_i$ we conclude that there exists  $\varepsilon_0 \in (0,1)$ depending only on the data such that if $\varepsilon < \varepsilon_0$, the claim holds.
\end{proof}

We conclude this subsection with an estimate relating the value of the potential of the equilibrium potential to the ratio of the $L^1$ norm of the equilibrium potential and the capacity.

\begin{lemma} \label{lem:POEP_ratio}
  Let $\Omega$ be a smooth domain and let $A \Subset \Omega$ be a smooth open set and consider $w_{A,\Omega}$ as the potential of the equilibrium potential in $\Omega$ (see \cref{d:POEP}). Let $x \in \Omega$ be a critical point of $F$ such that $B_{3\sqrt{\varepsilon}}(x) \subset \Omega$. Then there exists a constant $C > 1$ such that for $\rho < \sqrt{\varepsilon}$ we have
  %
  %
  %
  \begin{align*}
    w_{A,\Omega}(x) - C \bigg ( \frac{\rho}{\sqrt{\varepsilon}}\bigg )^{\alpha} (w_{A,\Omega}(x)+1)
    &\leq
    \frac{{\displaystyle \int} h_{A,\Omega^c} h_{B_\rho(x),A} d\mu_\varepsilon}{\capacity(B_\rho(x),A;\Omega)} \\
    &\leq
    w_{A,\Omega}(x) + C \bigg ( \frac{\rho}{\sqrt{\varepsilon}}\bigg )^{\alpha} (w_{A,\Omega}(x)+1).
  \end{align*}
\end{lemma}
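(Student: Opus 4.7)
The plan is to use \cref{lem:POEP} to rewrite the numerator as a boundary integral of $w_{A,\Omega}$ over $\partial B_\rho(x)$, thereby reducing the statement to an oscillation estimate for $w_{A,\Omega}$ at the scale $\sqrt{\varepsilon}$ around the critical point $x$.

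First I would apply \cref{lem:POEP} and then divide through by the capacity, which by \cref{l:capacity} equals $\int e^{-F/\varepsilon}\,d\mu_{B_\rho(x),A}$. This identifies the quotient on the left-hand side of the lemma with the weighted average
\[
\frac{\int h_{A,\Omega^c}\, h_{B_\rho(x),A}\,d\mu_\varepsilon}{\capacity(B_\rho(x),A;\Omega)} = \frac{\int_{\partial B_\rho(x)} w_{A,\Omega}(y)\, e^{-F(y)/\varepsilon}\,d\mu_{B_\rho(x),A}(y)}{\int_{\partial B_\rho(x)} e^{-F(y)/\varepsilon}\,d\mu_{B_\rho(x),A}(y)}.
\]
The right-hand side is a convex combination of the values $w_{A,\Omega}(y)$ for $y \in \partial B_\rho(x)$, so the error in replacing it by $w_{A,\Omega}(x)$ is controlled by $\osc_{\partial B_\rho(x)} w_{A,\Omega}$.

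Next I would establish H\"older continuity of $w_{A,\Omega}$ at scale $\sqrt{\varepsilon}$ around $x$. Because $x$ is a critical point of $F \in C^2$, Lipschitz continuity of $\nabla F$ yields $\|\nabla F\|_{L^\infty(B_{\sqrt{\varepsilon}}(x))} \leq C\sqrt{\varepsilon}$, so the scale restriction $R \leq \min\{\varepsilon/L,\sqrt{\varepsilon}\}$ underpinning the $\varepsilon$-independent constants in \cref{l:harnack,lem:harnack+} is satisfied on balls of radius $R \sim \sqrt{\varepsilon}$ around $x$. Since $h_{A,\Omega^c}$ solves $L_\varepsilon h = 0$ away from $A$, \cref{l:harnack} gives Harnack's inequality for $h_{A,\Omega^c}$ with $\varepsilon$-independent constant. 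Then $L_\varepsilon w_{A,\Omega} = h_{A,\Omega^c}$ combined with \cref{lem:harnack+} yields Harnack for $w_{A,\Omega}+h_{A,\Omega^c}$, and the \emph{in particular} part of \cref{lem:holder} applied at scale $R \sim \sqrt{\varepsilon}$ gives
\[
\osc_{B_\rho(x)}(w_{A,\Omega}+h_{A,\Omega^c}) \leq C\bigl(\rho/\sqrt{\varepsilon}\bigr)^\alpha \osc_{B_{c\sqrt{\varepsilon}}(x)}(w_{A,\Omega}+h_{A,\Omega^c}).
\]
Harnack applied to $w_{A,\Omega}+h_{A,\Omega^c}$, together with the bounds $0 \leq h_{A,\Omega^c} \leq 1$, controls the right-hand oscillation by $C(w_{A,\Omega}(x)+1)$. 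Applying \cref{lem:holder} separately to $h_{A,\Omega^c}$ alone (whose oscillation is at most $1$) handles its contribution, and the triangle inequality leads to
\[
\osc_{B_\rho(x)} w_{A,\Omega} \leq C\bigl(\rho/\sqrt{\varepsilon}\bigr)^\alpha (w_{A,\Omega}(x)+1).
\]

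Finally I would plug this oscillation bound into the weighted-average identity of the first step, which immediately yields the two-sided inequality stated in the lemma. The main technical obstacle is ensuring the $\varepsilon$-independent Harnack inequality on balls of radius $\sqrt{\varepsilon}$; this is precisely where the critical-point hypothesis on $x$ is essential, since it is what keeps $\|\nabla F\|_{L^\infty}/\varepsilon$ bounded on $B_{\sqrt{\varepsilon}}(x)$ and thereby keeps the Harnack and H\"older constants uniform in $\varepsilon$.
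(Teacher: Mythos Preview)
Your proposal is correct and follows essentially the same route as the paper: apply \cref{lem:POEP} and \cref{eq:measure_energy_equality} to identify the quotient as a weighted average of $w_{A,\Omega}$ over $\partial B_\rho(x)$, then bound $\osc_{B_\rho(x)} w_{A,\Omega}$ via the decomposition $\osc w_{A,\Omega}\le \osc(w_{A,\Omega}+h_{A,\Omega^c})+\osc h_{A,\Omega^c}$ together with \cref{lem:holder,lem:harnack+} at scale $\sqrt{\varepsilon}$. Your explicit justification of why the critical-point hypothesis forces $\|\nabla F\|_{L^\infty(B_{\sqrt{\varepsilon}}(x))}\le C\sqrt{\varepsilon}$, and hence why the Harnack/H\"older constants stay $\varepsilon$-independent at that scale, is a point the paper leaves implicit.
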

\begin{proof}
  From \cref{lem:POEP} we get
  \begin{align*}
    \int_{\partial B_\rho(x)} w_{A,\Omega}(y) e^{-F(y)/\varepsilon} d\mu_{B_\rho(x),A}(y) = \int_{(A \cup B)^c} h_{A,\Omega^c}(z) h_{B_\rho(x),A}(z) d\mu_\varepsilon.
  \end{align*}
  We can estimate the left hand side as
  \begin{align*}
    w_{A,\Omega}(x) - \osc_{B_\rho} w_{A,\Omega}
    &\leq
    \inf_{B_\rho(x)} w_{A,\Omega}
    \leq
    \frac {\int_{\partial B_\rho(x)} e^{-F(y)/\varepsilon} w_{A,\Omega} d\mu_{B_\rho(x),A}(y)}{\int_{\partial B_\rho(x)} e^{-F(y)/\varepsilon} d\mu_{B_\rho(x),A}(y)}
    \\
    &\leq
    \sup_{B_\rho(x)} w_{A,\Omega}
    \leq
    w_{A,\Omega}(x) + \osc_{B_\rho} w_{A,\Omega}.
  \end{align*}
  We want to estimate the oscillation of $w_{A,\Omega}$ which we do by considering
  \begin{align*}
    \osc w_{A,\Omega} = \osc (w_{A,\Omega}+h_{A,\Omega^c}-h_{A,\Omega^c}) \leq \osc (w_{A,\Omega}+h_{A,\Omega^c}) + \osc(h_{A,\Omega^c}).
  \end{align*}
  Now, the oscillation of $w_{A,\Omega}+h_{A,\Omega^c}$ and $h_{A,\Omega^c}$ can estimated by \cref{lem:holder} for $\rho \leq \frac{1}{C} \sqrt{\varepsilon}$. That is,
  \begin{align*}
    \osc_{B_\rho} (w_{A,\Omega}+h_{A,\Omega^c}) + \osc_{B_\rho}(h_{A,\Omega^c})
    \leq&
    C \left ( \frac{\rho}{\sqrt{\varepsilon} }\right )^\alpha \sup_{B_{\sqrt{\varepsilon} }}(w_{A,\Omega}+h_{A,\Omega^c})
    \\
    &+
    C \left ( \frac{\rho}{\sqrt{\varepsilon} }\right )^\alpha \sup_{B_{\sqrt{\varepsilon} }}(h_{A,\Omega^c}).
  \end{align*}
  We apply \cref{l:harnack} to replace the supremums on the right hand side with the value at $x$ as both $w_{A,\Omega}+h_{A,\Omega^c}$ and $h_{A,\Omega^c}$ satisfies the Harnack inequality (see \cref{lem:harnack+}). That is,
  \begin{align*}
    \osc_{B_\rho} (w_{A,\Omega}+h_{A,\Omega^c}) + \osc_{B_\rho}(h_{A,\Omega^c})
    &\leq
    C \left ( \frac{\rho}{\sqrt{\varepsilon} }\right )^\alpha (w_{A,\Omega}(x)+h_{A,\Omega^c}(x))
    \\
    &\leq
    C \left ( \frac{\rho}{\sqrt{\varepsilon} }\right )^\alpha (w_{A,\Omega}(x)+1).
  \end{align*}
  It is easily seen that the above can be extended to $\rho \leq \sqrt{\epsilon}$ by applying \cref{lem:harnack+} again and by enlarging the constant $C$. The proof is completed by using \cref{eq:measure_energy_equality} and collecting the estimates above.
\end{proof}

\subsection{Laplace asymptotics for log-concave functions}

The assumptions \cref{eq:struc1} and \cref{eq:struc3} ensure that near critical points the potential $F$ is well approximated by convex functions.  Therefore we will need basic estimates for log-concave functions, which rather surprisingly we did not find in the literature.

\begin{lemma} \label{l:convex:lset}
  Assume  $G : \R^n \to \R $ is a convex function which has a proper minimum at the origin and $G(0)=0$. Then there exists a constant $C = C(n) > 1$ such that
  \begin{align} \label{eq:convex-1}
    \frac{1}{C} |\{G < \eps \}| \leq  \int_{\R^n} e^{-\frac{G}{\eps}} \, dx \leq C |\{G < \eps \}|.
  \end{align}
  Moreover, there is a constant $C = C(n)$ such that for all $\Lambda  > 1$, we have
  \begin{align} \label{eq:convex-2}
    \int_{\{G < \Lambda \eps \}} e^{-\frac{G}{\eps}} \, dx \geq  (1- \eta(C \Lambda^{-1})) \int_{\R^n} e^{-\frac{G}{\eps}}\,dx,
  \end{align}
  with $\eta$ as in \cref{e.eta}.
\end{lemma}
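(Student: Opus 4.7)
My plan is to reduce everything to the one-dimensional integral
\[
\int_{\R^n} e^{-G/\eps}\,dx \;=\; \frac{1}{\eps}\int_0^\infty |\{G < s\}|\, e^{-s/\eps}\,ds \;=\; \int_0^\infty |\{G < \eps t\}|\, e^{-t}\,dt,
\]
obtained by the layer-cake formula applied to $f = e^{-G/\eps}$ (or, equivalently, the co-area formula on the level sets). The whole proof then rests on a single convexity-based scaling estimate for the sublevel sets, namely that for $0 < s \le t$ one has $(s/t)\{G < t\} \subseteq \{G < s\}$, whence
\[
|\{G < t\}| \;\le\; (t/s)^n\,|\{G < s\}|.
\]
This follows because $G(0)=0$ and convexity give $G(\lambda x) \le \lambda G(x)$ for $\lambda \in [0,1]$; setting $\lambda = s/t$ sends $\{G<t\}$ into $\{G<s\}$ after a homothety and the volume changes by the factor $(s/t)^n$.

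For the first estimate \cref{eq:convex-1}, the lower bound is immediate: restricting the integral to $\{G<\eps\}$ gives at least $e^{-1}|\{G<\eps\}|$. For the upper bound I split the layer-cake representation at $t=1$. On $[0,1]$ we use the trivial monotonicity bound $|\{G<\eps t\}| \le |\{G<\eps\}|$; on $[1,\infty)$ we use the scaling inequality with $s = \eps$ to write $|\{G<\eps t\}| \le t^n\,|\{G<\eps\}|$. Adding the two pieces bounds the whole integral by $|\{G<\eps\}|(1+\Gamma(n+1))$, which proves \cref{eq:convex-1} with an explicit $n$-dependent constant.

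For the tail estimate \cref{eq:convex-2}, I write the complementary integral as
\[
\int_{\{G\ge \Lambda\eps\}} e^{-G/\eps}\,dx \;=\; \int_\Lambda^\infty \bigl(|\{G<\eps t\}| - |\{G<\Lambda \eps\}|\bigr)\,e^{-t}\,dt \;\le\; |\{G<\eps\}|\int_\Lambda^\infty t^n e^{-t}\,dt,
\]
using the scaling inequality once more. The key elementary estimate is that for every $n$ there is a constant $C_n$ such that $\int_\Lambda^\infty t^n e^{-t}\,dt \le C_n\,\Lambda^{-n} e^{-\Lambda/C_n}$ for all $\Lambda \ge 1$; this is straightforward from successive integration by parts for large $\Lambda$ and by enlarging $C_n$ to cover the bounded range $1 \le \Lambda \le O(n)$. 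Combining this with the first part of the lemma, $|\{G<\eps\}| \le e \int e^{-G/\eps}\,dx$, gives the desired bound in the form $\eta(C/\Lambda) \int e^{-G/\eps}\,dx$ with $\eta$ as in \cref{e.eta}.

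The only mildly delicate step is matching the tail estimate to the precise functional form $\eta(x) = e^{-1/x}x^n$, which forces $C$ to be chosen strictly larger than $1$ so that the exponential factor $e^{-\Lambda/C}$ dominates the polynomial overhead $\Lambda^{2n}$ for large $\Lambda$; everything else is bookkeeping and can be absorbed into a single constant depending only on $n$.
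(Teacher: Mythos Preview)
Your proof is correct and rests on the same convexity fact as the paper---that sublevel sets of $G$ scale nicely---but you package it more cleanly. The paper proves the dyadic version $|\{G<2t\}|\le 2^n|\{G<t\}|$ via a ray argument (showing $g(2)\ge 2$ for $g(s)=G(s\hat x)$ with $\hat x\in\partial\{G<1\}$), then iterates and sums over shells $\{j\eps\le G<(j+1)\eps\}$; for the tail it uses the pointwise trick $e^{-G/\eps}\le e^{-\Lambda+1}e^{-G/(\Lambda\eps)}$ on $\{G\ge\Lambda\eps\}$ and reapplies the first part with $\eps$ replaced by $\Lambda\eps$. Your one-line observation $G(\lambda x)\le\lambda G(x)$ gives the sharp continuous scaling $|\{G<t\}|\le(t/s)^n|\{G<s\}|$ directly (no extraneous factor $2^n$), and the layer-cake representation turns both the upper bound and the tail into a single Gamma-type integral $\int t^n e^{-t}\,dt$, bypassing the shell summation and the change-of-scale trick. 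The two arguments are equivalent in spirit; yours is shorter and yields slightly better constants.
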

\begin{proof}
  By approximation we may assume that $G$ is smooth.  The lower bound in \cref{eq:convex-1} follows immediately from
  \begin{align*}
    \int_{\R^n} e^{-\frac{G}{\eps}} \, dx \geq \int_{\{G < \eps \} } e^{-\frac{G}{\eps}} \, dx \geq e^{-1} |\{G < \eps \}|.
  \end{align*}

  To prove the upper bound in \cref{eq:convex-1} we first show that, for all $t>0$, it holds
  \begin{align} \label{eq:convex-3}
    |\{G < 2t  \}| \leq 2^n  |\{G < t \}|.
  \end{align}
  In order to prove \cref{eq:convex-3}  it is enough to consider only the case $t = 1$ (the general case follows by considering $\tilde G = G/t$). Denote $E_1 = \{ G < 1 \}$ and $E_2 = \{ G < 2 \}$. Hence our goal is to show
  \begin{align*}
    E_2 \subset 2 E_1 = \{ 2 x : x \in E_1\} .
  \end{align*}
  Fix $\hat x \in \pa E_1$ and define $g(t) = G(t \hat x) $ for $t \geq 0$. By our assumptions, $g(t)$ is a smooth convex function satisfying $g(0)=0$ and $g(1)=1$. As such, both $g$, $g'$ are increasing functions from which we can conclude that $g'(1) \geq 1$.
  Now, by the fundamental theorem of calculus,
  \begin{align*}
    g(2) - g(1) = \int_1^2 g'(t) \, dt \geq 1
  \end{align*}
  which gives $g(2) \geq 2$. This means that for all $\hat x \in \partial  E_1$ we have  $G(2\hat x) \geq 2$. That is, we have $E_2 \subset 2 E_1$. Thus
  \begin{align*}
    |E_2| \leq |2 E_1| \leq 2^n |E_1|
  \end{align*}
  and \cref{eq:convex-3} follows. Iterating \cref{eq:convex-3} gives
  \begin{align*}
    |\{G < 2^j \eps  \}| \leq 2^{jn}  |\{G < \eps \}|
  \end{align*}
  and hence
  \begin{align} \label{eq:convex-4}
    |\{G < \varrho \eps  \}| \leq  (2\varrho)^n |\{G < \eps \}|
  \end{align}
  for all $\varrho \geq 1$.
  We conclude the proof of the upper bound in \cref{eq:convex-1} by using \cref{eq:convex-4} as
  \begin{align*}
    \int_{\R^n} e^{-\frac{G}{\eps}} \, dx &\leq   \sum_{j=0 }^\infty  \int_{\{ j\eps \leq G < (j+1) \eps\}}  e^{-\frac{G}{\eps}} \, dx \\
    &\leq  \sum_{j=0 }^\infty  |\{ j \eps \leq G < (j+1)\eps \} |  e^{-j} \\
    &\leq 2^n \sum_{j=0 }^\infty   e^{-j} (j+1)^n   |\{G < \eps \}| \leq C(n) |\{G < \eps \}|.
  \end{align*}

  It remains to prove \cref{eq:convex-2}. Fix $\Lambda >1$. Then, for every $x \in \{ G \geq \Lambda \eps\}$, it holds
  \begin{align}\label{e.lambda.eps}
    e^{-\frac{G(x)}{\eps}} =  e^{-\frac{\Lambda G(x)}{\Lambda \eps}}  = \left(e^{-\frac{G(x)}{\Lambda \eps}}  \right)^\Lambda  =  \left(e^{-\frac{G(x)}{\Lambda \eps}}  \right)^{\Lambda-1}  e^{-\frac{G(x)}{\Lambda \eps}}  \leq e^{-\Lambda+1}  e^{-\frac{G(x)}{\Lambda \eps}}  .
  \end{align}
  Therefore we have, by \cref{e.lambda.eps,eq:convex-1,eq:convex-4},
  \begin{equation} \label{eq:convex-5}
    \begin{split}
      \int_{\{G \geq  \Lambda \eps \}} e^{-\frac{G}{\eps}} \, dx &\leq  e^{-\Lambda+1}   \int_{\{G \geq \Lambda  \eps \}} e^{-\frac{G}{ \Lambda \eps}} \, dx \leq e^{-\Lambda+1}   \int_{\R^n } e^{-\frac{G}{ \Lambda \eps}} \, dx\\
      &\leq Ce^{-\Lambda} |\{ G < \Lambda \eps \} | \\
      &\leq C e^{-\Lambda}  \Lambda^n  |\{ G < \eps \} | \\
      &\leq C e^{-\Lambda}  \Lambda^n    \int_{\R^n } e^{-\frac{G}{\eps}} \, dx
    \end{split}
  \end{equation}
  and the inequality \cref{eq:convex-2} follows by using \cref{e.eta}.
\end{proof}

\begin{lemma} \label{l:loc:convex}
  Assume  $G : \R^n \to \R $ is a function which has a proper global minimum at the origin and $G(0)=0$. Furthermore, assume there is a constant $C_0$ such that, for all $a > 0$ and $\varepsilon > 0$, it holds that
  \begin{align} \label{e.exptight}
    \int_{G > a} e^{-G/\varepsilon} dx < C_0 e^{-a/\varepsilon}.
  \end{align}
  If there is a level $\varepsilon_0 > 0$ such that $G$ is convex on the component of $\{G(x) < \varepsilon_0\}$ that contains $0$, then there is an $\varepsilon_1(n,|\{G < \varepsilon_0/2\}|) < \varepsilon_0$ and a constant $C=C(C_0,n) > 1$ such that, for all $\varepsilon < \varepsilon_1$, it holds that
  \begin{align*}
    C^{-1} |\{G < \eps \}| \leq  \int_{\R^n} e^{-\frac{G}{\eps}} \, dx \leq C |\{G < \eps \}|.
  \end{align*}
\end{lemma}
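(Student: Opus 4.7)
The lower bound $|\{G < \varepsilon\}| \le C \int_{\R^n} e^{-G/\varepsilon}\, dx$ is immediate with $C = e$, since $e^{-G/\varepsilon} \ge e^{-1}$ on $\{G < \varepsilon\}$. For the upper bound my plan is to split
\[
\int_{\R^n} e^{-G/\varepsilon}\, dx = \int_V e^{-G/\varepsilon}\, dx + \int_{\R^n \setminus V} e^{-G/\varepsilon}\, dx,
\]
where $V$ denotes the connected component of $\{G < \varepsilon_0/2\}$ containing the origin. Since $G$ is convex on the larger component $\{G < \varepsilon_0\}_0 \supset V$, the set $V$ is convex and every sublevel set $\{G < t\}_0 \subset V$ (for $t \le \varepsilon_0/2$) is itself convex.

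On $V$ I adapt the ray argument from the proof of \cref{l:convex:lset} verbatim: it only uses convexity of $G$ along rays $[0, 2\hat x]$ with $\hat x \in \partial \{G < t\}_0$, which remain inside $V$ as long as $2t < \varepsilon_0$. This yields the local doubling $|\{G < 2t\}_0| \le 2^n |\{G < t\}_0|$ for all $2t < \varepsilon_0$, which iterates to $|\{G < \Lambda\varepsilon\}_0| \le C(n)\Lambda^n |\{G < \varepsilon\}_0|$ whenever $\Lambda\varepsilon < \varepsilon_0$. Summing over dyadic annuli exactly as in \cref{l:convex:lset} then produces
\[
\int_V e^{-G/\varepsilon}\, dx \le C(n)\, |\{G < \varepsilon\}_0|.
\]

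For the far integral I use \cref{e.exptight} twice. With $a = \varepsilon_0/4$ it controls the portion of the integral over $\{G \ge \varepsilon_0/2\}$ by $C_0 e^{-\varepsilon_0/(4\varepsilon)}$, and setting $\eta_0 := \inf_{\{G < \varepsilon_0/2\} \setminus V} G$ (which is strictly positive because the origin is a proper isolated global minimum -- any accumulation of near-zero values in components disjoint from $V$ would violate \cref{e.exptight}), a second application of \cref{e.exptight} with $a = \eta_0/2$ bounds the remaining contribution from the other components of $\{G < \varepsilon_0/2\}$ by $C_0 e^{-\eta_0/(2\varepsilon)}$. To absorb both exponentially small terms into $|\{G < \varepsilon\}|$ I run the doubling estimate backwards from level $\varepsilon_0/2$ down to level $\varepsilon$, obtaining the polynomial lower bound $|\{G < \varepsilon\}_0| \ge c\bigl(n, |\{G < \varepsilon_0/2\}|, \varepsilon_0\bigr)\, \varepsilon^n$; for $\varepsilon < \varepsilon_1$ small enough this dominates the two exponentially small far contributions, which finishes the upper bound. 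The main obstacle is precisely this far integral: on $\R^n \setminus V$ no convexity is available and \cref{e.exptight} must supply both the quantitative exponential decay and, qualitatively, the positivity of $\eta_0$; tracing the dependence of $\varepsilon_1$ on $n$ and $|\{G < \varepsilon_0/2\}|$ through the backward-doubling step is the final bookkeeping.
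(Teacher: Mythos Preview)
Your proof is correct and follows essentially the same strategy as the paper: local convexity handles the integral near the origin, \cref{e.exptight} handles the tail, and a polynomial lower bound $|\{G<\varepsilon\}|\ge c\,\varepsilon^n$ absorbs the exponentially small remainder. The paper packages the first step a bit differently---it extends $G$ outside the convex set $\{G\le\varepsilon_0/2\}$ to a globally convex function and then invokes \cref{l:convex:lset} as a black box, whereas you re-run the doubling argument directly inside $V$; and for the polynomial lower bound the paper compares with an explicit conical function $\tilde G(t\hat x/|\hat x|)=t\,\varepsilon_0/2$ rather than reversing the doubling inequality---but these are the same computations in different clothing.

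One remark on your handling of the other components of $\{G<\varepsilon_0/2\}$. The paper simply sidesteps this issue (it writes ``the level set $\{G\le\varepsilon_0/2\}$ is convex'', silently identifying it with the component through the origin), so your extra step is more scrupulous. However, your stated reason that $\eta_0>0$---that an accumulation of near-zero values outside $V$ would violate \cref{e.exptight}---is not correct: \cref{e.exptight} controls only integrals over super-level sets $\{G>a\}$ and says nothing directly about where $G$ is small. In the paper's applications the relevant sublevel set is connected, so the point is moot; if you want the general case airtight, argue instead that either $\eta_0>0$ (and then your use of \cref{e.exptight} with $a=\eta_0/2$ goes through), or the global minimum is attained again outside $V$, in which case that region already contributes to the right-hand side $|\{G<\varepsilon\}|$.
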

\begin{proof}
  Since $G$ is convex in the level set $\{G < \varepsilon_0\}$, we know that the level set $\{G \leq \varepsilon_0/2\}$ is convex and as such we can extend the function $G$ outside that level set to a globally convex function. This allows us to apply \cref{l:convex:lset} and obtain
  \begin{align} \label{e.head.estimate}
    \frac{1}{C} |\{G < \eps \}| \leq  \int_{\{G < \varepsilon_0/2\}} e^{-\frac{G}{\eps}} \, dx \leq C |\{G < \eps \}|.
  \end{align}
  Now, split the integral as
  \begin{align*}
    \int e^{-G/\varepsilon} dx = \int_{G \leq \varepsilon_0/2} e^{-G/\varepsilon} dx + \int_{G > \varepsilon_0/2} e^{-G/\varepsilon} dx.
  \end{align*}
  From \cref{e.head.estimate} it follows that it suffices to bound the second integral on the right hand side.
  Using \cref{e.exptight} for $a = \varepsilon_0/2$ we get
  \begin{align*}
    \int_{G > \varepsilon_0/2} e^{-G/\varepsilon} dx \leq C_0 e^{-(\varepsilon_0/2)/\varepsilon}.
  \end{align*}
  Since $G$ is convex in the level set $\{G < \varepsilon_0/2\}$, which is again convex, it follows  that we can construct a conical function $\tilde G$ as follows: For any $\hat x \in \partial \{G < \varepsilon_0/2\}$ define $\tilde G(t \hat x/\|\hat x\|) = t \varepsilon_0/2$. The level sets of $\tilde G$ satisfy, for $\varepsilon < \varepsilon_0/2$,
  \begin{align*}
    |\{\tilde G < \varepsilon\}| \leq |\{G < \varepsilon\}|.
  \end{align*}
  However,
  \begin{align*}
    |\{\tilde G < \epsilon\}| = \left ( \frac{\varepsilon}{\varepsilon_0/2}\right )^n |\{\tilde G < \varepsilon_0/2\}| = \left ( \frac{\varepsilon}{\varepsilon_0/2}\right )^n |\{G < \varepsilon_0/2\}|.
  \end{align*}
  Now, we can choose $\varepsilon_1(n,C_0,|\{G < \varepsilon_0/2\}|) < \varepsilon_0/2$ such that for $\varepsilon < \varepsilon_1$ we have
  \begin{align*}
    e^{-(\varepsilon_0/2)/\varepsilon} \leq \left ( \frac{\varepsilon}{\varepsilon_0}\right )^n |\{G < \varepsilon_0/2\}|.
  \end{align*}
  This means that for $\varepsilon < \varepsilon_1$ we also have
  \begin{align*}
    \int_{G > \varepsilon_0/2} e^{-G/\varepsilon} dx \leq C_0 |\{G < \varepsilon\}|
  \end{align*}
  which together with \cref{e.head.estimate} completes the proof.
  %
  %
\end{proof}

We conclude this section with the following  technical lemma which is useful when we study the potential near critical points.
\begin{lemma}
  \label{lem:tech-loc}
  Assume  $G : \R^n \to \R $ is a convex function which has a proper minimum at the origin and $G(0)=0$. Let $\omega: [0,\infty) \to [0,\infty)$ be as in \cref{eq:struc1,eq:struc3}. Then for all $\delta \leq \delta_0$, we have
  \begin{align*}
    \int_{\{ G < \delta \} } e^{-\frac{G(x)}{\eps}} e^{ \frac{\pm \omega(G(x))}{\eps}} \, dx \simeq  \int_{\R^n} e^{-\frac{G(x)}{\eps}} \, dx.
  \end{align*}
\end{lemma}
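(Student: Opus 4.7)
The plan is to localize the integral near the minimum at the scale $\eps_1=\eps_1(\eps)$ defined in \cref{lem:geometric-1}. On $\{G<\eps_1\}$ the multiplicative perturbation $e^{\pm\omega(G)/\eps}$ will be close to $1$, and the complementary tail $\{\eps_1\le G<\delta\}$ will be shown to contribute a quantity negligible with respect to $\int_{\R^n}e^{-G/\eps}\,{\rm d}x$. Since $\eps_1\to 0$ as $\eps\to 0$, we may restrict attention to $\eps$ so small that $\eps_1\le\delta$ and split
\begin{equation*}
\int_{\{G<\delta\}}e^{-G/\eps}e^{\pm\omega(G)/\eps}\,{\rm d}x \;=\; I_1 + I_2,
\end{equation*}
where $I_1$ and $I_2$ denote the contributions of $\{G<\eps_1\}$ and $\{\eps_1\le G<\delta\}$ respectively.

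For $I_1$, the monotonicity of $\omega$ gives $\omega(G)\le\omega(\eps_1)$ on $\{G<\eps_1\}$, and the defining relation $\sqrt{\omega(\eps_1)\eps_1}=\eps$ together with \cref{lem:geometric-2} shows
\begin{equation*}
\frac{\omega(\eps_1)}{\eps}\;=\;\sqrt{\frac{\omega(\eps_1)}{\eps_1}}\;\longrightarrow\;0\qquad\text{as }\eps\to 0.
\end{equation*}
Consequently $e^{\pm\omega(G)/\eps}$ is uniformly within a factor $1\pm C\omega(\eps_1)/\eps$ of $1$ on the inner set, whence
\begin{equation*}
I_1\;=\;\bigl(1\pm C\omega(\eps_1)/\eps\bigr)\int_{\{G<\eps_1\}}e^{-G/\eps}\,{\rm d}x.
\end{equation*}
Applying \cref{eq:convex-2} of \cref{l:convex:lset} with $\Lambda=\eps_1/\eps$ then shows that the inner integral above agrees with $\int_{\R^n}e^{-G/\eps}\,{\rm d}x$ up to a relative error bounded by $\hat\eta(C\eps)$.

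The delicate step is $I_2$ with the exponent $+\omega(G)/\eps$, which a priori blows up. The key observation is that $\omega(s)\le s/8$ for $s\le 4\delta_0$, and in particular on $\{G<\delta\}$ since $\delta\le\delta_0$. This absorbs the bad sign into $e^{-G/\eps}$:
\begin{equation*}
I_2\;\le\;\int_{\{G\ge\eps_1\}}e^{-7G/(8\eps)}\,{\rm d}x\;=\;\int_{\{G\ge\eps_1\}}e^{-G/\tilde\eps}\,{\rm d}x,\qquad \tilde\eps=\tfrac{8}{7}\eps.
\end{equation*}
A second application of \cref{eq:convex-2}, now with parameter $\tilde\eps$ and $\tilde\Lambda=\eps_1/\tilde\eps$, bounds this by $\hat\eta(C\eps)\int_{\R^n}e^{-G/\tilde\eps}\,{\rm d}x$, while \cref{eq:convex-1} combined with the doubling estimate \cref{eq:convex-4} gives $\int_{\R^n}e^{-G/\tilde\eps}\,{\rm d}x\le C\int_{\R^n}e^{-G/\eps}\,{\rm d}x$. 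Hence $I_2\le C\hat\eta(C\eps)\int_{\R^n}e^{-G/\eps}\,{\rm d}x$.

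Collecting the bounds on $I_1$ and $I_2$ yields the required two sided comparison defining $\simeq$. For the sign $-\omega(G)/\eps$ no special argument is needed on the tail since $I_2\ge 0$; simply restricting to $\{G<\eps_1\}$ and using the bound on $I_1$ already gives the matching lower bound. The whole proof therefore reduces to the localization scale $\eps_1$ and the absorption trick $\omega(s)\le s/8$, the latter being the only genuinely non-routine ingredient.
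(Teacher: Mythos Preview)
Your proof is correct and follows essentially the same approach as the paper: localize at the scale $\eps_1$, control the inner region $\{G<\eps_1\}$ via $\omega(\eps_1)/\eps\to 0$ and \cref{eq:convex-2}, and absorb the perturbation on the tail using $\omega(s)\le cs$ (the paper phrases this as $\omega(s)\le s/2$ and appeals to the computation in \cref{eq:convex-5}, while you use $\omega(s)\le s/8$ and invoke \cref{eq:convex-2}, \cref{eq:convex-1}, \cref{eq:convex-4} directly, but this is the same argument).
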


\begin{proof}
  Denote $\Lambda_\eps = \frac{\eps_1}{\eps}$ with $\eps_1$ as in \cref{lem:geometric-1}. From \cref{lem:geometric-2} we know that $\Lambda_\eps \to \infty$ as $\eps \to 0$.
  Now, by \cref{eq:convex-2}  in \cref{l:convex:lset} and \cref{lem:geometric-2}, we get
  \begin{align} \label{lem:geometric-3}
    \int_{\{G < \eps_1\}} e^{-\frac{G(x)}{\eps}} e^{\frac{\omega(G(x))}{\eps}} \, dx \simeq \int_{\{G < \Lambda_\eps \eps \}} e^{-\frac{G(x)}{\eps}}  \, dx \simeq  \int_{\R^n} e^{-\frac{G(x)}{\eps}}  \, dx.
  \end{align}
  The lower bound  follows immediately from this.
  In order to prove the upper bound, note that $\omega(s) \leq s/2$ for all $s \leq \delta_0$ by  assumption. Therefore we can repeat the argument in \cref{eq:convex-5} to get
  \begin{align*}
    \int_{\{ \eps_1 < G < \delta \} } e^{-\frac{G(x)}{\eps}} e^{\frac{\omega(G(x))}{\eps}} \, dx \leq	 \int_{\{ G > \Lambda_\eps \eps  \} } e^{-\frac{G(x)}{2\eps}}  \, dx \leq \eta(C \Lambda_\eps^{-1})  \int_{\R^n } e^{-\frac{G}{\eps}} \, dx,
  \end{align*}
  which together with \cref{lem:geometric-3} yields the upper bound.
\end{proof}

\section{Proofs of \cref{thm1} and \cref{thm2}}

In this section we prove the capacity estimate in \cref{thm1} and exit time estimate in \cref{thm2}. Before we begin, we would like to remind the reader that, as in \cref{sec:technical}, we will assume that $F$ satisfies our structural assumptions and that all constants depend on the data, see the paragraph after \cref{e.etahat}.

We first study the geometric quantities $d_\eps(A,B;\Omega)$ and $V_\eps(A,B;\Omega)$ defined in \cref{eq:struc5} and  \cref{eq:struc6} and give a more explicit, but less geometric, characterization. The characterization for the geodesic distance $d_\eps(A,B;\Omega)$  turns out to be much easier than for the separating surface $V_\eps(A,B;\Omega)$ and therefore we prove it first.
\begin{proposition}
  \label{lem:geometric-d}
  Assume that $x_a$ and $x_b$ are local minimum points of $F$, let $U_a$ and $U_b$ be the islands, i.e., the components of the set $U_{-\delta/3}$, containing $B_\eps(x_a)$ and $B_\eps(x_b)$ respectively. Assume that  $z$ is a saddle point in $Z_{x_a,x_b}$, such that the  bridge $O_{z, \delta}$ connects $U_a$ and  $U_b$, and  denote $\Omega = U_a \cup U_b \cup O_{z, \delta}$. Then it holds for $g_z$ given in \cref{eq:struc3} that
  \begin{align*}
    d_{\eps}(B_\eps(x_a),B_\eps(x_b); \Omega) \simeq e^{\frac{F(z)}{\eps}}  \int_{\R} e^{-\frac{g_z(x_1)}{\eps}} \, dx_1 .
  \end{align*}
\end{proposition}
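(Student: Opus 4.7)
The plan is to prove the two inequalities encoded in $\simeq$ separately, establishing an upper bound by exhibiting a concrete good path and a matching lower bound by showing any admissible path must pay this cost. I work throughout in the local coordinates at $z$ from \cref{eq:struc3}, in which the saddle-descent direction is $x_1$ and $F \approx F(z) - g_z(x_1) + G_z(x')$; let $(-a, b) := \{g_z < \delta\}$, an interval since $g_z$ is convex with proper minimum at $0$.

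For the upper bound, I would build a smooth admissible curve $\gamma$ in three pieces: first, a curve inside $U_a$ connecting $B_\eps(x_a)$ to the point with local coordinates $(-a, 0)$; second, the saddle-axis segment $t \mapsto (t, 0)$ for $t \in [-a, b]$; third, a curve inside $U_b$ connecting $(b, 0)$ to $B_\eps(x_b)$. On the first and third pieces, $F \leq F(z) - \delta/3$ and the lengths are bounded uniformly by the diameter of $\Omega$, so these contribute at most $C e^{(F(z)-\delta/3)/\eps}$; this is exponentially smaller than any expression of the form $\hat\eta(C\eps) e^{F(z)/\eps} \int_\R e^{-g_z/\eps}$, since the 1D integral is only polynomially small in $\eps$ by \cref{l:convex:lset}. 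On the middle segment, $|\gamma'|=1$ (isometry) and \cref{eq:struc3} at $x'=0$ gives $F(\gamma(t)) \leq F(z) - g_z(t) + \omega(g_z(t))$; this bounds the middle contribution by $e^{F(z)/\eps} \int_{\{g_z < \delta\}} e^{(-g_z + \omega(g_z))/\eps}\, dt$, which \cref{lem:tech-loc} controls by $(1 + \hat\eta(C\eps)) e^{F(z)/\eps} \int_\R e^{-g_z/\eps}\, dt$.

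For the lower bound, take an arbitrary $\gamma \in \mathcal{C}(B_\eps(x_a), B_\eps(x_b); \Omega)$. I would introduce a $1$-Lipschitz function $\phi: \Omega \to \R$ defined by $\phi \equiv -a$ on $U_a$, $\phi \equiv b$ on $U_b$, and $\phi := x_1(\,\cdot\,)$ on the bridge $O_{z,\delta}$. Continuity at the interfaces will hold because within $\Omega$ the only parts of $\partial O_{z,\delta}$ adjacent to $U_a \cup U_b$ are the side-slabs $\{x_1=-a\}$ and $\{x_1=b\}$: the top/bottom $\{G_z=\delta\}$ satisfy $F > F(z) - \delta/3$ by \cref{eq:struc3} and therefore lie on $\partial \Omega$. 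Since $\phi(\gamma(0))=-a$ and $\phi(\gamma(1))=b$, the intermediate value theorem furnishes, for every $s \in (-a,b)$, a time $t$ with $\phi(\gamma(t))=s$, and any such $t$ forces $\gamma(t) \in O_{z,\delta}$ with $x_1(\gamma(t))=s$. On the bridge, using $G_z \geq 0$ together with $\omega(G_z) \leq G_z/8$ (valid since $G_z \leq \delta \leq \delta_0$), \cref{eq:struc3} yields $F(\gamma(t)) \geq F(z) - g_z(s) - \omega(g_z(s))$. The bound $|\gamma'| \geq |(\phi\circ\gamma)'|$ and the one-variable area formula then give
\[
  \int_0^1 |\gamma'|\, e^{F(\gamma)/\eps}\, dt \geq \int_\R \sum_{t \in (\phi\circ\gamma)^{-1}(s)} e^{F(\gamma(t))/\eps}\, ds \geq e^{F(z)/\eps} \int_{\{g_z < \delta\}} e^{(-g_z - \omega(g_z))/\eps}\, ds,
\]
after restricting to $s \in (-a, b)$ and invoking the fiber lower bound. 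A second application of \cref{lem:tech-loc} concludes the lower bound.

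The main obstacle is the lower bound, specifically preventing an admissible path from evading the saddle via repeated excursions through the bridge or via non-monotone motion in $x_1$. The projection $\phi$ is tailored to handle both: continuity across the interfaces reduces the estimate to a 1D computation, and the area formula with multiplicity $\geq 1$ (guaranteed by the intermediate value theorem) bypasses any need to control monotonicity or count the number of crossings. The well-definedness of $\phi$ as a continuous function on $\Omega$ uses the non-branching structural hypothesis built into \cref{eq:struc3}, which forces the bridge to connect exactly two components of $U_{-\delta/3}$ through exactly two side-slabs.
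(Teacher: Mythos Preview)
Your approach is essentially the same as the paper's: both prove the lower bound by projecting the curve to the $x_1$-direction, showing the image covers a suitable interval, and invoking \cref{lem:tech-loc}; the upper-bound constructions are identical. The paper phrases the coverage step via separating cross-sections $\Gamma_s = \{s\} \times \{G < \delta\}$ for $s \in \{g < \delta/10\}$ rather than via an extended projection $\phi$, but this is only a packaging difference.

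There is one genuine (though repairable) gap in your construction of $\phi$. The islands $U_a, U_b$ are open subsets of $\{F < F(x_a;x_b) - \delta/3\}$ and they \emph{overlap} the bridge $O_{z,\delta}$ --- that overlap is precisely how the bridge connects them. Hence your two prescriptions $\phi \equiv -a$ on $U_a$ and $\phi = x_1$ on $O_{z,\delta}$ conflict on the nonempty open set $U_a \cap O_{z,\delta}$; your interface discussion concerns only $\partial O_{z,\delta}$ and does not resolve this. The fix is to let the bridge definition take priority: set $\phi = x_1$ on $O_{z,\delta}$ and $\phi = -a$ on $U_a \setminus O_{z,\delta}$ (similarly for $U_b$); continuity at $\partial O_{z,\delta} \cap \Omega$ then follows exactly from your side-slab argument. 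With this corrected $\phi$ one no longer has $\phi(\gamma(0)) = -a$ in general, but since $\gamma(0) \in U_a \subset \{F < -\delta/3\}$, if $\gamma(0) \in O_{z,\delta}$ then \cref{eq:struc3} forces $g_z(x_1(\gamma(0))) \gtrsim \delta/3$; thus $\phi(\gamma(0))$ still lies to the left of $\{g_z < \delta/10\}$, and the intermediate value theorem yields coverage of that smaller interval --- which is all \cref{lem:tech-loc} needs. This is exactly why the paper works with $\{g < \delta/10\}$ rather than the full $(-a,b)$.
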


\begin{proof}
  Begin by denoting $g = g_z$ and let us first prove the lower bound, i.e.,
  \begin{equation} \label{lem:geometric-d-1}
    d_{\eps}(B_\eps(x_a),B_\eps(x_b); \Omega) \geq (1- \hat \eta(C\eps)) e^{\frac{F(z)}{\eps}} \int_{\R} e^{-\frac{g(x_1)}{\eps}} \, dx_1 .
  \end{equation}
  To this aim we choose a smooth curve $\gamma \in \mathcal{C}(B_\eps(x_a),B_\eps(x_b); \Omega)$ which, by assumptions, intersects the bridge $O_{z, \delta}$. We may choose the coordinates in $\R^n$ such that  $z= 0$ and
  \begin{align*}
    O_{\delta} = O_{z, \delta}  = \{ x_1 :g(x_1) <  \delta\} \times \{ x' \in \R^{n-1} : G(x') <  \delta\} .
  \end{align*}
  Moreover, by changing the potential from $F$ to $F - F(z)$ we may assume that $F(0)= 0$. Note that then it holds $U_a, U_b \subset \{ F <-\delta/3\}$ as $F(z) \geq F(x_a;x_b)$.

  Let us fix $s \in \R$ such that $g(s) < \frac{\delta}{10}$ and denote $\Gamma_s = \{ s\} \times \{ G < \delta\} \subset O_\delta$.  By the assumption \cref{eq:struc3} we have
  \begin{align*}
    F > - \frac{\delta}{4} \qquad \text{on } \, \Gamma_s.
  \end{align*}
  In particular,  since $U_a, U_b \subset \{ F <-\delta/3\}$, then  the surface $\Gamma_s $ does not intersect $U_a$ or $U_b$. Thus we conclude that  every  $\gamma \in  \mathcal{C}(B_\eps(x_a),B_\eps(x_b); \Omega)$  intersects $\Gamma_s $, i.e., $\Gamma_s \in \mathcal{S}(B_\eps(x_a),B_\eps(x_b); \Omega)$. Let us denote the projection to the $x_1$-axis by $\pi_1 : \R^n \to \R$, i.e. $\pi_1(x) = x_1$. From the previous discussion we conclude that $s \in \pi_1\big(\gamma([0,1]) \cap O_\delta\big)$. This holds  for every $s \in \{g < \delta/10\}$, and therefore
  \begin{align} \label{lem:geometric-d-3}
    \{g < \delta/10\} \subset \pi_1\big(\gamma([0,1]) \cap O_\delta\big).
  \end{align}
  Now the assumption \cref{eq:struc3} implies that, in the set $O_{ \delta}$, it holds that
  \begin{equation} \label{lem:geometric-d-4}
    \begin{split}
      F(x) &\geq -g(x_1) - \omega(g(x_1)) + G(x') -\omega(G(x')) \\
      &\geq -g(x_1) - \omega(g(x_1))  +\frac12 G(x') \geq  -g(x_1) - \omega(g(x_1))  .
    \end{split}
  \end{equation}
  Then for  $\gamma_1 = \pi_1(\gamma)$ we  have by   \cref{lem:geometric-d-3} and \cref{lem:tech-loc} that
  \begin{align*}
    \int_{\{t:\gamma(t) \in O_{ \delta}\}} |\gamma'| e^{\frac{F(\gamma)}{\eps}} \,  dt &\geq \int_{\{t:\gamma(t) \in O_{ \delta}\}} |\gamma_1'| e^{\frac{-g(\gamma_1)}{\eps}}e^{\frac{-\omega(g(\gamma_1))}{\eps}} \, dt \\
    &\geq \int_{\{ g < \delta/10\}}  e^{\frac{-g(x_1)}{\eps}}e^{-\frac{\omega(g(x_1))}{\eps}} \, dx_1\\
    &\geq (1- \eta(C \eps))\int_{\R} e^{-\frac{g(x_1)}{\eps}} \, dx_1,
  \end{align*}
  proving \cref{lem:geometric-d-1}.

  To prove the upper bound, i.e.
  \begin{align*}
    d_{\eps}(B_\eps(x_a),B_\eps(x_b); \Omega) \leq (1+ \hat \eta(C \eps)) \int_{\R} e^{-\frac{g(x_1)}{\eps}} \, dx_1
  \end{align*}
  with $\hat \eta$ as in \cref{e.etahat}.
  we  denote  by $c_- < 0 < c_+$ the numbers such that $g(c_-) = g(c_+) = \delta$. We first connect the points $x_1 = (c_-, 0)$ and $x_2 = (c_+, 0)$ by a segment $\gamma_0(t) = tx_1 +(1-t)x_2$. Then it holds by the assumption  \cref{eq:struc3} and by  \cref{lem:tech-loc} that
  \begin{align*}
    \int_{\gamma_0} e^{-\frac{F(\gamma_0)}{\eps}} \, dt \leq  \int_{\{g < \delta\}}  e^{\frac{-g(x_1)}{\eps}}e^{\frac{\omega(g(x_1))}{\eps}} \, dx_1 \leq (1+\hat \eta(C\Lambda_\eps^{-1})) \int_{\R} e^{-\frac{g(x_1)}{\eps}} \, dx_1.
  \end{align*}
  We then connect $x_a$ to $x_1$ and  $x_2$ to $x_b$ with smooth curves  $\gamma_1, \gamma_2 \subset \{x \in \Omega :  F(x )< -\delta/3\}$. Since it holds $g(t) \leq C|t|$ we have $|\{g <\eps\}|\geq c \, \eps$.  Therefore it holds by \cref{l:convex:lset} that
  \begin{align*}
    \int_{\gamma_i} |\gamma_i'|e^{-\frac{F(\gamma_i)}{\eps}} \, dt
    &\leq
    e^{\frac{-\delta}{3\eps}} \int_{\gamma_i} |\gamma_i'|\, dt
    \leq
    C e^{\frac{-\delta}{3\eps}}
    \\
    &\leq
    \hat \eta(C \eps) |\{g <\eps\}| \leq \hat \eta(C \eps) \int_{\R} e^{-\frac{g(x_1)}{\eps}} \, dx_1.
  \end{align*}
  The constant in the last expression depends on the length of $\gamma_i$. We can use a similar argument as in the proof of \cref{l:capacity:rough} to bound the length of the curve. This time, we will however consider coverings with balls of size comparable to $\delta$, as we are in the level set $\{F < -\delta/3\}$ we have some room to replace our curve with another curve which has a length depending on $\delta$ and $R$, while still retaining the same upper bound as above.

  The upper bound now follows by joining the paths $\gamma_1, \gamma_0$ and $\gamma_2$, thus, constructing a competitor for the geodesic length.
\end{proof}

We need to prove similar result to \cref{lem:geometric-d}, but for the separating surface. This turns out to be trickier than the previous result for paths.

\begin{proposition}
  \label{lem:geometric-V}
  Assume that $x_a,x_b,z, U_a,U_b,O_\delta$ and $\Omega$ are as in \cref{lem:geometric-d}. Then it holds for $G_z$ from \cref{eq:struc3} that
  \begin{align*}
    V_{\eps}(B_\eps(x_a),B_\eps(x_b);\Omega) \simeq e^{-\frac{F(z)}{\eps}}  \int_{\R^{n-1}} e^{-\frac{G_z(x')}{\eps}} \, dx' .
  \end{align*}
\end{proposition}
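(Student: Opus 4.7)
The plan is to mirror the argument of \cref{lem:geometric-d}, replacing the distinguished \emph{path} through the bridge by a distinguished \emph{slice} transverse to it. As in that proof I work in the coordinates fixed by $T_z$, shift $F$ so that $F(z)=0$, and abbreviate $g=g_z$, $G=G_z$. The bridge is $O_\delta=\{g(x_1)<\delta\}\times\{G(x')<\delta\}$, and the natural separating surface I expect to be essentially optimal is the flat cross-section
\begin{equation*}
 S_0 := \{x_1=0\}\cap O_\delta = \{0\}\times\{G(x')<\delta\}.
\end{equation*}

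For the upper bound I would first verify that $S_0\in\mathcal{S}(B_\eps(x_a),B_\eps(x_b);\Omega)$. Inside $O_\delta$, the assumption \cref{eq:struc3} together with $\omega(s)\le s/8$ forces $\{F<F(z)-\delta/3\}\cap O_\delta$ to lie in $\{g(x_1)>\delta/4\}$, which splits into exactly two pieces, one in $\{x_1>0\}$ and one in $\{x_1<0\}$; these must connect to different islands $U_a$ and $U_b$ in $\Omega$, since by hypothesis $\Omega$ contains only the single bridge. A continuity argument on any $\gamma\in\mathcal{C}(B_\eps(x_a),B_\eps(x_b);\Omega)$ then forces $\gamma$ to cross $\{x_1=0\}$ while inside $O_\delta$, hence to meet $S_0$. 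On $S_0$ we have $g(0)=0$, so \cref{eq:struc3} gives $F(0,x')\le G(x')+\omega(G(x'))$, and
\begin{equation*}
 \int_{S_0} e^{-F/\eps}\,d\mathcal{H}^{n-1}
 \le e^{-F(z)/\eps}\int_{\{G<\delta\}} e^{-G(x')/\eps}e^{\omega(G(x'))/\eps}\,dx'.
\end{equation*}
\cref{lem:tech-loc} then converts the right-hand side into $(1+\hat\eta(C\eps))\,e^{-F(z)/\eps}\int_{\R^{n-1}}e^{-G/\eps}\,dx'$.

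For the matching lower bound I would fix an arbitrary $S\in\mathcal{S}(B_\eps(x_a),B_\eps(x_b);\Omega)$ and, for each $x'$ with $G(x')<\delta/10$, consider the horizontal segment $\ell_{x'}=\{(x_1,x'):g(x_1)<\delta\}$. Its two endpoints satisfy $F\le -\delta+\delta/10+\omega(\delta)+\omega(\delta/10)<F(z)-\delta/3$, so they lie in $U_{-\delta/3}$; by the component identification above one endpoint is in $U_a$ and the other in $U_b$, hence $\ell_{x'}\in\mathcal{C}(B_\eps(x_a),B_\eps(x_b);\Omega)$ and $S$ must intersect it. Applying the area formula to the projection $\pi(x_1,x')=x'$,
\begin{equation*}
 \int_{S} e^{-F/\eps}\,d\mathcal{H}^{n-1}
 \ge \int_{S\cap O_\delta}|\nu_1|\,e^{-F/\eps}\,d\mathcal{H}^{n-1}
 \ge \int_{\{G<\delta/10\}} e^{-F(x_1(x'),x')/\eps}\,dx',
\end{equation*}
where $(x_1(x'),x')$ is any chosen intersection point. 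Since $\omega(g)\le g/8$ gives $-g(x_1)+\omega(g(x_1))\le 0$, \cref{eq:struc3} yields $F(x_1(x'),x')\le G(x')+\omega(G(x'))$, and a second application of \cref{lem:tech-loc} (with $\delta$ replaced by $\delta/10$, which does not affect its conclusion for small $\eps$) delivers $(1-\hat\eta(C\eps))\,e^{-F(z)/\eps}\int_{\R^{n-1}}e^{-G/\eps}\,dx'$ on the right.

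The only non-routine ingredient is the geometric claim used in both halves: that in the coordinates of $T_z$ the two halves of $O_\delta\setminus\{x_1=0\}$ connect, within $\Omega$, to the two different islands $U_a$ and $U_b$. This is where the non-branching part of \cref{eq:struc3} is essential, since the two low-$F$ components of $O_\delta\cap U_{-\delta/3}$ then exhaust the contact of the bridge with $U_a\cup U_b$. Once this is established, both that $S_0$ fully separates in $\Omega$ and that every horizontal segment $\ell_{x'}$ is itself an admissible connecting path follow immediately, and the rest of the argument is the analytic bookkeeping outlined above.
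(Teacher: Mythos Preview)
Your upper bound is correct and is exactly the paper's argument: the cross-section $S_0=\Gamma_0$ is admissible and its weighted area is handled by \cref{eq:struc3} and \cref{lem:tech-loc}.

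The lower bound, however, has a genuine gap. You assert that the segment $\ell_{x'}$ belongs to $\mathcal{C}(B_\eps(x_a),B_\eps(x_b);\Omega)$, but it does not: its endpoints lie in $U_a$ and $U_b$, not in $B_\eps(x_a)$ and $B_\eps(x_b)$. What is true is that $\ell_{x'}$ can be \emph{extended} through $U_a$ and $U_b$ to an admissible path. The separating property then forces $S$ to meet this extended path, but the intersection may occur entirely in $U_a$ or $U_b$, missing $\ell_{x'}$ altogether. A concrete admissible $S$ for which your area-formula step gives nothing is $S=\partial B_{2\eps}(x_a)$: it separates, yet $S\cap O_\delta=\emptyset$, so the projection argument yields $0$. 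Of course this particular $S$ has enormous weighted area (since $F$ is very negative near $x_a$), so it is far from optimal, but your argument as written never uses near-optimality of $S$ and therefore cannot exclude it.

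This is precisely why the paper takes a different route for the lower bound. It fixes a \emph{near-minimizing} $S$, so that by the already-proven upper bound $\int_S e^{-F/\eps}\,d\mathcal{H}^{n-1}\le C$, and hence $\mathcal{H}^{n-1}(S\cap\{F<-\delta/3\})\le Ce^{-\delta/(4\eps)}$ is tiny. It then replaces the characteristic function $\chi_{\hat U_a}$ of the component containing $x_a$ by its mollification $v_\rho=\tfrac{1}{|B_\rho|}\chi_{\hat U_a}*\chi_{B_\rho}$ with $\rho=\eps^2$, and uses the relative isoperimetric inequality in balls to show $v_\rho\ge 1-C\eps$ on $\Gamma_+\subset U_a$ and $v_\rho\le C\eps$ on $\Gamma_-\subset U_b$. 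The fundamental theorem of calculus along each line $\ell_{x'}$ is then applied to $v_\rho$ (which is Lipschitz) rather than to $\chi_{\hat U_a}$, and $\int_{\hat O}|\nabla v_\rho|\,e^{-F/\eps}$ is bounded by the surface integral via the convolution structure. In effect, the mollification plus isoperimetry is exactly the device that repairs the missing implication ``$S$ meets every $\ell_{x'}$''.
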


\begin{proof}
  Denote  $G_z = G$ for short. As in the proof of \cref{lem:geometric-d} we may assume that $z = 0$, $F(0) = 0$ and that
  \begin{align*}
    O_{\delta} = O_{z, \delta}  = \{ x_1 :g(x_1) <  \delta\} \times \{ x' \in \R^{n-1} : G(x') < \delta\} .
  \end{align*}

  Let us begin by proving the upper bound. In the proof of \cref{lem:geometric-d} we already	observed that the surface $\Gamma_0 = \{ 0\} \times \{ G <  \delta\} $ is in the family of separating surfaces $\Gamma_0 \in \mathcal{S}(B_\eps(x_a),B_\eps(x_b); \Omega)$.	Therefore the assumption  \cref{eq:struc3,lem:tech-loc} together with the definition of $V_\eps$ imply
  \begin{align*}
    \begin{split}
      V_{\eps}(B_\eps(x_a),B_\eps(x_b);\Omega) &\leq \int_{\Gamma_0 } e^{-\frac{F}{\eps}} \, d\mathcal{H}^{n-1} \leq \int_{\{ G < \delta\} } e^{-\frac{G}{\eps}} e^{\frac{\omega(G)}{\eps}} \, dx' \\
      &\simeq \int_{\R^{n-1}} e^{-\frac{G(x')}{\eps}} \, dx'  .
    \end{split}
  \end{align*}
  The upper bound follows directly from this. Moreover by \cref{l:convex:lset} it holds that
  \begin{equation}
    \label{eq:geometric-V-1}
    V_{\eps}(B_\eps(x_a),B_\eps(x_b);\Omega) \leq C |\{ G < \eps\}| \leq C.
  \end{equation}

  In order to prove the lower bound we fix a small $t>0$ and  choose a smooth hypersurface $S \in \mathcal{S}(B_\eps(x_a),B_\eps(x_b);\Omega)$ such that
  \begin{align*}
    \int_{S} e^{-\frac{F}{\eps}} \, d\mathcal{H}^{n-1}   \leq V_{\eps}(B_\eps(x_a),B_\eps(x_b);\Omega) + t.
  \end{align*}
  Then $S$ divides the domain $\Omega$ into two different components, from which we denote the component containing $x_a$ by $\hat U_a$.  Note that then $\partial \hat U_a \cap \Omega \subset S$. Denote $\rho = \eps^2$. We use an idea from \cite{J} and instead of studying the set $\hat U_a$, we study the density
  \begin{align*}
    v_\rho(x) := \frac{|B_\rho(x) \cap \hat U_a|}{|B_\rho|}
  \end{align*}
  which can be written as a convolution, $v_\rho(x) = \frac{1}{|B_\rho|} ( \chi_{ \hat U_a} * \chi_{B_\rho} )$.
  To see why studying $v_\rho$ is relevant, we need some setup that we will present next.
  \begin{figure}
    \begin{center}
      \begin{tikzpicture}

        \draw (-2,-2) .. controls (0,-1) and (0,1) .. (-2,2);
        \draw (-2,1) node {$U_b$};
        \draw (2,-2) .. controls (0,-1) and (0,1) .. (2,2);
        \draw (2,1) node {$U_a$};

        \filldraw[color=gray!60, fill=gray!30] (-2,0) circle (0.5);
        \draw (-2,0) node {$x_b$};
        \draw (-2,-0.8) node {$B_\varepsilon(x_b)$};
        \filldraw[color=gray!60, fill=gray!30] (2,0) circle (0.5);
        \draw (2,0) node {$x_a$};
        \draw (2,-0.8) node {$B_\varepsilon(x_a)$};

        \draw[pattern=north west lines, pattern color=blue] (-0.8,-1.3) rectangle (0.8,1.3);
        \draw[pattern=north east lines, pattern color=blue] (-0.8,-0.4) rectangle (0.8,0.4);
        \draw (0,-1.7) node {$O_{\delta}$};

        \draw[pattern=north west lines, pattern color=blue] (-4,2) rectangle (-3.7,2.3);
        \draw (-4+0.6,2+0.1) node {$O_\delta$};

        \draw[pattern=north west lines, pattern color=blue] (-4,2-0.6) rectangle (-3.7,2.3-0.6);
        \draw[pattern=north east lines, pattern color=blue] (-4,2-0.6) rectangle (-3.7,2.3-0.6);
        \draw (-4+0.6,2+0.1-0.6) node {$\hat O$};

        \draw[pattern=north east lines, pattern color=blue] (-0.8,-0.4) rectangle (0.8,0.4);
      \end{tikzpicture}
    \end{center}
    \caption{The bridge $O_{\delta}$ connects the sets $U_a$ and $U_b$. The smaller bridge $\hat O$ has its lateral boundaries  inside $U_a \cup U_b$.}
    \label{f:localization2}
  \end{figure}
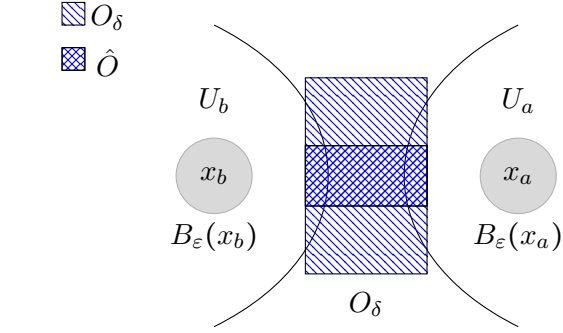
  We choose a  subset $\hat O$ of the bridge $O_{\delta}$ as
  \begin{align*}
    \hat O :=  \{ x_1 :g(x_1) <  \delta\} \times \{ x' \in \R^{n-1} : G(x') < \delta/100\},
  \end{align*}
  see \cref{f:localization2},
  and denote its lateral boundaries by $\Gamma_{-} $ and $\Gamma_{+}$, i.e.,
  \begin{align*}
    \{g=  \delta\} \times \{  G  < \delta/100\} = \Gamma_{-} \cup \Gamma_{+}.
  \end{align*}
  Now since $z \in Z_{x_a,x_b}$, we have $F(z) \leq F(x_a;x_b) + \delta/3$. Using this
  and \cref{eq:struc3} we deduce that $\Gamma_{-} \cup \Gamma_{+} \subset \{F < F(x_a;x_b) -\delta/3\}$ and therefore $\Gamma_{-} \cup \Gamma_{+}  \subset U_a \cup U_b$. Moreover, by relabeling we may assume that $\Gamma_{+} \subset U_a$ and $\Gamma_{-} \subset U_b$.
  Furthermore, by the Lipschitz-continuity of $F$ we have $|F(x) -F(y)| \leq c \eps^2$ for all $ y \in B_\rho(x)$. Note also that for all $x \in \hat O$ and $y \in B_\rho(x)$ it holds $x-y \in \Omega$.

  We will now relate $v_\rho$ to the surface integral of $S$ as follows:
  Recall that the set $\hat U_a$ has smooth boundary in $\Omega$ and thus its characteristic function is a BV-function. In particular, the derivative $|\nabla \chi_{\hat U_a}|$ is a  Radon measure in $\Omega$ and
  \begin{align} \label{e.grad.surface}
    \int_{\Omega} |\nabla \chi_{\hat U_a}| e^{-\frac{F}{\eps}}\, dx =   \int_{\partial \hat U_a \cap \Omega}  e^{-\frac{F}{\eps}}\, d\Ha^{n-1} \leq \int_{S}  e^{-\frac{F}{\eps}}\, d\Ha^{n-1} .
  \end{align}
  Using the definition of $v_\rho$ and the Lipschitzness of $F$ inside $B_\rho(x)$, we may thus estimate
  \begin{align} \label{e.nablarho.nablachi}
    \begin{split}
      \int_{\hat O} |\nabla  v_\rho| e^{-\frac{F(x)}{\eps}} \, dx &\leq \frac{1}{|B_\rho|} \int_{\hat O} e^{-\frac{F(x)}{\eps}}  \int_{\R^n}   |\nabla  \chi_{ \hat U_a}(y) | |\chi_{B _\rho}(x-y)|  \,dy dx \\
      &\leq \frac{1}{|B_\rho|} \int_{\hat O} e^{c \eps}\int_{\R^n}  e^{-\frac{F(y)}{\eps}}    |\nabla  \chi_{ \hat U_a}(y) | |\chi_{B _\rho}(x-y)|  \,dy dx \\
      &\leq (1+C\eps) \int_{\Omega}  |\nabla \chi_{\hat U_a}(y)| e^{-\frac{F(y)}{\eps}}\, dy.
    \end{split}
  \end{align}

  Putting together \cref{e.grad.surface,e.nablarho.nablachi} we see that it is enough to establish a lower bound on the integral of $|\nabla v_\rho|$ in $\hat O$. In order to achieve this, we first claim that for all $x$ such that $B_\rho(x) \subset \Omega $ we have, when $\epsilon$ is small,
  \begin{equation}
    \label{eq:geometric-V-2}
    v_\rho(x) \geq 1-  C \eps \,\, \text{ for }\, x \in  U_a \quad \text{and } \quad  v_\rho(x) \leq C \eps \,\, \text{ for }\,  x \in  U_b.
  \end{equation}
  We now complete the proof of the lower bound, using \cref{eq:geometric-V-2}, followed by the proof of \cref{eq:geometric-V-2}. Assume now \cref{eq:geometric-V-2}. Then we can use the fundamental theorem of calculus to get that for all $x' \in \{  G  < \delta/100\}$
  \begin{equation}
    \label{eq:geometric-V-3}
    1- 2C \eps \leq  \int_{\{g < \delta\}} \partial_{x_1} v_\rho(x_1,x')\, dx_1  .
  \end{equation}
  Now, arguing as in \cref{lem:geometric-d-4}  we conclude that
  \begin{align} \label{e.FGomega}
    F(x) \leq G(x') + \omega(G(x') )\qquad \text{for } \, x \in  O_{\delta}.
  \end{align}
  Multiplying and dividing with $e^{-\frac{F(x)}{\eps}}$ inside the integral in \cref{eq:geometric-V-3} and using \cref{e.FGomega} we get
  \begin{align*}
    (1- 2C \eps) e^{-(G(x')+\omega(G(x')))/\epsilon} \leq  \int_{\{G < \delta/100\}} \int_{\{g < \delta\}} |\partial_{x_1} v_\rho(x_1,x')| e^{-F(x)/\epsilon} \, dx_1.
  \end{align*}
  Integrating over $x' \in \{G < \delta/100\}$ we obtain
  \begin{align*}
    \begin{split}
      (1- 2C \eps) &\int_{\{  G  < \delta/100\}} e^{-\frac{G(x')}{\eps}}e^{-\frac{\omega(G(x'))}{\eps}}  \, dx' \\
      &\leq  \int_{\{  G  < \delta/100\}}  \int_{\{g < \delta\}} |\partial_{x_1} v_\rho(x_1,x')| e^{-\frac{F(x)}{\eps}} \, dx_1 dx'  \\
      &\leq \int_{\hat O} |\nabla  v_\rho| e^{-\frac{F(x)}{\eps}} \, dx.
    \end{split}
  \end{align*}
  The lower bound on the integral on the right hand side follows by \cref{lem:tech-loc}, i.e. we have
  \begin{equation}
    \label{eq:geometric-V-4}
    (1- \eta(\eps))  \int_{\R^{n-1}} e^{-\frac{G(x')}{\eps}}  \, dx'  \leq  \int_{\hat O} |\nabla  v_\rho| e^{-\frac{F(x)}{\eps}} \, dx .
  \end{equation}
  Now, assuming \cref{eq:geometric-V-2}, we may use \cref{eq:geometric-V-4,e.grad.surface,e.nablarho.nablachi} to get the lower bound from
  \begin{align*}
    \begin{split}
      (1- \eta(\eps))  \int_{\R^{n-1}} e^{-\frac{G(x')}{\eps}}  \, dx' &\leq (1+C\eps) \int_{S}  e^{-\frac{F}{\eps}}\, d\Ha^{n-1} \\
      &\leq (1+C\eps) \big(V_{\eps}(B_\eps(x_a),B_\eps(x_b);\Omega) + t\big)
    \end{split}
  \end{align*}
  as $t$ is arbitrarily small. Thus we obtain the lower bound, and hence in order to complete the proof it remains to prove \cref{eq:geometric-V-2}. For this, we fix $x \in  U_a \cup U_b$ such that $B_\rho(x) \subset \Omega $. By the relative isoperimetric inequality (also called Dido's problem, see for instance \cite[Theorem 3.40]{AFP} or \cite{Maz1}) and by $\rho = \eps^2$ it holds
  \begin{align*}
    \begin{split}
      \Ha^{n-1}\big(\partial \hat U_a \cap B_\rho(x)\big) &\geq c \min \big{\{} |B_\rho(x) \cap \hat U_a|^{\frac{n-1}{n} } , |B_\rho(x) \setminus  \hat U_a|^{\frac{n-1}{n} } \big{\}}\\
      &\geq c\, \eps^{2(n-1)} \min \big{\{} v_\rho(x) , 1- v_\rho(x)   \big{\}}^{\frac{n-1}{n}}.
    \end{split}
  \end{align*}
  On the other hand,  since $x \in \{ F < -\delta/3\}$ and thus $ B_\rho(x) \subset \{ F < -\delta/4 \}$,  we have by \cref{eq:geometric-V-1} that
  \begin{align*}
    \begin{split}
      \Ha^{n-1}\big(\partial \hat U_a \cap B_\rho(x)\big) &\leq  e^{-\frac{\delta}{4 \eps}}  \int_{\partial \hat U_a \cap B_\rho(x)} e^{-\frac{F}{\eps}} \, d\Ha^{n-1}  \\
      &\leq  e^{-\frac{\delta}{4 \eps}} \int_{S} e^{-\frac{F}{\eps}} \, d\mathcal{H}^{n-1}   \\
      &\leq  e^{-\frac{\delta}{4 \eps}} \big(V_{\eps}(B_\eps(x_a),B_\eps(x_b);\Omega) + t\big) \\
      &\leq C  e^{-\frac{\delta}{4 \eps}}.
    \end{split}
  \end{align*}
  By combining the two inequalities above we obtain  \cref{eq:geometric-V-2} which completes the whole proof.
\end{proof}

\subsection*{Proof of \cref{thm1}}
We consider parallel case and series case separately.
\subsubsection*{Parallel case:}
Assume that the saddle points in $F_{x_a,x_b} = \{ z_1, \dots, z_N\}$ are parallel, see \cref{f:par:ser}. Let us fix a saddle point $z_i \in F_{x_a,x_b}$ and recall the definition of the bridge $O_{z_i,\delta}$ in \cref{eq:struc4}. As before, by considering $F-F(z_i)$ instead of $F$, we may assume
\begin{align*}
  z_i = 0, \quad F(0) = 0
\end{align*}
and
\begin{align*}
  O_{\delta} := O_{0,\delta} = \{ x_1 \in \R : g(x_1) < \delta\} \times  \{ x' \in \R^{n-1} : G(x') < \delta\}.
\end{align*}
We also recall the notation $U_{-\delta/3} = \{ F < F(x_a;x_b) -\delta/3\}$.  We denote the island, i.e., the component of $U_{-\delta/3}$, which contains the point $x_a$ by $U_a$ and the island which contains the point $x_b$ by $U_b$.   Since the saddle points are parallel, the bridge $O_{\delta}$ connects the islands $U_a$ and $U_b$ which means that  the set $\Omega  = U_a \cup U_b \cup O_{\delta} $ is open and connected. By \cref{l:hab:levelset:rough} we have
$\text{osc}_{U_a}(h_{A,B}) + \text{osc}_{U_b}(h_{A,B}) \leq C \eps$. Since $h_{A,B} = 1$ in $B_\eps(x_a) \subset U_a$  and $h_{A,B} = 0$ in $B_\eps(x_b) \subset U_b$, it follows that
\begin{equation}
  \label{eq:proof-thm1-1}
  h_{A,B} \geq 1- C\eps \,\, \text{in } \, U_a \quad \text{and} \quad h_{A,B} \leq   C\eps \,\, \text{in } \, U_b.
\end{equation}

Let us choose a  subset $\hat O$ of the bridge $O_{\delta}$ as in the proof of \cref{lem:geometric-V} (see \cref{f:localization2}), i.e.
\begin{align*}
  \hat O :=  \{ x_1 :g(x_1) <  \delta\} \times \{ x' \in \R^{n-1} : G(x') < \delta/100\},
\end{align*}
and denote its lateral boundaries by $\Gamma_{-} \subset \{ x_1 <0\}$ and $\Gamma_{+}\subset \{ x_1 >0\} $, i.e.,
\begin{align*}
  \{g =  \delta\} \times \{  G  < \delta/100\} = \Gamma_{-} \cup \Gamma_{+}.
\end{align*}
Then by using \cref{eq:struc3}  and arguing as in the proof of \cref{lem:geometric-V}  we deduce that  $\Gamma_{-} , \Gamma_{+} \subset \{ F < F(x_a;x_b) -\delta/3\}$ and we may assume  $\Gamma_{+} \subset U_a$ and $\Gamma_{-} \subset U_b$.  Therefore, by \cref{eq:proof-thm1-1}, we have
that $h_{A,B} \leq C\eps $ on $\Gamma_- $ and $h_{A,B} \geq 1-  C\eps$ on $\Gamma_+$. Now, by the fundamental theorem of calculus and Cauchy-Schwarz inequality, it holds that
\begin{equation}
  \label{eq:proof-thm1-3}
  \begin{split}
    1- 2C\eps &\leq \int_{\{ g < \delta\}} \partial_{x_1} h_{A,B}(x) \,dx_1  =  \int_{\{ g <  \delta\}}\partial_{x_1} h_{A,B}(x) e^{-\frac{F(x)}{2\eps}} e^{\frac{F(x)}{2\eps}} \,dx_1  \\
    &\leq \left( \int_{\{ g <  \delta\}} |\nabla h_{A,B}(x)|^2 e^{-\frac{F(x)}{\eps}} \,dx_1 \right)^{\frac12} \left(  \int_{\{ g <  \delta\}} e^{\frac{F(x)}{\eps}} \,dx_1\right)^{\frac12}.
  \end{split}
\end{equation}
Let us next estimate the last term above. By assumption \cref{eq:struc3} we have, for $x \in \hat O$,
\begin{align*}
  F(x) \leq -g(x_1) + \omega(g(x_1) ) + G(x') + \omega(G(x')).
\end{align*}
Therefore, by \cref{lem:tech-loc,lem:geometric-d}, we can estimate
\begin{equation}
  \label{eq:proof-thm1-4}
  \begin{split}
    \int_{\{ g <  \delta\}}  e^{\frac{F(x)}{\eps}} \,dx_1 &\leq (1 + \hat \eta(C \eps))e^{\frac{G(x')}{\eps}} e^{\frac{\omega(G(x'))}{\eps}}  \int_{\{ g <  \delta\}}  e^{-\frac{g(x_1)}{\eps}} e^{\frac{ \omega(g(x_1))}{\eps}} \,dx_1 \\
    &\leq (1 + \hat \eta(C \eps))e^{\frac{G(x')}{\eps}} e^{\frac{\omega(G(x'))}{\eps}}   \int_{\R}  e^{-\frac{g(x_1)}{\eps}}  \,dx_1 \\
    &\leq (1 + \hat \eta(C \eps))e^{\frac{G(x')}{\eps}} e^{\frac{\omega(G(x'))}{\eps}}     d_\eps(B_\eps(x_a),B_\eps(x_b); \Omega).
  \end{split}
\end{equation}
We combine the inequalities \cref{eq:proof-thm1-3} and \cref{eq:proof-thm1-4} leading to (for another constant $C$)
\begin{align*}
  \int_{\{ g <  \delta\}} |\nabla h_{A,B}(x)|^2 e^{-\frac{F(x)}{\eps}} \,dx_1 \geq \frac{(1 - \hat \eta(C \eps)) }{ d_\eps(B_\eps(x_a),B_\eps(x_b); \Omega)} e^{-\frac{G(x')}{\eps}}  e^{-\frac{\omega(G(x'))}{\eps}}
\end{align*}
for all $x' \in \{ G < \delta/100\}$. By integrating over $x' \in \{ G < \delta/100\}$ we have by Fubini's theorem, \cref{lem:tech-loc,lem:geometric-V}, that
\begin{equation}
  \label{eq:proof-thm1-5}
  \begin{split}
    \int_{\hat O } |\nabla h_{A,B}|^2 e^{-\frac{F(x)}{\eps}} \,dx &\geq  \frac{(1 - \hat \eta(C \eps)) }{ d_\eps(B_\eps(x_a),B_\eps(x_b); \Omega)} \int_{\{ G < \delta/100\}} e^{-\frac{G(x')}{\eps}} e^{-\frac{\omega(G(x'))}{\eps}}     \, dx'\\
    &\geq   \frac{(1 - \hat \eta(C \eps)) }{ d_\eps(B_\eps(x_a),B_\eps(x_b); \Omega)} \int_{\R^{n-1}} e^{-\frac{G(x')}{\eps}}   \, dx' \\
    &\geq (1 - \hat \eta(C \eps)) \frac{V_\eps(B_\eps(x_a),B_\eps(x_b); \Omega) }{ d_\eps(B_\eps(x_a),B_\eps(x_b); \Omega)}.
  \end{split}
\end{equation}
Therefore, by repeating the argument for every saddle $z_i \in Z_{x_a, x_b}$ and using the fact that the bridges  $O_{z_i, \delta}$ are disjoint, we obtain after scaling back the potential
\begin{align*}
  \begin{split}
    \int_{\R^n} |\nabla h_{A,B}|^2 e^{-\frac{F(x)}{\eps}} \,dx\geq  (1 - \hat \eta(C \eps))\sum_{i=1}^N  \frac{V_\eps(B_\eps(x_a),B_\eps(x_b); \Omega) }{d_\eps(B_\eps(x_a),B_\eps(x_b); \Omega)} e^{\frac{F(z_i)}{\eps}}.
  \end{split}
\end{align*}
This yields the lower bound when the saddle points are parallel.

For the upper bound, we only give a sketch of the argument as it is fairly straightforward. The idea is to contruct a competitor $h$ in the variational characterization of the capacity, see \cref{e:cap:precise}. Let us first define $h$ in the set $U_{\delta/3} = \{ F < F(x_a;x_b) + \delta/3\}$.
Since the saddle points $ Z_{x_a, x_b} = \{ z_1, \dots, z_N\}$ are parallel, it follows that the points $x_a$ and $x_b$ lie in different components of the set
\begin{align*}
  \tilde U = U_{\delta/3}  \setminus \bigcup_{i=1}^N O_{z_i,\delta},
\end{align*}
where $O_{z_i,\delta}$ is defined in \cref{eq:struc5}. Denote the components of $\tilde U$ containing $x_a$ and $x_b$ by $\tilde U_a$ and $\tilde U_b$, respectively. We define first
\begin{align*}
  h = 1 \, \text{ in } \, \tilde U_a \quad \text{and} \quad h = 0 \, \text{ in } \, \tilde U_b.
\end{align*}

Let us next fix a saddle point $z_i \in Z_{x_a, x_b}$. As before, we may again assume that
\begin{align*}
  z_i = 0, \quad F(0) = 0
\end{align*}
and
\begin{align*}
  O_{\delta} := O_{0,\delta} = \{ x_1 \in \R : g(x_1) < \delta\} \times  \{ x' \in \R^{n-1} : G(x') < \delta\}.
\end{align*}
Moreover, we may assume that
\begin{align*}
  \tilde U_a \cap \partial O_{\delta}  \subset \{ x_1 > 0\} \quad \text{ and} \quad
  \tilde U_b \cap \partial O_{\delta}  \subset \{ x_1 < 0\}.
\end{align*}
Let $c_- <0 < c_+ $  be numbers such that $g(c_-) = g(c_+) = \delta/100$. We define $h(x) = \varphi(x_1)$ in $O_{\delta}$ such that the function  $\varphi :[c_-,c_+]\to \R$ is a 			solution of the ordinary differential equation
\begin{align*}
  \frac{d}{ds} \left(\varphi'(s) e^{\frac{g(s)}{\eps}} \right) = 0 \quad \text{in } \, (c_-,c_+)
\end{align*}
with boundary values $\varphi(c_-) = 0$ and $\varphi(c_+) = 1$. We extend $\varphi$ into $\R$ by setting $\varphi(s) = 0$ for $s \leq c_-$ and $\varphi(s) = 1$ for $s \geq c_+$. It follows that for the function $h$ we have, by construction,  \cref{lem:tech-loc}, and an argument similar to the one leading to \cref{eq:proof-thm1-3}, that
\begin{align*}
  \begin{split}
    \int_{O_{\delta} } &|\nabla h|^2 e^{-\frac{F(x)}{\eps}} \, dx  \\
    &\leq \int_{\{ g<  \delta\} } | \varphi'(x_1)|^2 e^{\frac{g(x_1)}{\eps}}e^{\frac{\omega(g(x_1))}{\eps}}  \, dx_1   \int_{\{ G< \delta\} }
    e^{-\frac{G(x')}{\eps}} e^{\frac{\omega(G(x'))}{\eps}}\, dx' \\
    &\leq (1+ \hat \eta(C \eps)) \left( \int_{\R}  e^{-\frac{g(x_1)}{\eps}} \, dx_1\right)^{-1} \left(  \int_{\R^{n-1}}  e^{-\frac{G(x')}{\eps}} \, dx'\right).
  \end{split}
\end{align*}
By repeating the construction for every saddle point $z_i \in Z_{x_a, x_b}$, we obtain a function which is defined in $U_{\delta/3}$. We denote this function by $h  : U_{\delta/3} \to \R$. Note that now for $h$  the estimate  \cref{eq:proof-thm1-5} is optimal. Moreover, $h$ is Lipschitz continuous. We extend $h$ to $\R^n$ without increasing the Lipschitz constant $L$, e.g., by defining
\begin{align*}
  h(x) = \sup_{y \in U_{\delta/3}} \big( h(y) -L|x-y|\big) \quad \text{for } \, x \in \R^n \setminus U_{\delta/3}.
\end{align*}
This finally leads to the upper bound completing the proof of the parallel case, while we leave the final details on the upper bound for the reader.
\subsubsection*{Series case:}
Assume that the saddle points $Z_{x_a, x_b} = \{ z_1, \dots, z_N\}$  are in series, see \cref{f:par:ser}. We use the ordering as in  \cref{eq:struc8} and denote the points $x_i$ as in \cref{eq:struc9}. We also fix the islands, $U_{x_{i-1}}$ and $U_{x_i}$ (components of $\{ F < F(x_a; x_b) -\delta/3\}$), which are connected by the bridge $O_{z_i,  \delta}$. Again we may assume that
$z_i = 0$,  $F(0) = 0$ and that
\begin{align*}
  O_{ \delta}  =O_{z_i,  \delta} = \{x_1:  g(x_1) <  \delta \} \times \{x':  G(x_1) <  \delta \}.
\end{align*}
By \cref{l:hab:levelset:rough} we have
$\text{osc}_{U_{x_{i-1}}}(h_{A,B}) + \text{osc}_{U_{x_{i}}}(h_{A,B}) \leq C \eps$. Therefore there are numbers $c_{i-1},c_i$ such that
\begin{align*}
  h_{A,B} \geq c_{i-1} -  C\eps \,\, \text{in } \,U_{x_{i-1}} \quad \text{and} \quad h_{A,B} \leq c_i  +  C\eps \,\, \text{in } \, U_{x_{i}}.
\end{align*}
Then, using the fundamental theorem of calculus as in \cref{eq:proof-thm1-3}, we obtain
\begin{align*}
  \begin{split}
    |c_{i-1}-c_i|- 2C\eps \leq \left( \int_{\{g<\delta\} } |\nabla h_{A,B}(x)|^2 e^{-\frac{F(x)}{\eps}} \,dx_1 \right)^{\frac12} \left(  \int_{\{g<\delta\} }  e^{\frac{F(x)}{\eps}} \,dx_1\right)^{\frac12}
  \end{split}
\end{align*}
for
\begin{align*}
  (x_1,x') \in \{ g <\delta\} \times \{ G <\delta/100\}.
\end{align*}
Moreover, arguing as in \cref{eq:proof-thm1-4}, we have
\begin{align*}
  \int_{\{g<\delta\} }  e^{\frac{F(x)}{\eps}} \,dx_1 \leq (1 + \hat \eta(C \eps))e^{\frac{G(x')}{\eps}}e^{\frac{\omega(G(x'))}{\eps}}    d_\eps(x_{i-1},x_i).
\end{align*}
These together imply
\begin{align*}
  \int_{\{g<\delta\}} |\nabla h_{A,B}(x)|^2 e^{-\frac{F(x)}{\eps}} \,dx_1 \geq \frac{(1 - \hat \eta(C \eps)) (c_{i-1}-c_i)^2}{   d_\eps(x_{i-1},x_i)} e^{-\frac{G(x')}{\eps}}e^{-\frac{\omega(G(x'))}{\eps}}    .
\end{align*}
By integrating over $x' \in \{ G < \delta/100\}$ we have, by Fubini's theorem,  \cref{lem:tech-loc},  and \cref{lem:geometric-V}, that
\begin{align*}
  \int_{O_{\delta}} |\nabla h_{A,B}|^2 e^{-\frac{F(x)}{\eps}} \,dx \geq  (1 - \hat \eta(C\eps)) (c_{i-1}-c_i)^2  \frac{V_\eps(x_{i-1},x_i) }{ d_\eps(x_{i-1},x_i)}.
\end{align*}
By repeating the argument for every saddle $z_i \in Z_{x_a, x_b}$ and  using the fact that the sets $O_{z_i, \delta}$ are disjoint we obtain
\begin{align*}
  \int_{\R^n} |\nabla h_{A,B}|^2 e^{-\frac{F(x)}{\eps}} \,dx\geq  (1 - \hat \eta(C \eps)) \sum_{i=1}^N (c_{i-1}-c_i)^2  \frac{V_\eps(x_{i-1},x_i) }{ d_\eps(x_{i-1},x_i)}.
\end{align*}
Recall that the numbers $c_i$ are the approximate values of $h_{A,B}$ in the components $U_{x_{i}}$. Therefore we may choose them such that  $1 = c_0$ and $c_{N} =0$.
By denoting $y_i = c_{i-1}-c_i$ and $a_i = \frac{V_\eps(x_{i-1},x_i) }{ d_\eps(x_{i-1},x_i)}$  we may write
\begin{align*}
  \sum_{i=1}^N (c_{i-1}-c_i)^2  \frac{V_\eps(x_{i-1},x_i) }{ d_\eps(x_{i-1},x_i)} =  \sum_{i=1}^N a_i y_i^2,
\end{align*}
where we have a constraint $\sum_{i=1}^N y_i=1$. By a standard optimization argument (using Lagrange multipliers) we get that under such a constraint it holds that
\begin{align*}
  \sum_{i=1}^N a_i y_i^2   \geq \left( \sum_{i=1}^N \frac{1}{a_i}   \right)^{-1}.
\end{align*}
This yields the lower bound in the case when the  saddle points are in series. The upper bound on the other hand follows from a similar argument than in the parallel case, and we leave the details for the reader. This completes the proof in the series case, and hence the whole proof.

\hfill\qedsymbol

\subsection*{Proof of \cref{thm2}}

Let us first recall the notation related to \cref{thm2}. We assume that the local minimas $x_i$ of $F$ are  ordered such that $F(x_i) \leq F(x_j)$ if $i \leq j$, and they are grouped into sets $G_i$ such that $x_i, x_j \in G_k$ if $F(x_i) = F(x_j)$ and $x \in G_i$,  $y \in G_j$ with $i <j$ if $F(x) < F(y)$. We also denoted $F(G_i) := F(x)$ with $x \in G_i$, $S_k = \bigcup_{i=1}^k G_i$,   $G_k^\varepsilon = \bigcup_{x \in G_k} B_\varepsilon(x)$, and $S_k^\varepsilon = \bigcup_{i=1}^k G_i^\eps$.

The proof of \cref{thm2} follows from the following lemma together with \cref{lem:POEP_ratio} and \cref{thm1}.
\begin{lemma} \label{lem:l1}
  Under the assumptions of \cref{thm2}, there exists constants $C=C(F) > 1$ and $\varepsilon=\varepsilon_0(F) \in (0,1)$ such that, for all $0 < \varepsilon \leq \varepsilon_0$, we have
  \begin{align*}
    \frac{1}{C}  \sum_{x \in G_{k+1}} |O_{x,\varepsilon}| \leq e^{\frac{F(G_{k+1})}{\varepsilon}} \int h_{G_{k+1}^\varepsilon,S_k^\varepsilon} d\mu_\epsilon \leq C  \sum_{x \in G_{k+1}} |O_{x,\varepsilon}|.
  \end{align*}
\end{lemma}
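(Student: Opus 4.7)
Write $h := h_{G_{k+1}^\eps, S_k^\eps}$, so that $0 \leq h \leq 1$ with $h \equiv 1$ on $G_{k+1}^\eps$ and $h \equiv 0$ on $S_k^\eps$. The plan is to show that the integral $\int h\, d\mu_\eps$ is, up to a multiplicative constant, captured by the contributions from the disjoint neighborhoods $\{O_{x,\delta}\}_{x\in G_{k+1}}$, where simultaneously $h$ is close to $1$ and the Gibbs weight is of order $e^{-F(G_{k+1})/\eps}$. Near any other local minimum of $F$, either $h$ is exponentially small (lower minima) or the Gibbs weight is exponentially small compared with $e^{-F(G_{k+1})/\eps}$ (higher minima); away from all minima, $F$ is strictly larger than $F(G_{k+1})$, so the Gibbs measure itself is negligible.

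Two ingredients feed into this. First, a Laplace-type estimate valid for any local minimum $z$ of $F$: assumption \cref{eq:struc1} together with \cref{lem:tech-loc} and \cref{l:convex:lset} gives
\begin{align*}
\int_{O_{z,\delta}} e^{-F/\eps}\, dx \;\simeq\; e^{-F(z)/\eps} |O_{z,\eps}|.
\end{align*}
Second, pointwise control of $h$ near each minimum: a straightforward adaptation of the Poincar\'e-Harnack argument of \cref{l:hab:levelset:rough} to the multi-minimum setting yields $h \geq 1 - C\eps$ on $O_{x,\delta}$ for every $x \in G_{k+1}$; for $y \in S_k$, the level gap \cref{eq:struc10} combined with \cref{l:hab:bound} gives $h(y) \leq C\eps^q e^{-\delta_2/\eps}$, and the same oscillation argument upgrades this to $h \leq C\eps^q e^{-\delta_2/\eps}$ throughout $O_{y,\delta}$.

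With these two ingredients, the lower bound follows immediately by restricting the integral to $\bigsqcup_{x\in G_{k+1}} O_{x,\delta}$ and applying $h \geq 1 - C\eps$. For the upper bound, decompose $\R^n = \bigcup_{z} O_{z,\delta}\, \cup\, R$, where $z$ ranges over all local minima of $F$ and $R$ is the complement. The $G_{k+1}$ neighborhoods give the main term of size $Ce^{-F(G_{k+1})/\eps}\sum_{x\in G_{k+1}}|O_{x,\eps}|$; the $S_k$ neighborhoods pay an extra factor $\eps^q e^{-\delta_2/\eps}$ coming from the bound on $h$ above; neighborhoods of higher-level minima pay an extra factor $e^{-\delta_2/\eps}$ through the Gibbs weight alone; and on $R$, a compactness argument inside any level set $\{F \leq F(G_{k+1}) + \delta_2\}$ (together with the quadratic growth \cref{eq:struc0} at infinity) yields $F \geq F(G_{k+1}) + c(\delta)$ on $R$, making $\int_R h\, d\mu_\eps$ exponentially smaller than the main term. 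The hard part is the adaptation of \cref{l:hab:levelset:rough}: one needs to check that each $O_{x,\delta}$ with $x\in G_{k+1}$ is contained in a connected component of a suitable sublevel set of $F$ that is disjoint from $S_k^\eps$, which in turn requires $\delta$ to be small compared with the strictly positive gap between $F(G_{k+1})$ and the lowest saddle separating $G_{k+1}$ from $S_k^\eps$. Once this topological separation is in place (which is guaranteed by \cref{eq:struc3} and the proper-minimum assumption for sufficiently small $\delta$), the Poincar\'e + Harnack + Moser scheme of \cref{l:hab:levelset:rough} transfers essentially verbatim.
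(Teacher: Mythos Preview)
Your lower bound is fine, and the overall strategy of separating the contributions of different minima is the right instinct. The upper bound, however, has a genuine gap in two related places.

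First, the claim that $F \geq F(G_{k+1}) + c(\delta)$ on the remainder $R = \R^n \setminus \bigcup_{z} O_{z,\delta}$ is false. Consider two minima $y_1,y_2 \in S_k$ joined by a saddle at height strictly below $F(G_{k+1})$ (nothing in the hypotheses forbids this). Points near that saddle are outside every $O_{z,\delta}$ once $\delta$ is small, yet have $F$ well below $F(G_{k+1})$. So your compactness argument cannot give a uniform lower bound on $F$ over $R$.

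Second, for $y\in G_k$ your bound $h \leq C\eps^{q}e^{-\delta_2/\eps}$ on $O_{y,\delta}$ is correct but not sharp enough. The Gibbs weight on $O_{y,\delta}$ is of order $e^{-F(y)/\eps}|O_{y,\eps}|$ with $F(y)\leq F(G_{k+1})-\delta_2$, so after multiplication the factors $e^{-\delta_2/\eps}$ and $e^{\delta_2/\eps}$ cancel and you are left with $C\eps^{q}e^{-F(G_{k+1})/\eps}|O_{y,\eps}|$. Since $q$ from \cref{l:hab:bound} may be negative and there is no a~priori relation between $|O_{y,\eps}|$ and $\sum_{x\in G_{k+1}}|O_{x,\eps}|$, this term is not dominated by the main one.

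Both problems stem from bounding $h$ and $e^{-F/\eps}$ separately. The paper instead decomposes along the sublevel set $U_{-\delta_2/3}=\{F<F(G_{k+1};S_k)-\delta_2/3\}$ and, on each component meeting $S_k$, bounds the \emph{product} via \cref{l:hab:bound}:
\[
h(y)\,e^{-F(y)/\eps}\;\leq\; C\eps^{q}\,e^{-f(y)/\eps},\qquad f(y):=F(y;G_{k+1})-F(y;S_k)+F(y).
\]
A short case analysis of the minimizer of $f$ on such a component shows $\inf f \geq F(G_{k+1})+\tfrac23\delta_2$, which simultaneously absorbs the internal saddles of $S_k$ (your issue with $R$) and provides the missing exponential gain near $G_k$. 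If you want to keep your minima-neighborhood decomposition, you would need to replace the uniform $e^{-\delta_2/\eps}$ bound on $h$ by the pointwise one above and extend it over the full components of $\{F<F(G_{k+1};S_k)-\delta_2/3\}$ rather than just the sets $O_{y,\delta}$; but at that point the argument essentially coincides with the paper's.
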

We prove \cref{thm2} first, while the proof of \cref{lem:l1} is given later on.
\begin{proof}[Proof of \cref{thm2}]
  Using \cref{lem:POEP_ratio} and choosing $\rho = \varepsilon$ we obtain that, for $\varepsilon$ small enough and $x \in G_{k+1}^\varepsilon $, that
  \begin{align*}
    \E^x[\tau_{S_k^\varepsilon } \I_{\tau_{S_k^\varepsilon } < \tau_{\Omega^c}}]
    \leq C
    \frac{{\displaystyle \int} h_{{S_k^\varepsilon},{\Omega^c}} h_{G_{k+1}^\varepsilon,{S_k^\varepsilon}} d\mu_\varepsilon}{\capacity{G_{k+1}^\varepsilon}{{S_k^\varepsilon}}}
    + C \varepsilon^{\alpha/2}.
  \end{align*}
  The ratio above can be estimated by using \cref{lem:l1} and the monotonicity of the capacity. That is, the numerator can be bounded by \cref{lem:l1}, while for the capacity we have
  \begin{align} \label{eq:capacity:reduction}
    \capacity{G_{k+1}^\varepsilon }{{S_k^\varepsilon }} \geq \capacity{G_{k+1}^\varepsilon }{{G_k^\varepsilon}} \geq \max_{x \in G_k, y \in G_{k+1} } \capacity{B_\varepsilon(x)}{{B_\varepsilon(y)}}.
  \end{align}
  The claim for the parallel and series cases now follows by assuming that the maximum is attained for a pair of minimas $x_a \in G_k$, $x_b \in G_{k+1}$ and applying \cref{thm1}.
\end{proof}

\begin{remark}
  We note that in the general case, the last inequality in \cref{eq:capacity:reduction} has the optimal dependence with respect to $\eps$ but the inequalities may differ by a constant. Essentially the inequality is sharp only in the case where only one saddle contributes to the total value of the capacity. Hence we have the sharp estimate when saddle points are parallel or in series, but in general the situation might be more complicated than that. We have illustrated this in \cref{f:capacity}, where each gray dot is a saddle at the same height, and $A,B$ produces $G_k$. Then the precise value of $\capacity{G_{k+1}^\varepsilon }{A \cup B}$ is already non-trivial to calculate.
\end{remark}

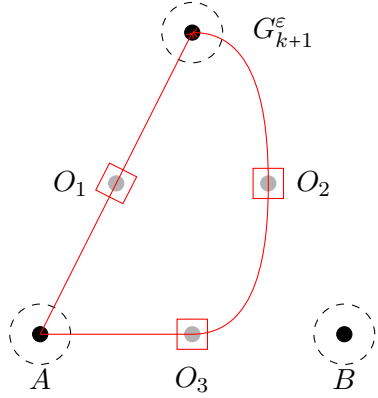
\begin{figure}
  \begin{center}
    \begin{tikzpicture}[scale=2]
      \filldraw (0,2) circle (0.05);
      \draw[dashed] (0,2) circle (0.2);
      \draw (0.6,2) node {$G_{k+1}^\varepsilon $};
      \filldraw (-1,0) circle (0.05);
      \draw[dashed] (-1,0) circle (0.2);
      \draw (-1,-0.3) node {$A$};
      \filldraw (1,0) circle (0.05);
      \draw[dashed] (1,0) circle (0.2);
      \draw (1,-0.3) node {$B$};

      \filldraw[color=gray!60] (-0.5,1) circle (0.05);
      \draw (-0.5-0.3,1) node {$O_1$};
      \filldraw[color=gray!60] (0.5,1) circle (0.05);
      \draw (0.5+0.3,1) node {$O_2$};
      \filldraw[color=gray!60] (0,0) circle (0.05);
      \draw (0,-0.3) node {$O_3$};

      \draw[->,color=red] (-1,0) to[out=0,in=-180] (0,0) to[out=0,in=-90] (0.5,1) to[out=90,in=0] (0,2);
      \draw[->,color=red] (-1,0) to (0,2);


      \draw[red,rotate around={63:(-0.5,1)}] (-0.5-0.1,1+0.1) rectangle (-0.5+0.1,1-0.1);
      \draw[red,rotate around={0:(0.5,1)}] (0.5-0.1,1+0.1) rectangle (0.5+0.1,1-0.1);
      \draw[red,rotate around={0:(0,0)}] (-0.1,0.1) rectangle (0.1,-0.1);

    \end{tikzpicture}
  \end{center}
  \caption{Geometric view of multiple minima at same height and multiple saddles at the same height}
  \label{f:capacity}
\end{figure}

\begin{proof}[Proof of \cref{lem:l1}]

  First we will prove a localization estimate for exponential integrals. Consider a set $0 \in O$ and a function $f$ such that $f(0) = l$ is a proper local minimum and that $f$ is locally convex around $0$. Then there exists an $\varepsilon_0$ such that, for any $\varepsilon < \varepsilon_0$,
  \begin{align} \label{e:loc:expint}
    \int_O e^{-f(x)/\varepsilon} dx \leq C e^{-l/\varepsilon}|\{f < \varepsilon\} \cap O|.
  \end{align}
  We will first prove \cref{e:loc:expint} and then repeatedly apply it to prove \cref{lem:l1}.

  In order to prove \cref{e:loc:expint}, we begin by rescaling such that $l = 0$. Then we extend $f$ outside $O$ as $+\infty$ and call this extended function $\hat f$. We first prove
  \begin{align*}
    \int_{\{\hat f > a\}} e^{-\hat f(x)/\varepsilon} dx \leq c e^{-a/\varepsilon}
  \end{align*}
  which, by the definition of $\hat f$, is equivalent to
  \begin{align*}
    \int_{\{f > a\} \cap O} e^{-f(x)/\varepsilon} dx \leq c e^{-a/\varepsilon}.
  \end{align*}
  This now follows from \cref{l:loc:convex} by using $|O| < \infty$ and observing that $\hat f$ satisfies the assumptions of \cref{l:loc:convex}. Hence we observe \cref{e:loc:expint}.

  Consider now the set
  \begin{align*}
    U_{-\delta_2/3} \equiv \{y: F(y) \leq F(G_{k+1} ;S_k )-\delta_2/3\}
  \end{align*}
  and let $U_i$ be the component of $U_{-\delta_2/3}$ containing $x_i$. We split
  \begin{align*}
    \int h_{G_{k+1}^\varepsilon,S_k^\varepsilon} e^{-F/\varepsilon} dx = \int_{U_{-\delta_2/3}^c}h_{G_{k+1}^\varepsilon,S_k^\varepsilon} e^{-F/\varepsilon} dx + \int_{U_{-\delta_2/3} \setminus S_k^\varepsilon }h_{G_{k+1}^\varepsilon,S_k^\varepsilon} e^{-F/\varepsilon} dx,
  \end{align*}
  where complement is understood with respect to the domain $\Omega$.
  By assumptions \eqref{eq:struc1} and \eqref{eq:struc3} on $F$ it holds that
  \begin{equation} \label{eq:lead00}
    F(G_{k+1} ;S_k ) \geq F(G_{k+1}) + \tfrac{2}{3} \delta_2.
  \end{equation}
  Also by the quadratic growth \eqref{eq:struc0} we can bound the first integral as
  \begin{align*}
    \int_{U_{-\delta_2/3}^c}h_{G_{k+1}^\varepsilon,S_k^\varepsilon} e^{-F/\varepsilon} dx \leq C e^{-(F(G_{k+1} ;S_k )-\delta_2/3)/\varepsilon} \leq C e^{-\frac{\delta_2}{3\eps}} e^{-F(G_{k+1})/\varepsilon},
  \end{align*}
  which shows that the first integral is neglible in the final estimate. For the second integral we further split
  \begin{align*}
    \int_{U_{-\delta_2/3} \setminus S_k^\varepsilon }h_{G_{k+1}^\varepsilon,S_k^\varepsilon} e^{-F/\varepsilon} dx = \sum_{i} \int_{U_i \setminus S_k^\varepsilon }h_{G_{k+1}^\varepsilon,S_k^\varepsilon} e^{-F/\varepsilon} dx.
  \end{align*}

  We will now consider all the different components $U_i$ depending on what minimas they contain. We start with the components $U_i$ that do not intersect $S_k^\varepsilon \cup G_{k+1}^\varepsilon$. Then all local minimas in $U_i$ are larger than  $F(G_{k+1})$, and hence from \cref{e:loc:expint} we get that there exists a constant $C$ such that
  \begin{align*}
    \int_{U_i} h_{G_{k+1}^\varepsilon,S_k^\varepsilon} e^{-F/\varepsilon} dx \leq C e^{-F(G_{k+2})/\varepsilon} \leq Ce^{-\delta_2/\varepsilon} e^{-F(G_{k+1})/\varepsilon},
  \end{align*}
  where the last inequality follows from \eqref{eq:struc10}. This shows that also this term is neglible.  Consider next the component $U_i$ that intersects $G^\varepsilon_{k+1}$ but do not intersect $S_k^\varepsilon $. In this case, by \cref{e:loc:expint} and \cref{l:convex:lset,l:hab:levelset:rough}, we have
  \begin{align} \label{eq:lead1}
    \frac{1}{C} \sum_{x \in G_{k+1}  \cap U_i} |O_{x,\varepsilon}| \leq e^{\frac{F(G_{k+1})}{\varepsilon}} \int_{U_i} h_{G_{k+1}^\varepsilon,S_k^\varepsilon} e^{-F/\varepsilon} dx
    \leq C \sum_{x \in G_{k+1}  \cap U_i} |O_{x,\varepsilon}|
  \end{align}
  providing us the leading term that contributes to the final estimate.

  Consider next a component $U_i$ such that $U_i \cap S_k^\varepsilon  \neq \emptyset$. Since $U_i$ is a component of $U_{-\delta_2/3}$, it follows from $U_i \cap S_k \neq \emptyset$ that  $F(y;S_k) \leq F(y;G_{k+1})-\delta_2/3 \leq F(y;G_{k+1})$ in $U_i$.   Therefore we have, by \cref{l:hab:bound}, in $U_i$ that
  \begin{align*}
    h_{G_{k+1}^\varepsilon,S_k^\varepsilon} \leq C \varepsilon^{q} e^{-(F(y;G_{k+1} )-F(y;S_k ))/\varepsilon}.
  \end{align*}
  Hence, for $q\in \R$, we obtain
  \[
  \int_{U_i } h_{G_{k+1}^\varepsilon,S_k^\varepsilon} e^{-F/\varepsilon} dx \leq   \varepsilon^{q}  \int_{U_i }e^{-(F(y;G_{k+1})-F(y;S_k))/\varepsilon} e^{-F(y)/\varepsilon} dy.
  \]
  In order to compute the integral on the right hand side we study the infimum value of the function $f(y) = F(y;G_{k+1} )-F(y;S_k )+F(y)$. Clearly, the infimum is attained at an interior point of $U_i$, denoted by $x_i$. It follows that then $x_i$ is necessarily a local minimum point of $F$. By above considerations, we also have $F(y;S_k ) < F(y;G_{k+1})$ for all $y \in U_i$, and thus we may deduce that $x_i \notin G_{k+1}$. If now $x_i \in S_k$,  then $F(x_i) = F(x_i;S_k)$ and thus, by the definition of $f$ and by \eqref{eq:lead00},
  \begin{align*}
    \inf_{U_i} f(y) = f(x_i) \geq F(x_i;G_{k+1}) \geq F(S_k ;G_{k+1}) \geq  F(G_{k+1}) + \tfrac{2}{3} \delta_2.
  \end{align*}
  It remains to study the case where $x_i \in G_j$ for some $j \geq k+2$. In this case we apply
  \begin{align*}
    \inf_{U_i} f(y) =  f(x_i) = \overbrace{F(x_i;G_{k+1})-F(x_i;S_k)}^{\geq 0}+F(G_{j})  \geq  F(G_{k+2}) \geq F(G_{k+1}) + \delta_2,
  \end{align*}
  where the last inequality follows from \eqref{eq:struc10}. Therefore we can conclude that,  for $\delta_3 = \tfrac{2}{3} \delta_2$, it holds that
  \[
  \int_{U_i } \varepsilon^{q} e^{-(F(y;G_{k+1})-F(y;S_k))/\varepsilon} e^{-F(y)/\varepsilon} dy
  \leq
  C\varepsilon^{q} e^{-\frac{\delta_3}{\eps}} e^{-F(G_{k+1})/\varepsilon}.
  \]
  Consequently, the component $U_i$ satisfying $U_i \cap S_k^\varepsilon  \neq \emptyset$ does not contribute either. The proof is hence completed by \cref{eq:lead1} and by the fact that the integral over the remaining components are neglible whenever  $\varepsilon$ is small enough.
\end{proof}

\section*{Acknowledgments}
B.A. was supported by the Swedish Research Council dnr: 2019-04098.
V.J. was supported by the Academy of Finland grant 314227. We would like to thank Aapo Kauranen and Toni Ikonen for helpful discussion on geometric function theory and for letting us know   the references   \cite{G,Z}.

\end{document}